\newtheorem{theorem}{Theorem}[section]
\newtheorem{proposition}[theorem]{Proposition}
\newtheorem{lemma}[theorem]{Lemma}
\newtheorem{corollary}[theorem]{Corollary}
\theoremstyle{definition}
\newtheorem{definition}[theorem]{Definition}
\theoremstyle{remark}
\newtheorem{remark}[theorem]{Remark}
\numberwithin{equation}{section}
\begin{document}

\title [{{THE QUASI-ZARISKI TOPOLOGY ON THE GRADED QUASI-PRIMARY SPECTRUM}}]{THE QUASI-ZARISKI TOPOLOGY ON THE GRADED QUASI-PRIMARY SPECTRUM OF A
GRADED MODULE OVER A GRADED COMMUTATIVE RING }

 \author[{{M. Jaradat and K. Al-Zoubi,  }}]{\textit{ Malik Jaradat and Khaldoun Al-Zoubi}*}

\address
{\textit{ Malik Jaradat, Department of Mathematics, The international School of Choueifat (MHS-AlDaid),
P.O.Box 66973, Alain, UAE}}
\bigskip
{\email{\textit{malikjaradat84@yahoo.com}}}

\address
{\textit{Khaldoun Al-Zoubi, Department of Mathematics and
Statistics, Jordan University of Science and Technology, P.O.Box
3030, Irbid 22110, Jordan.}}
\bigskip
{\email{\textit{kfzoubi@just.edu.jo}}}

 \subjclass[2010]{13A02, 16W50.}

\date{}
\begin{abstract}
 Let $G$ be a group with identity $e$. Let $R$ be a $G$-graded commutative
ring and $M$ a graded $R$-module. A proper graded submodule $Q$ of $M$ is
called a graded quasi-primary submodule if whenever $r\in h(R)$ and $m\in
h(M)$ with $rm\in Q$, then either $r\in Gr((Q:_{R}M))$ or $m\in Gr_{M}(Q)$.
The graded quasi primary spectrum $qp.Spec_{g}(M)$ is defined to be the set
of all graded quasi primary submodules of $M$. In this paper, we introduce
and study a topology on $qp.Spec_{g}(M)$, called the Quasi-Zariski Topology,
and investigate properties of this topology and some conditions under which (%
$qp.Spec_{g}(M)$, $q.\tau ^{g}$) is a Noetherian, spctral space.
\end{abstract}

\keywords{ Graded quasi-Zariski topology, graded quasi-primary spectrum, graded quasi-primary submodule. \\
$*$ Corresponding author}
 \maketitle


 \section{Introduction}

 The Zariski topology on the prime spectrum of a module over a commutative
ring have been already studied in \cite{11}. Also, the authors in \cite{20} introduced a topology on the set of all quasi-primary submodules satisfying the primeful property. The Zariski topology on the graded spectrum of graded ring in \cite{17, 18, 19} is generalized in
different ways on the graded spectrum of graded modules over graded
commutative rings as in \cite{3, 5, 9, 10, 17}. Recently, Al-Zoubi and Alkhalaf in \cite {2} introduced the concept of graded quasi-primary submodules of graded modules over graded commutative rings. Therefore these results will
be used in order to obtain the results of this paper.

Our main purpose is to study some new classes of graded modules and endow
these classes of graded submodules with the quasi-Zariski topology. In the
present work, we study the graded quasi primary submodules spectrum equipped
with the quasi-Zariski topology of a given graded $R$-module $M$, denoted by
($qp.Spec_{g}(M)$, $q.\tau ^{g}$).\newline
In section 2, we give some basic properties of graded rings and graded modules
which will be used in next sections.\newline
In section 3, for a graded $R$-module $M$, we introduce the map $\varphi
=:qp.Spec_{g}(M)\longrightarrow Spec_{g}(\overline{R})$\ where $\overline{R}%
=R/Ann(M)$, given by $\varphi (Q)=\overline{Gr((Q:_{R}M))}=\overline{%
(Gr_{M}(Q):_{R}M)}$\ for every $Q\in $\ $qp.Spec_{g}(M)$ and we investigate
some conditions under which is injective, surjective, open, and closed (see
Theorem \ref{T3.7} and Theorem \ref{T3.12}). Also, we find a base for whose
elements are quasi-compact (see Theorem \ref{T3.16}).\newline
In section 4, we introduce a quasi-Zariski topology conditions on the graded
quasi-primary submodule spectrum of a graded module such as connectedness
(see Theorem \ref{T4.1}), some equivalent conditions for ($qp.Spec_{g}(M)$, $%
q.\tau ^{g}$) to be $T_{1}$-space, irreducibility (see Theorem \ref{T4.6},
Theorem \ref{T4.7}, Theorem \ref{T4.9}, Corollary \ref{C4.10}, and Theorem %
\ref{T4.11}) and Noetheriannes (see Theorem \ref{T4.13}). Also, we study
this topology space from the point of view of spectral spaces (see Theorem %
\ref{T4.15}).
  \section{Preliminaries}

Throughout this paper all rings are commutative with identity and all
modules are unitary. First, we recall some basic properties of graded rings
which will be used in the sequel. We refer to \cite{13}{\small , }\cite{14}
and \cite{15} for these basic properties and more information on graded
rings.

 Let $G$ be a group with identity $e$. A ring $R$ is called graded, or more
precisely $G$-graded, be a if there exist a family of subgroups $\{R_{g}\}$
of $R$ such that $R=\oplus _{g\in G}R_{g}$ as abelian groups, indexed by the
elements $g\in G$, and $R_{g}R_{h}\subseteq R_{gh}$ for all $g,h\in G$. The
summands $R_{g}$ are called homogeneous components, and elements of these
summands are called homogeneous elements. If $r\in R$, then $r$ can be
written uniquely $r=\sum_{g\in G}r_{g}$, where $r_{g}$ is the component of $%
r$ in $R_{g}$. Also, $h(R)=\cup _{g\in G}R_{g}$. Let $R=\oplus _{g\in
G}R_{g} $ be a $G$-graded ring. An ideal $I$\ of $R$ is said to be a graded
ideal if $I=\oplus _{g\in G}(I\cap R_{g}):=\oplus _{g\in G}I_{g}$, where an
ideal of a graded ring need not be graded.\newline
Let $R$ be a $G$-graded ring. A proper graded ideal $I$ of $R$ is said to be
a graded prime ideal if whenever $rs\in I$, we have $r\in I$ or $s\in I$,
where $r,s\in h(R)$ (see \cite{18}). The graded radical of $I$, denoted by $%
Gr(I)$, is the set of all $r=\sum_{g\in G}r_{g}\in R$ such that for each $%
g\in G$ there exists $n_{g}\in
\mathbb{N}
$ with $r_{g}^{n_{g}}\in I$. Note that, if $r$ is a homogeneous element,
then $r\in Gr(I)$ if and only if $r^{n}\in I$ for some $n\in
\mathbb{N}
$ (see \cite{18}). It is shown in {\small \cite{18}} that $Gr(I)$ is the
intersection of all graded prime ideals of $R$ containing $I$. \newline

Let $Spec_{g}(R)$ denote the set of all graded prime ideals of $R$. For each
graded ideal $I$ of $R$, the graded variety of $I$ is the set $%
V_{R}^{g}(I)=\{P\in Spec_{g}(R)|I\subseteq P\}$. Then the set $\xi
^{g}=(R)=\{V_{R}^{g}(I)|I$ is a graded ideal of $R\}$\ satisfies the axioms
for the closed sets of a topology on $Spec_{g}(R)$, called the Zariski
topology on $Spec_{g}(R)$, denoted by ($Spec_{g}(R)$, $\tau _{R}^{g}$) (see
\cite{18, 19}).\newline
A graded ideal $I$ of $R$ is said to be a graded maximal ideal of $R$ if $%
I\neq R$ and if $J$ is a graded ideal of $R$, such that $I\subseteq
J\subseteq R$, then $I=J$ or $J=R$. Let $Max_{g}(R)$ denote the set of all
graded maximal ideals of $R$.\newline
Let $R$ be a $G$-graded ring. A proper graded ideal $I$ of $R$ is said to be
a graded quasi primary ideal if $ab\in I$ for $a$, $b$ $\in h(R)$ implies $%
a\in Gr(I)$ or $b\in Gr(I)$. Equivalently, $q$ is a graded quasi-primary
ideal of $R$ if and only if $Gr(q)$ is a graded prime ideal of $R$, (see \cite{2}). We let $%
qp.Spec_{g}(R)$ denote the set of all graded quasi-primary ideals of $R$. It
is clear that every graded prime ideal is a graded quasi-primary ideal,
i.e., $Spec_{g}(R)\subseteq qp.Spec_{g}(R)$. For a graded ideal $I$ of $R$,
the set of all graded quasi-primary ideals of $R$ containing $I$ is denoted
by $qp$-$V_{R}^{g}(I)$, i.e., $qp$-$V_{R}^{g}(I)=\{q\in
qp.Spec_{g}(R)|I\subseteq q\}$.\newline
In \cite{5, 17}, the authors are defined another variety defined for a
graded submodule $K$ of a graded $R$-module $M$. They define the variety of $%
K$ to be $V_{G}(K)=\{P\in Spec_{g}(M)\mid (P:_{R}M)\supseteq (K:_{R}M)\}$.
Then the set $\eta ^{g}(M)=\{V_{G}(K)|K$ is a graded submodule of $M\}$
contains the empty set and $Spec_{g}(M)$, satisfies the axioms for the
closed sets of a topology on $Spec_{g}(M)$, called the Zariski topology on $%
Spec_{g}(M)$, denoted by ($Spec_{g}(M)$, $\tau ^{g}$), i.e., ($Spec_{g}(M)$,
$\tau ^{g}$). The map $\psi $: $Spec_{g}(M)\longrightarrow Spec_{g}(%
\overline{R})$\ where $\overline{R}=R/Ann(M)$, defined by $\psi (P)=%
\overline{(P:_{R}M)}$\ for every $P\in $\ $Spec_{g}(M)$\ will be called the
natural map of $Spec_{g}(M)$.

\bigskip

Let $K$ be a graded submodule of a graded $R$-module $M$. We say that $K$
satisfies the graded primeful property if for each graded prime ideal $p$ of $R$
with $(K:_{R}M)\subseteq p$, there exists a graded prime submodule $P$ of $M$
containing $K$ such that $(P:_{R}M)=p$. A graded $R$-module $M$ is called
graded primeful, if either $M=(0)$ or $M\neq (0)$ and the zero graded
submodule of $M$ satisfies the graded primeful property (see \cite{1}).

Let $R$ be a $G$-graded ring and $M$ a graded $R$-module. A proper graded
submodule $Q$ of $M$ is said to be a graded quasi-primary submodule if
whenever $r\in h(R)$ and $m\in h(M)$ with $rm\in Q$, then either $r\in
Gr((Q:_{R}M))$ or $m\in Gr_{M}(Q)$ (see \cite{2}). Let $%
qp.Spec_{g}(M)$ denote the set of all graded quasi-primary submodules of $M$
satisfying the primeful property. Also if $p$ is a graded prime ideal of $R$%
, we let $qp.Spec_{g}^{p}(M)=\{Q\in qp.Spec_{g}(M)\mid Gr((Q:_{R}M))=p\}$.%
\newline
\newline
In the rest of this paper, for a graded $R$-module $M$ and for a graded
ideal $I$ of $R$, $\overline{R}=R/Ann(M)$ and $\overline{I}=I/R$ will denote
and respectively.

\section{Quasi-Zariski Topology on $qp.Spec_{g}(M)$}
In this section, we introduce the quasi-Zariski topology over the
spectrum of all graded quasi-primary submodules of a graded module $%
qp.Spec_{g}(M)$ and then investigate relationships between $qp.Spec_{g}(M)$
and $Spec_{g}(\overline{R})$.\newline
For any graded submodule $K$ of a graded $R$-module $M$, the variety for $K$%
, denoted by $qp$-$V_{M}^{g}(K),$ is the set of all graded quasi-primary
submodules $Q$ of $M$ satisfying the primeful property, such that $%
Gr((Q:_{R}M))\supseteq Gr((K:_{R}M))$.

The following Theorem shows that this variety satisfies the topology axioms
for closed sets.

\begin{theorem}
\label{T3.1}Let $M$ be a graded $R$-module. Then for graded submodules $K$, $%
N$ and $\{K_{i}\mid i\in I\}$ of $M$ we have:\newline
(1) $qp$-$V_{M}^{g}(0)=qp.Spec_{g}(M)$ and $qp$-$V_{M}^{g}(M)=\phi $.\newline
(2) $\cap _{i\in I}qp$-$V_{M}^{g}(K_{i})=qp$-$V_{M}^{g}((\sum_{i\in
I}(K_{i}:_{R}M))M)$.\newline
(3) $qp$-$V_{M}^{g}(N)\cup qp$-$V_{M}^{g}(K)=qp$-$V_{M}^{g}(N\cap K)$.
\end{theorem}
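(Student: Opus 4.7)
The plan is to verify each of the three closure axioms in turn, using the description of $Gr((Q:_{R}M))$ as a graded prime ideal (which comes from the characterisation of graded quasi-primary submodules in \cite{2}) together with the primeful property that every $Q \in qp.Spec_{g}(M)$ satisfies.

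For part (1), I would argue that since $Ann(M)\cdot M = 0 \subseteq Q$, we have $Ann(M) \subseteq (Q:_{R}M)$ for every $Q\in qp.Spec_{g}(M)$, so $Gr((0:_{R}M))=Gr(Ann(M)) \subseteq Gr((Q:_{R}M))$; hence every such $Q$ lies in $qp\text{-}V_{M}^{g}(0)$. For the second equality I would note that $Gr((M:_{R}M))=Gr(R)=R$, so membership in $qp\text{-}V_{M}^{g}(M)$ would force $(Q:_{R}M)=R$, contradicting properness of $Q$.

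For part (3), the key observation is the ideal-theoretic identity $((N\cap K):_{R}M) = (N:_{R}M)\cap (K:_{R}M)$, together with the fact that the graded radical of an intersection equals the intersection of the graded radicals; this reduces the claim to the statement that $Gr((Q:_{R}M))\supseteq Gr((N:_{R}M))\cap Gr((K:_{R}M))$ if and only if it contains one of the two factors. This in turn is immediate once we recall from \cite{2} that $Gr((Q:_{R}M))$ is a graded prime ideal of $R$ whenever $Q$ is graded quasi-primary, since graded prime ideals enjoy the usual ``contains an intersection iff contains a factor'' property.

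Part (2) is where I expect the main work. The inclusion $qp\text{-}V_{M}^{g}\bigl((\sum_{i}(K_{i}:_{R}M))M\bigr) \subseteq \cap_{i}\, qp\text{-}V_{M}^{g}(K_{i})$ is routine: for each $j$, $(K_{j}:_{R}M)\subseteq \sum_{i}(K_{i}:_{R}M) \subseteq ((\sum_{i}(K_{i}:_{R}M))M:_{R}M)$, and passing to graded radicals preserves this chain. The reverse inclusion is the main obstacle and is exactly where the primeful hypothesis baked into $qp.Spec_{g}(M)$ is used. Given $Q\in\cap_{i}\, qp\text{-}V_{M}^{g}(K_{i})$ and setting $P:=Gr((Q:_{R}M))$ (a graded prime ideal), I would deduce $\sum_{i}(K_{i}:_{R}M)\subseteq P$ by summing the inclusions $Gr((K_{i}:_{R}M))\subseteq P$. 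Since $Q$ satisfies the graded primeful property and $P\supseteq (Q:_{R}M)$, there exists a graded prime submodule $P'$ of $M$ with $Q\subseteq P'$ and $(P':_{R}M)=P$. Then $\bigl(\sum_{i}(K_{i}:_{R}M)\bigr)M\subseteq PM\subseteq P'$, so $\bigl((\sum_{i}(K_{i}:_{R}M))M :_{R}M\bigr)\subseteq (P':_{R}M)=P=Gr((Q:_{R}M))$, which (after applying $Gr$) places $Q$ in $qp\text{-}V_{M}^{g}((\sum_{i}(K_{i}:_{R}M))M)$ and completes the proof.
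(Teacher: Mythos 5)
Your proposal is correct, and parts (1) and (3) follow essentially the same path as the paper: part (1) is immediate from $Gr((M:_{R}M))=R$ and the properness of $(Q:_{R}M)$, and part (3) rests on $Gr(I\cap J)=Gr(I)\cap Gr(J)$ together with the fact that $Gr((Q:_{R}M))$ is a graded prime ideal (so containing the intersection forces containing a factor). Where you genuinely diverge is in the hard inclusion of part (2). The paper works with the graded radical submodule $Gr_{M}(Q)$: it passes from $Gr((Q:_{R}M))\supseteq \sum_{i}(K_{i}:_{R}M)$ to $(Gr((Q:_{R}M))M:_{R}M)\supseteq ((\sum_{i}(K_{i}:_{R}M))M:_{R}M)$, then collapses the left side using the identity $((Gr_{M}(Q):_{R}M)M:_{R}M)=(Gr_{M}(Q):_{R}M)$ and the cited equality $(Gr_{M}(Q):_{R}M)=Gr((Q:_{R}M))$ from \cite[Theorem 3.5]{1}. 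You instead invoke the graded primeful property directly: since $P=Gr((Q:_{R}M))$ is a graded prime ideal containing $(Q:_{R}M)$, there is a graded prime submodule $P'$ with $Q\subseteq P'$ and $(P':_{R}M)=P$, and the chain $(\sum_{i}(K_{i}:_{R}M))M\subseteq PM\subseteq P'$ gives $((\sum_{i}(K_{i}:_{R}M))M:_{R}M)\subseteq P$ at once. The two arguments rest on the same underlying facts (the primeful hypothesis is what makes the paper's cited Theorem 3.5 work), but yours is more self-contained and makes explicit exactly where the primeful property built into $qp.Spec_{g}(M)$ is being used, whereas the paper's version routes everything through the auxiliary object $Gr_{M}(Q)$ and an external citation. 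Both are valid; there is no gap.
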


\begin{proof}
(1) $qp$-$V_{M}^{g}(0)=\{Q\in qp.Spec_{g}(M)\mid Gr((Q:_{R}M))\supseteq
Gr((0:_{R}M))\}=qp.Spec_{g}(M)$.\newline
$qp$-$V_{M}^{g}(M)=\{Q\in qp.Spec_{g}(M)\mid Gr((Q:_{R}M))\supseteq
Gr((M:_{R}M))\}=\phi $.\newline
(2) Let $Q\in \cap _{i\in I}qp$-$V_{M}^{g}(K_{i})$, it is easily verified
that $((Gr_{M}(Q):_{R}M)M:_{R}M)=(Gr_{M}(Q):_{R}M)$, thus we have%
\begin{eqnarray*}
&\Rightarrow &Gr((Q:_{R}M))\supseteq Gr((K_{i}:_{R}M))\text{, for all }i\in
I. \\
&\Rightarrow &Gr((Q:_{R}M))\supseteq (K_{i}:_{R}M)\text{, for all }i\in I%
\text{.} \\
&\Rightarrow &Gr((Q:_{R}M))\supseteq \sum\nolimits_{i\in I}(K_{i}:_{R}M)%
\text{.} \\
&\Rightarrow &Gr((Q:_{R}M))M\supseteq (\sum\nolimits_{i\in I}(K_{i}:_{R}M))M%
\text{.} \\
&\Rightarrow &(Gr((Q:_{R}M))M:_{R}M)\supseteq ((\sum\nolimits_{i\in
I}(K_{i}:_{R}M))M:_{R}M)\text{.} \\
&\Rightarrow &((Gr_{M}(Q):_{R}M)M:_{R}M)\supseteq ((\sum\nolimits_{i\in
I}(K_{i}:_{R}M))M:_{R}M)\text{. (By \cite[Theorem 3.5]{1}).} \\
&\Rightarrow &(Gr_{M}(Q):_{R}M)\supseteq ((\sum\nolimits_{i\in
I}(K_{i}:_{R}M))M:_{R}M)\text{.} \\
&\Rightarrow &Gr((Q:_{R}M))\supseteq Gr(((\sum\nolimits_{i\in
I}(K_{i}:_{R}M))M:_{R}M))\text{. (By \cite[Theorem 3.5]{1} and \cite[Lemma
2.3]{2}).} \\
&\Rightarrow &Q\in qp\text{-}V_{M}^{g}((\sum\nolimits_{i\in
I}(K_{i}:_{R}M))M)\text{.}
\end{eqnarray*}%
For the reverse inclusion let $Q\in qp$-$V_{M}^{g}((\sum_{i\in
I}(K_{i}:_{R}M))M)$%
\begin{eqnarray*}
&\Rightarrow &Gr((Q:_{R}M))\supseteq Gr(((\sum\nolimits_{i\in
I}(K_{i}:_{R}M))M:_{R}M))\text{.} \\
&\Rightarrow &Gr((Q:_{R}M))\supseteq ((\sum\nolimits_{i\in
I}(K_{i}:_{R}M))M:_{R}M)\text{.} \\
&\Rightarrow &Gr((Q:_{R}M))\supseteq ((K_{i}:_{R}M))M:_{R}M))\text{, for all
}i\in I\text{.} \\
&\Rightarrow &Gr((Q:_{R}M))\supseteq (K_{i}:_{R}M)\text{, for all }i\in I%
\text{.} \\
&\Rightarrow &Gr((Q:_{R}M))\supseteq Gr((K_{i}:_{R}M))\text{, for all }i\in I%
\text{. (By \cite[Lemma 2.3]{2}).} \\
&\Rightarrow &Q\in \cap _{i\in I}qp\text{-}V_{M}^{g}(K_{i})\text{.}
\end{eqnarray*}%
(3) From the fact that if $N\subseteq K$, for a graded submodule $N$ and $K$
of $M$, then $qp$-$V_{M}^{g}(N)\supseteq qp$-$V_{M}^{g}(K)$, we have $qp$-$%
V_{M}^{g}(N)\cup qp$-$V_{M}^{g}(K)\subseteq qp$-$V_{M}^{g}(N\cap K)$. To see
the reverse inclusion, let $Q\in qp$-$V_{M}^{g}(N\cap K)$,
\begin{eqnarray*}
&\Rightarrow &Gr((Q:_{R}M))\supseteq Gr((N\cap K:_{R}M))\text{.} \\
&\Rightarrow &Gr((Q:_{R}M))\supseteq Gr((N:_{R}M)\cap (K:_{R}M))\text{.} \\
&\Rightarrow &Gr((Q:_{R}M))\supseteq Gr((N:_{R}M))\cap Gr((K:_{R}M))\text{.
(By \cite[Lemma 2.6]{3}\textbf{)}.} \\
&\Rightarrow &Gr((Q:_{R}M))\supseteq Gr((N:_{R}M))\text{ or }%
Gr((Q:_{R}M))\supseteq Gr((K:_{R}M))\text{. (By \cite[Lemma 2.3]{2}\textbf{)}%
.} \\
&\Rightarrow &Q\in qp\text{-}V_{M}^{g}(N)\cup qp\text{-}V_{M}^{g}(K)\text{.}
\end{eqnarray*}
\end{proof}

Now we state the definition of the quasi-Zariski topology over the spectrum
of all graded quasi-primary submodules of a graded $R$-module $M$.

\begin{definition}
Let $M$ be a graded $R$-module. If $qp$-$\eta (M)$ denotes the collection of
all subsets $qp$-$V_{M}^{g}(K)$ of $qp.Spec_{g}(M)$, then $qp$-$\eta (M)$
satisfies the axioms for the closed subsets of a topological space on $%
qp.Spec_{g}(M)$. So, there exists a topology on $qp.Spec_{g}(M)$ called
quasi-Zariski topology, denoted by ($qp.Spec_{g}(M)$, $q.\tau ^{g}$).
\end{definition}

Recall that a graded $R$-module $M$ over $G$-graded ring $R$ is said to be a
graded multiplication module if for every graded submodule $K$ of $M$ there
exists a graded ideal $I$ of $R$ such that $K=IM$. It is clear that $M$ is
graded multiplication $R$-module if and only if $K=(K:_{R}M)M$ for every
graded submodule $K$ of $M$ (see \cite{6}.)\newline
Let $Y$ be a subset of $qp.Spec_{g}(M)$ for a graded $R$-module $M$. We will
denote the intersection of all elements in $Y$ by $\Im (Y)$. In the
following Proposition, we gather some basic facts about the varieties.

\begin{proposition}
\label{P3.2}Let $M$ be a graded $R$-module. Let $N$, $K$ and $\{K_{i}\mid
i\in I\}$ of $M$. Then the following hold:\newline
(1) If $N\subseteq K$, then $qp$-$V_{M}^{g}(K)\subseteq qp$-$V_{M}^{g}(N)$.%
\newline
(2) $qp$-$V_{M}^{g}(Gr_{M}(K))\subseteq qp$-$V_{M}^{g}(K)$ and equality
holds if $M$ is graded multiplication.\newline
(3) $qp$-$V_{M}^{g}(K)=qp$-$V_{M}^{g}(Gr((K:_{R}M))M)$.\newline
(4) If $Gr(N:_{R}M)=Gr((K:_{R}M))$, then $qp$-$V_{M}^{g}(N)=qp$-$%
V_{M}^{g}(K) $. The converse is also true if both $N$, $K\in qp.Spec_{g}(M)$.%
\newline
(5) $qp$-$V_{M}^{g}(K)=\bigcup\limits_{(K:_{R}M)\subseteq p\in
Spec_{g}(R)}qp.Spec_{g}^{p}(M)$.\newline
(6) Let $Y$ be a subset of $qp.Spec_{g}(M)$. Then $Y\subseteq qp$-$%
V_{M}^{g}(K)$ if and only if $Gr(K:_{R}M)\subseteq Gr(\Im (Y):_{R}M)$.
\end{proposition}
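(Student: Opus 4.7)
The plan is to unfold the definition
$qp$-$V_{M}^{g}(K)=\{Q\in qp.Spec_{g}(M)\mid Gr((Q:_{R}M))\supseteq Gr((K:_{R}M))\}$
and lean on three ingredients already in force: Lemma~2.3 of~\cite{2} (monotonicity and idempotence of $Gr$, and the fact that $Gr((Q:_{R}M))$ is a graded prime ideal whenever $Q\in qp.Spec_{g}(M)$), Theorem~3.5 of~\cite{1} (the identity $Gr((L:_{R}M))=(Gr_{M}(L):_{R}M)$), and the defining relation $K=(K:_{R}M)M$ for graded multiplication modules.

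Parts~(1)--(4) are largely bookkeeping. For~(1), monotonicity of $(\,\cdot\,:_{R}M)$ and of $Gr$ gives $Gr((N:_{R}M))\subseteq Gr((K:_{R}M))$, from which the variety inclusion is immediate. For~(2), the inclusion $K\subseteq Gr_{M}(K)$ combined with~(1) yields one containment; for the converse in the graded multiplication case I would rewrite $Gr_{M}(K)=(Gr_{M}(K):_{R}M)M$ and apply Theorem~3.5 of~\cite{1} to obtain $Gr((Gr_{M}(K):_{R}M))=Gr((K:_{R}M))$, collapsing the two defining radicals. Part~(3) is the identity $Gr((Gr((K:_{R}M))M:_{R}M))=Gr((K:_{R}M))$, again produced by Theorem~3.5 of~\cite{1} together with Lemma~2.3 of~\cite{2}. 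Part~(4) is direct from the definition in one direction, while the converse under $N,K\in qp.Spec_{g}(M)$ follows by evaluating the coincident varieties at $N$ and $K$ themselves to pin down the two radical colons.

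For~(5) the key input is that $p:=Gr((Q:_{R}M))$ is a graded prime ideal of $R$ whenever $Q\in qp.Spec_{g}(M)$. Then $Q\in qp$-$V_{M}^{g}(K)$ iff $p\supseteq Gr((K:_{R}M))$ iff $p\supseteq (K:_{R}M)$ (since $p$ is its own radical). Partitioning $qp$-$V_{M}^{g}(K)$ by the value of $p$ produces the claimed union over graded primes of $R$ containing $(K:_{R}M)$.

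For~(6) the backward direction is direct: if $Gr((K:_{R}M))\subseteq Gr((\Im(Y):_{R}M))$ and $Q\in Y$, then $\Im(Y)\subseteq Q$ forces $Gr((\Im(Y):_{R}M))\subseteq Gr((Q:_{R}M))$, and the chain $Gr((K:_{R}M))\subseteq Gr((\Im(Y):_{R}M))\subseteq Gr((Q:_{R}M))$ places $Q$ in $qp$-$V_{M}^{g}(K)$. The forward direction is the main obstacle. The immediate bound $(K:_{R}M)\subseteq\bigcap_{Q\in Y}Gr((Q:_{R}M))$ does not translate directly into an inclusion into $Gr((\Im(Y):_{R}M))$, since $Gr$ need not commute with infinite intersections of graded ideals. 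My plan is to recast the goal as: every graded prime $p$ containing $(\Im(Y):_{R}M)$ also contains $(K:_{R}M)$. The graded primeful property built into membership in $qp.Spec_{g}(M)$ should then let one lift such a $p$ to a graded prime (hence graded quasi-primary) submodule $P$ with $(P:_{R}M)=p$ sitting above a suitable $Q\in Y$; together with the hypothesis $Y\subseteq qp$-$V_{M}^{g}(K)$ and part~(5), this yields $p=Gr((P:_{R}M))\supseteq Gr((K:_{R}M))$. Making the selection of a $Q\in Y$ with $(Q:_{R}M)\subseteq p$ rigorous is the delicate step, and I expect the argument to rely essentially on the primeful hypothesis.
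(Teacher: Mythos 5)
Parts (1)--(5) of your proposal are correct, and they track the paper, which in fact offers almost nothing here: it proves (3) by the same radical computation you describe and dismisses (1), (4), (5), (6) as ``clearly true by definitions.'' Your reduction in (5) -- that $Gr((Q:_{R}M))$ is a graded prime, hence its own radical, so containment of $Gr((K:_{R}M))$ is equivalent to containment of $(K:_{R}M)$ -- is exactly the right argument. In (2) you derive $Gr_{M}(K)=Gr((K:_{R}M))M$ from the multiplication hypothesis together with Theorem~3.5 of \cite{1}, where the paper quotes this identity directly from \cite[Theorem 9]{16}; the difference is cosmetic.

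The genuine issue is the forward direction of (6), and you have diagnosed it correctly: the hypothesis gives $Gr((K:_{R}M))\subseteq\bigcap_{Q\in Y}Gr((Q:_{R}M))$, while the conclusion asks for containment in $Gr\bigl(\bigcap_{Q\in Y}(Q:_{R}M)\bigr)$, and the graded radical does not commute with infinite intersections. But the repair you sketch cannot be completed: for a graded prime $p\supseteq(\Im(Y):_{R}M)=\bigcap_{Q\in Y}(Q:_{R}M)$ there need not exist any $Q\in Y$ with $(Q:_{R}M)\subseteq p$, since a prime containing an infinite intersection of ideals need not contain any single one of them; the primeful property lifts a prime over a \emph{given} $(Q:_{R}M)$, which is of no help in selecting $Q$. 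In fact the step does not merely resist rigor -- the implication is false for infinite $Y$ as stated. With the trivial grading, take $R=M=k[x,y]$, $Q_{n}=(x^{n},y)$ and $Y=\{Q_{n}\mid n\geq1\}$: each $Q_{n}$ is quasi-primary with $Gr(Q_{n})=(x,y)$ and satisfies the primeful property, so $Y\subseteq qp$-$V_{M}^{g}(K)$ for $K=(x)$, yet $\Im(Y)=(y)$ and $Gr((K:_{R}M))=(x)\not\subseteq(y)=Gr((\Im(Y):_{R}M))$. For finite $Y$ the claim does hold, via $Gr(I\cap J)=Gr(I)\cap Gr(J)$ as in \cite[Lemma 2.6]{3}. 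Since the paper's own proof of (6) is the phrase ``clearly true by definitions,'' there is nothing there to reconcile this; if you want a usable statement you should either restrict $Y$ to be finite, or replace the right-hand side by $Gr((K:_{R}M))\subseteq\bigcap_{Q\in Y}Gr((Q:_{R}M))$, which is the form actually needed in the later applications (Theorems \ref{T4.7} and \ref{T4.9}).
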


\begin{proof}
(1) Is clear.\newline
(2) $qp$-$V_{M}^{g}(Gr_{M}(K))\subseteq qp$-$V_{M}^{g}(K)$ is clearly true
by (1). The equality can be deduced from the fact $Gr_{M}(K)=Gr((K:_{R}M))M$%
, where $K$\ is a graded submodule of a graded multiplication module\textbf{%
\ }$M$ \textbf{(}\cite[Theorem 9]{16}\textbf{).}\newline
(3) Let $K$ be a proper graded submodule of $M$. Then $Q\in qp$-$%
V_{M}^{g}(K) $,%
\begin{eqnarray*}
&\Rightarrow &Gr((Q:_{R}M))M\supseteq Gr((K:_{R}M))M\text{.} \\
&\Rightarrow &Gr_{M}(Q)\supseteq Gr((K:_{R}M))M\text{.} \\
&\Rightarrow &Gr((Q:_{R}M))\supseteq (Gr((K:_{R}M))M:_{R}M)\text{. (By \cite[%
Lemma 3.5]{1}).} \\
&\Rightarrow &Gr((Q:_{R}M))\supseteq Gr((Gr((K:_{R}M))M:_{R}M))\text{. (By
\cite[Lemma 2.3]{2}).} \\
&\Rightarrow &Q\in qp\text{-}V_{M}^{g}(Gr((K:_{R}M))M)\text{.}
\end{eqnarray*}%
Thus $qp$-$V_{M}^{g}(K)\subseteq qp$-$V_{M}^{g}(Gr((K:_{R}M))M)$. For the
reverse inclusion, we have $Q\in qp$-$V_{M}^{g}(Gr((K:_{R}M))M)$,%
\begin{eqnarray*}
&\Rightarrow &Gr((Q:_{R}M)\supseteq Gr(Gr((K:_{R}M))M:_{R}M)) \\
&\Rightarrow &Gr((Q:_{R}M))\supseteq (Gr((K:_{R}M))M:_{R}M)\text{.} \\
&\Rightarrow &Gr((Q:_{R}M))\supseteq Gr((K:_{R}M)\text{.} \\
&\Rightarrow &Q\in qp\text{-}V_{M}^{g}(K)\text{.}
\end{eqnarray*}%
Finally, (4), (5) and (6) are clearly true by definitions.
\end{proof}

Now we introduce the following maps.

\begin{remark}
\label{R3.3}Let $M$ be a graded $R$-module.\newline
(1) The map $\phi ^{R}:qp.Spec_{g}(\overline{R})\rightarrow Spec_{g}(%
\overline{R})$ defined by $\phi ^{R}(\overline{q})=\overline{Gr(q)}$, is
well-defined.\newline
2. $\psi ^{q}:qp.Spec_{g}(M)\longrightarrow qp.Spec_{g}(\overline{R})$,
defined by $\psi ^{q}(Q)=$ $\overline{(Q:_{R}M)}$, is well-defined.
\end{remark}

\begin{proposition}
\label{P3.4}Let $M$ be a graded $R$-module. then we have the following
statements:\newline
(1) $(\phi ^{R})^{-1}(V_{\overline{R}}^{g}(\overline{I}))=qp$-$V_{R}^{g}(%
\overline{I})$ for every graded ideal $I$ of $R$ containing $Ann(M)$. In
particular, $\varphi ^{-1}(V_{\overline{R}}^{g}(\overline{I}))=(\phi
^{R}\circ \psi ^{q})^{-1}(V_{\overline{R}}^{g}(\overline{I}))=(\psi
^{q})^{-1}(qp$-$V_{R}^{g}(\overline{I}))$.\newline
(2) $\phi ^{R}(qp$-$V_{R}^{g}(\overline{I}))=V_{\overline{R}}^{g}(\overline{I%
})$ and $\phi ^{R}(qp.Spec_{g}(\overline{R})-qp$-$V_{R}^{g}(I))=Spec(%
\overline{R})-V_{\overline{R}}^{g}(\overline{I})$, i.e. $\phi ^{R}$ is both
closed and open.\newline
(3) $(\phi ^{M})^{-1}(V_{G}(K))=qp$-$V_{M}^{g}(K)$, for every graded
submodule $K$ of $M$,and therefore the map $\phi ^{M}$ is continuous.
\end{proposition}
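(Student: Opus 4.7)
The plan is to prove each part by unfolding the definitions of $\phi^{R}$, $\psi^{q}$, $\varphi$, and $\phi^{M}$, together with those of the quasi-varieties $qp$-$V_{M}^{g}$, $qp$-$V_{R}^{g}$, and the classical variety $V^{g}$, then exploiting the order-preserving lattice bijection between graded ideals of $R$ containing $Ann(M)$ and graded ideals of $\overline{R}$. The recurring algebraic facts I will need are: the commutation $\overline{Gr(J)}=Gr(\overline{J})$ valid whenever $Ann(M)\subseteq J$, combined with idempotence of $Gr$; the fact that every graded prime ideal is its own graded radical, so $\phi^{R}(\overline{p})=\overline{p}$ for $\overline{p}\in Spec_{g}(\overline{R})$; and the identity $(Gr_{M}(Q):_{R}M)=Gr((Q:_{R}M))$ from \cite[Theorem 3.5]{1}, available because every element of $qp.Spec_{g}(M)$ satisfies the graded primeful property.

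For (1) I would write a short chain of equivalences: $\overline{q}\in (\phi^{R})^{-1}(V_{\overline{R}}^{g}(\overline{I}))$ iff $\phi^{R}(\overline{q})=\overline{Gr(q)}\supseteq\overline{I}$ iff, using $Ann(M)\subseteq I$, $Gr(q)\supseteq I$ iff $Gr(\overline{q})\supseteq\overline{I}$, which is the defining condition for $\overline{q}\in qp$-$V_{R}^{g}(\overline{I})$. The ``in particular'' assertion is then immediate from the factorization $\varphi=\phi^{R}\circ\psi^{q}$. For (2), the forward inclusion in the closed-map statement uses the chain $\overline{I}\subseteq \overline{q}\subseteq Gr(\overline{q})=\phi^{R}(\overline{q})$; the reverse inclusion exploits that every $\overline{p}\in V_{\overline{R}}^{g}(\overline{I})$ is already graded prime, hence quasi-primary, and satisfies $\phi^{R}(\overline{p})=\overline{p}$, so $\overline{p}\in \phi^{R}(qp$-$V_{R}^{g}(\overline{I}))$. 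A symmetric argument using non-containment handles the open-map assertion, so $\phi^{R}$ is simultaneously closed and open.

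For (3), I first identify $\phi^{M}$ as the module-level analog of $\phi^{R}$, namely $Q\mapsto Gr_{M}(Q)$ from $qp.Spec_{g}(M)$ into $Spec_{g}(M)$. Then $Q\in (\phi^{M})^{-1}(V_{G}(K))$ iff $(Gr_{M}(Q):_{R}M)\supseteq (K:_{R}M)$, and \cite[Theorem 3.5]{1} rewrites this as $Gr((Q:_{R}M))\supseteq (K:_{R}M)$; since the left-hand side is a graded radical ideal, this is equivalent to $Gr((Q:_{R}M))\supseteq Gr((K:_{R}M))$, i.e., $Q\in qp$-$V_{M}^{g}(K)$. Continuity of $\phi^{M}$ then follows because every basic closed set $V_{G}(K)$ pulls back to the closed set $qp$-$V_{M}^{g}(K)$. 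The main obstacle I anticipate is a notational one in (1): a literal reading of the earlier definition $qp$-$V_{R}^{g}(J)=\{q:J\subseteq q\}$ only yields one inclusion, since $J\subseteq Gr(q)$ does not force $J\subseteq q$; the identity forces the radical reading $qp$-$V_{R}^{g}(\overline{I})=\{\overline{q}:Gr(\overline{q})\supseteq \overline{I}\}$, which is in any case the one consistent with the module-level definition used throughout the section. Once this interpretation is adopted, each of the three parts reduces to a short manipulation with graded radicals under the containment $Ann(M)\subseteq I$.
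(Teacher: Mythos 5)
Your proof follows essentially the same route as the paper's: the same chain of equivalences for (1), the same surjectivity-of-$\phi^{R}$ argument (graded primes are quasi-primary and fixed by $\phi^{R}$) for the closed/open claims in (2), and the same reduction of (3) to the identity $(Gr_{M}(Q):_{R}M)=Gr((Q:_{R}M))$ from \cite[Theorem 3.5]{1}. Your observation that $qp$-$V_{R}^{g}(\overline{I})$ must be read via the graded radical, $\{\overline{q}: Gr(\overline{q})\supseteq\overline{I}\}$, rather than by literal containment as in the Preliminaries, is precisely the convention the paper's own proof silently adopts, so no correction is needed.
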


\begin{proof}
(1) Let $I$ be a graded ideal of $R$ containing $Ann(M)$. Then
\begin{eqnarray*}
\overline{q} &\in &(\phi ^{R})^{-1}(V_{R}^{g}(\overline{I})) \\
&\Leftrightarrow &\phi ^{R}(\overline{q})\in V_{R}^{g}(\overline{I}) \\
&\Leftrightarrow &\overline{Gr(q)}\supseteq \overline{I} \\
&\Leftrightarrow &Gr(q)\supseteq I \\
&\Leftrightarrow &q\in qp\text{-}V_{R}^{g}(\overline{I}).\newline
\end{eqnarray*}%
(2) As we have seen in (1), $\phi ^{R}$ is a continuous map such that $(\phi
^{R})^{-1}(V_{\overline{R}}^{g}(\overline{I}))=qp$-$V_{R}^{g}(I)$ for every
graded ideal $I$ of $R$ containing $Ann(M)$. It follows that $\phi ^{R}(qp$-$%
V_{R}^{g}(\overline{I}))=\phi ^{R}(((\phi ^{R})^{-1}(V_{\overline{R}}^{g}(%
\overline{I})))=V_{\overline{R}}^{g}(\overline{I})$ as $\phi ^{R}$ is
surjective. Similarly,%
\begin{eqnarray*}
\phi ^{R}(qp.Spec_{g}(\overline{R})-qp\text{-}V_{R}^{g}(\overline{I}))
&=&\phi ^{R}((\phi ^{R})^{-1}(Spec_{g}(\overline{R}))-(\phi ^{R})^{-1}(V_{%
\overline{R}}^{g}(\overline{I})))\text{.} \\
&=&\phi ^{R}((\phi ^{R})^{-1}(Spec_{g}(\overline{R})-V_{\overline{R}}^{g}(%
\overline{I}))\text{.} \\
&=&\phi ^{R}\circ (\phi ^{R})^{-1}(Spec_{g}(\overline{R})-V_{\overline{R}%
}^{g}(\overline{I})))\text{.} \\
&=&Spec_{g}(\overline{R})-V_{\overline{R}}^{g}(\overline{I})\text{.}
\end{eqnarray*}%
(3) Suppose $(\phi ^{M})^{-1}(V_{G}(K))$. Then $\phi ^{M}(Q)\in V_{G}(K)$
and so $p=(pM(p)):_{R}M)\supseteq (K:_{R}M)$, in which $p=Gr((Q:_{R}M))$.
Hence $Gr((Q:_{R}M))\supseteq Gr((K:_{R}M))$ and so $Q\in qp$-$V_{M}^{g}(K)$%
. The argument is reversible and so $\phi ^{M}$ is continuous.
\end{proof}

\bigskip

In the next Definition we provide the natural map of $qp.Spec_{g}(M)$.

\begin{definition}
\label{D3.5}Let $M$ be a graded $R$-module. The map $\varphi
:qp.Spec_{g}(M)\longrightarrow Spec_{g}(\overline{R})$\ where $\overline{R}%
=R/Ann(M)$, defined by $\varphi (Q)=\overline{(Gr_{M}(Q):_{R}M)}$\ for every
$Q\in $\ $qp.Spec_{g}(M)$\ will be called the natural map of $qp.Spec_{g}(M)$%
.\newline
\end{definition}
\begin{remark}
\label{R3.6}Let $M$ be a graded $R$-module.\newline
(1) For every $Q\in $\ $qp.Spec_{g}(M)$, $(Gr_{M}(Q):_{R}M)$ is a graded
prime of $R$ by \cite[Lemma 2.3]{2}, and $%
(Gr_{M}(Q):_{R}M)=Gr(Q:_{R}M)$ by \cite[Theorem 3.5]{1}. Thus $\varphi (Q)=%
\overline{(Gr_{M}(Q):_{R}M)}=\overline{Gr(Q:_{R}M)}\in \overline{R}$. \newline
 (2)It is clear to see that $\varphi =(\phi ^{R}\circ \psi
^{q}):qp.Spec_{g}(M)\longrightarrow Spec_{g}(\overline{R})$\ where $%
\overline{R}=R/Ann(M)$, defined by $\varphi (Q)=\overline{Gr((Q:_{R}M))}=%
\overline{(Gr_{M}(Q):_{R}M)}$\ for every $Q\in $\ $qp.Spec_{g}(M)$.
\end{remark}

\bigskip

The following theorem provides some important characterizations about the
quasi-Zariski topology over $qp.Spec_{g}(M)$.

\begin{theorem}
\label{T3.7}Let $M$ be a graded $R$-module and let $Q$, $P\in qp.Spec_{g}(M)$%
. Then the following statements are equivalent:\newline
(1) If $qp$-$V_{M}^{g}(Q)=qp$-$V_{M}^{g}(P)$, then $Q=P$.\newline
(2) $\mid qp.Spec_{g}^{p}(M)\mid \leq 1$ for every $p\in Spec_{g}(R)$.%
\newline
(3) $\varphi $ is injective.
\end{theorem}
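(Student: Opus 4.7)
The plan is to establish the chain $(1) \Leftrightarrow (2) \Leftrightarrow (3)$ by leveraging Proposition \ref{P3.2}(4), which says that for $Q,P \in qp.Spec_{g}(M)$, the equality $qp$-$V_{M}^{g}(Q)=qp$-$V_{M}^{g}(P)$ is equivalent to $Gr((Q:_{R}M))=Gr((P:_{R}M))$. This observation reduces the entire theorem to elementary statements about whether two distinct graded quasi-primary submodules can share the same graded radical of their colon ideal, combined with the fact (Remark \ref{R3.6}) that $Gr((Q:_{R}M))$ is a graded prime ideal of $R$.

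For $(1)\Rightarrow(2)$, I would take any two elements $Q,P \in qp.Spec_{g}^{p}(M)$ for a fixed graded prime $p$. By definition of $qp.Spec_{g}^{p}(M)$ we have $Gr((Q:_{R}M))=p=Gr((P:_{R}M))$, so Proposition \ref{P3.2}(4) forces $qp$-$V_{M}^{g}(Q)=qp$-$V_{M}^{g}(P)$, and then (1) gives $Q=P$. For the reverse $(2)\Rightarrow(1)$, assume $qp$-$V_{M}^{g}(Q)=qp$-$V_{M}^{g}(P)$; by Proposition \ref{P3.2}(4) the common ideal $p:=Gr((Q:_{R}M))=Gr((P:_{R}M))$ is graded prime by Remark \ref{R3.6}, so both $Q$ and $P$ lie in the fibre $qp.Spec_{g}^{p}(M)$, and (2) yields $Q=P$.

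For $(2)\Leftrightarrow(3)$, recall from Definition \ref{D3.5} that $\varphi (Q)=\overline{Gr((Q:_{R}M))}$ in $\overline{R}=R/Ann(M)$. Since $Ann(M)=(0:_{R}M)\subseteq (Q:_{R}M)\subseteq Gr((Q:_{R}M))$, the correspondence $I\mapsto \overline{I}$ is a bijection between graded ideals of $R$ containing $Ann(M)$ and graded ideals of $\overline{R}$. Hence $\varphi (Q)=\varphi (P)$ if and only if $Gr((Q:_{R}M))=Gr((P:_{R}M))$, which amounts to saying $Q$ and $P$ belong to the same fibre $qp.Spec_{g}^{p}(M)$. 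Thus $\varphi$ is injective precisely when each such fibre has at most one element, which is exactly (2).

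I do not expect any genuine obstacle. The only subtlety to keep in mind is ensuring $Ann(M)\subseteq Gr((Q:_{R}M))$ so that the passage to $\overline{R}$ respects the relevant ideals, and invoking the correct previously proven fact (Proposition \ref{P3.2}(4)) that requires both submodules to be graded quasi-primary—which is exactly our hypothesis. The bulk of the argument is essentially formal bookkeeping once Proposition \ref{P3.2}(4) and Remark \ref{R3.6} are in hand.
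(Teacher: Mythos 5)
Your proposal is correct and follows essentially the same route as the paper: both arguments reduce everything to the observation that $qp$-$V_{M}^{g}(Q)=qp$-$V_{M}^{g}(P)$, membership in a common fibre $qp.Spec_{g}^{p}(M)$, and $\varphi(Q)=\varphi(P)$ are all equivalent to $Gr((Q:_{R}M))=Gr((P:_{R}M))$. The only cosmetic difference is that the paper proves the cyclic chain $(1)\Rightarrow(2)\Rightarrow(3)\Rightarrow(1)$ while you prove the two biconditionals directly, citing Proposition \ref{P3.2}(4) and Remark \ref{R3.6} explicitly where the paper leaves these facts implicit.
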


\begin{proof}
(1)$\Rightarrow $(2) Suppose that $Q$, $P\in qp.Spec_{g}^{p}(M)$. Then $%
(Gr_{M}(Q):_{R}M)=(Gr_{M}(P):_{R}M)=p$, and so $qp$-$V_{M}^{g}(Q)=qp$-$%
V_{M}^{g}(P)$. Thus, by the assumption (1), $Q=P$.\newline
(2)$\Rightarrow $(3) Suppose that $Q$, $P\in qp.Spec_{g}(M)$ and $\varphi
(Q)=\varphi (P)$. Then $(Gr_{M}(Q):_{R}M)=(Gr_{M}(P):_{R}M)=p$, and so $Q$, $%
P\in qp.Spec_{g}^{p}(M)$. Thus the assumption (2) implies that $Q=P$.\newline
(3)$\Rightarrow $(1) It is clear.
\end{proof}

\bigskip

The following result is easy to verify.

\begin{corollary}
\label{C3.8}Let $M$ be a graded $R$-module. If $\mid qp.Spec_{g}^{p}(M)\mid
=1$ for every $p\in Spec_{g}(R)$, then $\varphi $ is a bijective map.
\end{corollary}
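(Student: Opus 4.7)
The plan is to obtain the two halves of bijectivity separately, leaning almost entirely on Theorem \ref{T3.7} for injectivity and on the definition of $qp.Spec_g^p(M)$ for surjectivity.

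For injectivity, I would note that the hypothesis $|qp.Spec_g^p(M)|=1$ in particular gives $|qp.Spec_g^p(M)|\leq 1$ for every $p\in Spec_g(R)$, which is exactly condition (2) of Theorem \ref{T3.7}. By the equivalence $(2)\Leftrightarrow (3)$ of that theorem, $\varphi$ is injective. Nothing more is needed here; this step is a direct invocation.

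For surjectivity, I would take an arbitrary element $\overline{p}\in Spec_g(\overline{R})$. By the standard correspondence between graded prime ideals of $\overline{R}=R/Ann(M)$ and graded prime ideals of $R$ containing $Ann(M)$, this $\overline{p}$ comes from a graded prime ideal $p\in Spec_g(R)$ with $Ann(M)\subseteq p$. The hypothesis $|qp.Spec_g^p(M)|=1$ then guarantees the existence of a (unique) $Q\in qp.Spec_g(M)$ with $Gr((Q:_RM))=p$. Using Remark \ref{R3.6}(1) to identify $\varphi(Q)=\overline{Gr((Q:_RM))}$, we obtain $\varphi(Q)=\overline{p}$, proving surjectivity.

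Combining the two steps yields that $\varphi$ is bijective. I do not anticipate a genuine obstacle, since this is essentially a packaging of Theorem \ref{T3.7} with the trivial observation that "exactly one" refines "at most one" by adding existence; the only mild subtlety is making sure the surjectivity argument is phrased in terms of primes of $\overline{R}$ via the quotient correspondence, so that the element $p$ produced by the hypothesis legitimately covers every element in the codomain $Spec_g(\overline{R})$.
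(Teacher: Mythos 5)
Your proposal is correct and follows essentially the same route as the paper, whose entire proof is the single line ``By Theorem \ref{T3.7}.'' You simply make explicit what the paper leaves implicit: injectivity is the $(2)\Leftrightarrow(3)$ equivalence of Theorem \ref{T3.7} applied to the weakened hypothesis $\lvert qp.Spec_{g}^{p}(M)\rvert\leq 1$, and surjectivity follows by pulling each $\overline{p}\in Spec_{g}(\overline{R})$ back to a graded prime $p\supseteq Ann(M)$ and using the existence half of $\lvert qp.Spec_{g}^{p}(M)\rvert=1$ together with Remark \ref{R3.6}(1).
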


\begin{proof}
By Theorem \ref{T3.7}.
\end{proof}

Now we generalize the definition of the graded primeful module to the graded
quasi-primaryful.

\begin{definition}
\label{D3.9}A graded $R$-module $M$ is called graded quasi-primaryful if
either $M=(0)$ or $M\neq (0)$ and for every $q\in qp$-$V_{R}^{g}(Ann(M))$,
there exists $Q\in qp.Spec_{g}(M)$ such that $Gr((Q:_{R}M))=Gr(q)$.
\end{definition}

\begin{remark}
\label{R3.10} Let $M$ be a graded $R$-module. Then $M$ is a graded
quasi-primaryful if and only if $\varphi $ is surjective.
\end{remark}

\bigskip

Recall that a function $\Phi $ between two topological spaces $X$ and $Y$ is
called a closed (open) map if for any closed (open) set $V$ in $X$, the
image $\Phi (V)$ is closed (open) in $Y$, (see \cite{12}\textbf{).}

\begin{theorem}
\label{T3.11}Let $M$ be a graded $R$-module. The natural map $\varphi =\phi
^{R}\circ \psi ^{q}$ is continuous with respect to the quasi-Zariski
topology; more precisely for every graded ideal $I$ of $R$ containing $%
Ann(M) $,%
\begin{equation*}
\varphi ^{-1}(V_{\overline{R}}^{g}(\overline{I}))=(\phi ^{R}\circ \psi
^{q})^{-1}(V_{\overline{R}}^{g}(\overline{I}))=(\psi ^{q})^{-1}(qp\text{-}%
V_{R}^{g}(\overline{I}))=qp-V_{M}^{g}(IM).
\end{equation*}
\end{theorem}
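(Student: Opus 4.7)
The plan is to split the four-term chain into three consecutive equalities and treat them in order; the first two are largely formal, while the third is the substantive step.

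For $\varphi^{-1}(V_{\overline{R}}^{g}(\overline{I})) = (\phi^{R}\circ\psi^{q})^{-1}(V_{\overline{R}}^{g}(\overline{I}))$, I would simply substitute the factorisation $\varphi = \phi^{R}\circ\psi^{q}$ recorded in Remark \ref{R3.6}(2). For $(\phi^{R}\circ\psi^{q})^{-1}(V_{\overline{R}}^{g}(\overline{I})) = (\psi^{q})^{-1}(qp\text{-}V_{R}^{g}(\overline{I}))$, I would apply the elementary set-theoretic identity $(f\circ g)^{-1}(A) = g^{-1}(f^{-1}(A))$ and then invoke Proposition \ref{P3.4}(1), which computes $(\phi^{R})^{-1}(V_{\overline{R}}^{g}(\overline{I})) = qp\text{-}V_{R}^{g}(\overline{I})$ under the running hypothesis $Ann(M)\subseteq I$.

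The main content is $(\psi^{q})^{-1}(qp\text{-}V_{R}^{g}(\overline{I})) = qp\text{-}V_{M}^{g}(IM)$. I would take $Q\in qp.Spec_{g}(M)$ and unwind: $Q$ lies in the left-hand side iff $\overline{(Q:_{R}M)} \in qp\text{-}V_{R}^{g}(\overline{I})$, which by the same pattern used in the proof of Proposition \ref{P3.4}(1) translates to $I\subseteq Gr((Q:_{R}M))$. Using that $Gr((Q:_{R}M))$ is a graded prime ideal by \cite[Lemma 2.3]{2}, together with the identity $Gr((Q:_{R}M)) = (Gr_{M}(Q):_{R}M)$ from \cite[Theorem 3.5]{1}, I would promote $I\subseteq Gr((Q:_{R}M))$ to $Gr(I)M\subseteq Gr_{M}(Q)$. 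Applying $(-:_{R}M)$ then yields $(IM:_{R}M) \subseteq (Gr(I)M:_{R}M) \subseteq Gr((Q:_{R}M))$, and taking graded radicals gives $Gr((IM:_{R}M)) \subseteq Gr((Q:_{R}M))$, i.e.\ $Q\in qp\text{-}V_{M}^{g}(IM)$. For the reverse direction, the combination of $Gr((Q:_{R}M)) \supseteq Gr((IM:_{R}M))$ with $I\subseteq (IM:_{R}M)$ immediately recovers $I\subseteq Gr((Q:_{R}M))$, which backs up through the identifications to $\overline{(Q:_{R}M)} \supseteq \overline{I}$.

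The step I expect to be the main obstacle is exactly this back-and-forth between the plain ideal containment underlying $qp\text{-}V_{R}^{g}(\overline{I})$ and the radical-level containment defining $qp\text{-}V_{M}^{g}(IM)$; the bridge is the graded primeness of $Gr((Q:_{R}M))$ from \cite[Lemma 2.3]{2}, since only a radical-closed prime lets one promote a plain ideal containment into a radical containment of the required shape.
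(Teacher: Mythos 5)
Your proposal is correct and follows essentially the same route as the paper: the first two equalities come from the factorisation $\varphi=\phi^{R}\circ\psi^{q}$ together with Proposition \ref{P3.4}(1), and the substantive equality $(\psi^{q})^{-1}(qp\text{-}V_{R}^{g}(\overline{I}))=qp\text{-}V_{M}^{g}(IM)$ is proved by the same back-and-forth between $I\subseteq(Gr_{M}(Q):_{R}M)=Gr((Q:_{R}M))$ and $Gr((IM:_{R}M))\subseteq Gr((Q:_{R}M))$, using that $(Gr_{M}(Q):_{R}M)$ is a graded prime (hence radical) ideal. If anything, you make explicit the colon-and-radical step that the paper's forward inclusion ($Gr_{M}(Q)\supseteq IM$, hence $Q\in qp\text{-}V_{M}^{g}(IM)$) leaves implicit.
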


\begin{proof}
Suppose that\textbf{\ }$Q\in \varphi ^{-1}(V_{\overline{R}}^{g}(\overline{I}%
))$\textbf{. }Then\textbf{\ }$\varphi (Q)\in V_{\overline{R}}^{g}(I)$\textbf{%
,} and so\textbf{\ }$(Gr_{M}(Q):_{R}M)\supseteq I$\textbf{. }It follows that%
\begin{equation*}
Gr_{M}(Q)\supseteq (Gr_{M}(Q):_{R}M)M\supseteq IM\mathbf{.}
\end{equation*}%
Hence\textbf{\ }$Gr_{M}(Q)\in qp$\textbf{-}$V_{M}^{g}(IM)$\textbf{. }%
Therefore\textbf{\ }$\varphi ^{-1}(V_{\overline{R}}^{g}(\overline{I}%
))\subseteq qp$\textbf{-}$V_{M}^{g}(IM)$\textbf{. }For the reverse
inclusion, let\textbf{\ }$Q\in qp$\textbf{-}$V_{M}^{g}(IM)$\textbf{. }Then%
\begin{equation*}
\varphi (Q)=\overline{(GrM(Q):_{R}M)}\supseteq \overline{(IM:_{R}M)}%
\supseteq \overline{I}\mathbf{.}
\end{equation*}%
Hence\textbf{\ }$Q\in \varphi ^{-1}(V_{\overline{R}}^{g}(\overline{I}))$%
\textbf{.}
\end{proof}

\bigskip

\begin{theorem}
\label{T3.12}Let $M$ be a graded $R$-module and $M$ be a graded
quasi-primaryful $R$-module. If $\varphi =\phi ^{R}\circ \psi ^{q}$, then%
\begin{equation*}
\varphi (qp\text{-}V_{M}^{g}(K))=V_{\overline{R}}^{g}(\overline{Gr((K:_{R}M))%
})
\end{equation*}%
and%
\begin{equation*}
\varphi (q.Spec_{g}(M)-qp\text{-}V_{M}^{g}(K))=Spec(\overline{R})-V_{%
\overline{R}}^{g}(\overline{Gr((K:_{R}M))}),
\end{equation*}%
i.e., $\varphi $ is both closed and open.
\end{theorem}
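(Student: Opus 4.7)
The plan is to establish the two displayed identities separately, handling the closed statement directly and then deducing the open statement from it by a general set-theoretic argument that uses the surjectivity of $\varphi$ (which is given by Remark \ref{R3.10} since $M$ is graded quasi-primaryful).

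For the closed equality, the inclusion $\varphi(qp\text{-}V_M^g(K)) \subseteq V_{\overline{R}}^g(\overline{Gr((K:_R M))})$ is immediate from the definitions: if $Q \in qp\text{-}V_M^g(K)$, then $Gr((Q:_R M)) \supseteq Gr((K:_R M))$, whence $\varphi(Q) = \overline{Gr((Q:_R M))} \supseteq \overline{Gr((K:_R M))}$. The nontrivial inclusion is the reverse. Given $\overline{p} \in V_{\overline{R}}^g(\overline{Gr((K:_R M))})$ with $p \in Spec_g(R)$ containing $Ann(M)$, note that $p$, being graded prime, is graded quasi-primary with $Gr(p) = p$, so $p \in qp\text{-}V_R^g(Ann(M))$. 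Applying the graded quasi-primaryful hypothesis yields $Q \in qp.Spec_g(M)$ with $Gr((Q:_R M)) = Gr(p) = p$, and therefore $\varphi(Q) = \overline{p}$. The chain $Gr((Q:_R M)) = p \supseteq Gr((K:_R M))$ (obtained by lifting the inclusion $\overline{p} \supseteq \overline{Gr((K:_R M))}$ through the fact that both ideals contain $Ann(M)$) places $Q$ in $qp\text{-}V_M^g(K)$, proving the reverse inclusion.

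For the open equality, I would argue formally using preimages rather than repeat the direct calculation. By Theorem \ref{T3.11} we have $\varphi^{-1}(V_{\overline{R}}^g(\overline{Gr((K:_R M))})) = qp\text{-}V_M^g(Gr((K:_R M))\,M)$, and by Proposition \ref{P3.2}(3) the latter coincides with $qp\text{-}V_M^g(K)$. Hence
\begin{equation*}
qp.Spec_g(M) - qp\text{-}V_M^g(K) \;=\; \varphi^{-1}\bigl(Spec_g(\overline{R}) - V_{\overline{R}}^g(\overline{Gr((K:_R M))})\bigr).
\end{equation*}
Applying $\varphi$ and invoking surjectivity of $\varphi$ (so that $\varphi \circ \varphi^{-1} = \mathrm{id}$ on subsets of $Spec_g(\overline{R})$) produces the desired open equality.

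The main obstacle is the reverse inclusion of the closed statement: one must manufacture a graded quasi-primary submodule $Q$ realizing a prescribed graded prime $p$ as $Gr((Q:_R M))$. This is precisely where the graded quasi-primaryful hypothesis (Definition \ref{D3.9}) is indispensable; without it one would only obtain $\varphi(qp\text{-}V_M^g(K)) \subseteq V_{\overline{R}}^g(\overline{Gr((K:_R M))})$. The only subtlety worth double-checking is the compatibility between inclusions of $p$ in $R$ and inclusions of $\overline{p}$ in $\overline{R}$, which is routine once one observes that $Ann(M) \subseteq (K:_R M) \subseteq Gr((K:_R M))$ so that the relevant ideals all descend cleanly to $\overline{R}$.
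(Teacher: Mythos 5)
Your proposal is correct, and it rests on exactly the same two ingredients as the paper's proof: surjectivity of $\varphi$ (from the graded quasi-primaryful hypothesis, Remark \ref{R3.10}) and the preimage identity $\varphi^{-1}(V_{\overline{R}}^{g}(\overline{Gr((K:_{R}M))}))=qp\text{-}V_{M}^{g}(K)$ obtained from Theorem \ref{T3.11} together with Proposition \ref{P3.2}(3); indeed your treatment of the open equality is verbatim the paper's. The only divergence is in the closed equality, where the paper simply applies $\varphi\circ\varphi^{-1}=\mathrm{id}$ to the preimage identity, while you unpack the reverse inclusion element-wise by invoking Definition \ref{D3.9} to manufacture, for each $\overline{p}\in V_{\overline{R}}^{g}(\overline{Gr((K:_{R}M))})$, a $Q\in qp.Spec_{g}(M)$ with $Gr((Q:_{R}M))=p$ lying in $qp\text{-}V_{M}^{g}(K)$ --- a slightly longer but more transparent route that makes explicit where the quasi-primaryful hypothesis is actually consumed.
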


\begin{proof}
Since $M$ is a graded quasi-primaryful, $\varphi $ is surjective. Also by Theorem \ref{T3.11}, $%
\varphi $ is a continuous map such that $\varphi ^{-1}((V_{\overline{R}}^{g}(%
\overline{I}))=qp$-$V_{M}^{g}(IM)$ for every graded ideal $I$ of $R$
containing $Ann(M)$. Hence, by Proposition \ref{P3.2}(3), for every graded
submodule $K$ of $M$,%
\begin{equation*}
\varphi ^{-1}(V_{\overline{R}}^{g}(\overline{Gr((K:_{R}M))}))=qp\text{-}%
V_{M}^{g}(Gr((K:_{R}M))M)=qp\text{-}V_{M}^{g}(K).
\end{equation*}%
Since the map $\varphi $ is surjective, we have%
\begin{equation*}
\varphi (qp\text{-}V_{M}^{g}(K))=\varphi \circ \varphi ^{-1}(V_{\overline{R}%
}^{g}(\overline{Gr((K:_{R}M))}))=V_{\overline{R}}^{g}(\overline{Gr((K:_{R}M))%
}.
\end{equation*}%
Similarly, we conclude that%
\begin{eqnarray*}
\varphi (q.Spec_{g}(M)-qp\text{-}V_{M}^{g}(K)) &=&\varphi (\varphi
^{-1}(Spec_{g}(\overline{R}))-(\varphi )^{-1}(V_{\overline{R}}^{g}(\overline{%
Gr((K:_{R}M))})) \\
&=&\varphi (\varphi ^{-1}(Spec_{g}(R)-V_{\overline{R}}^{g}(\overline{%
Gr((K:_{R}M))})) \\
&=&\varphi \circ \varphi ^{-1}(Spec_{g}(\overline{R})-V_{\overline{R}}^{g}(%
\overline{Gr((K:_{R}M))}) \\
&=&Spec_{g}(\overline{R})-V_{\overline{R}}^{g}(\overline{Gr((K:_{R}M))}).
\end{eqnarray*}
\end{proof}

\begin{corollary}
\label{T3.13}Let $M$ be a graded $R$-module. Then $\varphi =\phi ^{R}\circ
\psi ^{q}$ is bijective if and only if $\varphi $ is a graded $R$%
-homeomorphism.
\end{corollary}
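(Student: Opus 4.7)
The plan is to combine the earlier results in a straightforward way: Theorem \ref{T3.11} already gives continuity of $\varphi$, and Theorem \ref{T3.12} gives openness and closedness under the hypothesis that $M$ is graded quasi-primaryful, which by Remark \ref{R3.10} is exactly the surjectivity of $\varphi$.

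For the forward direction, I would assume $\varphi$ is bijective. In particular $\varphi$ is surjective, so by Remark \ref{R3.10} the module $M$ is graded quasi-primaryful. Theorem \ref{T3.11} then shows that $\varphi$ is continuous, and Theorem \ref{T3.12} shows that $\varphi$ sends closed sets of the form $qp\text{-}V_{M}^{g}(K)$ to the closed sets $V_{\overline{R}}^{g}(\overline{Gr((K:_{R}M))})$, i.e., $\varphi$ is a closed map (and similarly open). A continuous, closed bijection is a homeomorphism, which yields the conclusion.

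For the reverse direction, if $\varphi$ is a graded $R$-homeomorphism then in particular it is a bijection, which is immediate from the definition of a homeomorphism.

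The main (only) subtlety is to notice that bijectivity of $\varphi$ is equivalent to surjectivity plus injectivity, and surjectivity is precisely the hypothesis needed to invoke Theorem \ref{T3.12}; injectivity is not even needed for the open/closed conclusion there but is of course needed to get a homeomorphism. There is no real obstacle beyond collating the previous three results (Remark \ref{R3.10}, Theorem \ref{T3.11}, Theorem \ref{T3.12}) in the right order.
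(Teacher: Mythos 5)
Your proposal is correct and follows essentially the same route as the paper: continuity from Theorem \ref{T3.11}, openness/closedness from Theorem \ref{T3.12}, and the elementary fact that a continuous open (or closed) bijection is a homeomorphism. In fact you are slightly more careful than the paper's own two-line proof, since you explicitly note that bijectivity gives surjectivity, which via Remark \ref{R3.10} supplies the graded quasi-primaryful hypothesis that Theorem \ref{T3.12} actually requires — a point the paper glosses over.
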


\begin{proof}
By Theorem \ref{T3.11}, $\varphi $ is continuous and by Theorem \ref{T3.12},
$\varphi $ is both closed and open. Thus $\varphi $ is a bijection if and
only if $\varphi $ is a graded $R$-homeomorphism.
\end{proof}
For any graded ideal $I$ of $R$, we define $GX^{qp.R}(I)=q.Spec_{g}(R)-qp$-$%
V_{R}^{g}(I)$ as an open set of $q.Spec_{g}(I)$. Also, $%
GX_{r}^{GX^{qp.R}}=GX^{^{qp.R}}(rM)$ for any $r\in h(R)$. Clearly, $%
GX^{^{qp.R}}(0)=\phi $ and $GX^{qp.R}(1)=q.Spec_{g}(R)$.

\begin{theorem}
\label{T3.15}Let $R$ be a graded ring and $r$, $s\in h(R)$.\newline
(1) $GX_{r}^{qp.R}=\phi $ if and only if $r$ is a graded nilpotent element
of $R$.\newline
(2) $GX_{r}^{qp.R}=q.Spec_{g}(R)$ if and only if $r$ is a graded unit
element of $R$.\newline
(3) For each pair of graded ideals $I$ and $J$ of $R$, $%
GX^{qp.R}(I)=GX^{qp.R}(J)$ if and only if $Gr(I)=Gr(J)$.\newline
(4) $GX_{rs}^{qp.R}=GX_{r}^{qp.R}\cap GX_{s}^{qp.R}$.\newline
(5) ($q.Spec_{g}(R)$, $q.\tau _{R}^{g}$) is quasi-compact.\newline
(6) ($q.Spec_{g}(R)$, $q.\tau _{R}^{g}$) is a $T_{0}$-space.
\end{theorem}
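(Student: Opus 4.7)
The plan is to handle all six parts by reducing each to statements about the graded radical $Gr$, exploiting that for every graded quasi-primary $q$ the radical $Gr(q)$ is graded prime and, conversely, every graded prime is itself quasi-primary with $Gr(p) = p$. Throughout I use that $GX_r^{qp.R}$ is the complement of $qp\text{-}V_R^g(Rr)$, so membership in $GX_r^{qp.R}$ is equivalent to $r \notin Gr(q)$ (equivalently $Rr \not\subseteq q$, up to passage to radicals).

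For (1) and (2), I argue by the graded primes-as-radicals principle. $GX_r^{qp.R} = \phi$ means every graded quasi-primary $q$ contains $r$ in its radical; since the set $\{Gr(q) : q \in qp.Spec_g(R)\}$ equals $Spec_g(R)$, this forces $r \in \bigcap_{p \in Spec_g(R)} p = Gr((0))$, and for homogeneous $r$ this is exactly the condition $r^n = 0$ for some $n$, giving (1). Dually, $GX_r^{qp.R} = qp.Spec_g(R)$ says $r$ avoids every graded prime, so $Rr$ is contained in no graded maximal ideal; standard graded arguments then yield $Rr = R$, equivalently $r$ is a graded unit. Both converses are immediate.

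For (3) and (4), part (3) follows from the twin observations that $qp\text{-}V_R^g(I)$ depends only on $Gr(I)$ and that $Gr(I) = \bigcap_{q \in qp\text{-}V_R^g(I)} Gr(q)$ (by the graded analogue of the classical identity for primes from \cite{18}); thus $qp\text{-}V_R^g(I) = qp\text{-}V_R^g(J)$ forces $Gr(I) = Gr(J)$. Part (4) is the ring analogue of Theorem~\ref{T3.1}(3), i.e. $qp\text{-}V_R^g(Rrs) = qp\text{-}V_R^g(Rr) \cup qp\text{-}V_R^g(Rs)$, which follows from $Gr(Rrs) = Gr(Rr \cap Rs) = Gr(Rr) \cap Gr(Rs)$ together with the primeness of $Gr(q)$ to split the disjunction.

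For (5), an open cover $\{GX^{qp.R}(I_\alpha)\}$ gives $\bigcap_\alpha qp\text{-}V_R^g(I_\alpha) = \phi$, which by the ring analogue of Theorem~\ref{T3.1}(2) equals $qp\text{-}V_R^g(\sum_\alpha I_\alpha)$; hence no graded prime contains $\sum_\alpha I_\alpha$, forcing $\sum_\alpha I_\alpha = R$, so $1 = \sum_{\alpha \in F} x_\alpha$ for a finite $F$ and $\{GX^{qp.R}(I_\alpha)\}_{\alpha \in F}$ is a finite subcover. For (6), given distinct $q_1, q_2 \in qp.Spec_g(R)$, I aim to produce a basic open set $GX_r^{qp.R}$ containing one but not the other by choosing a homogeneous $r$ inside one ideal and outside the other. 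I expect (6) to be the main obstacle: two distinct graded quasi-primaries can share the same graded radical, so a separator built from radicals alone cannot work, and the argument must exploit the stricter inclusion $I \subseteq q$ definition of $qp\text{-}V_R^g$ from the preliminaries to find a homogeneous element witnessing the actual difference between $q_1$ and $q_2$.
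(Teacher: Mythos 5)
Parts (1)--(5) of your proposal are correct and follow essentially the same route as the paper: (1) and (2) via the fact that $\{Gr(q):q\in qp.Spec_{g}(R)\}$ recovers $Spec_{g}(R)$ together with $Gr((0))=\bigcap_{p}p$ and the passage through $Max_{g}(R)$; (3) via the observation that the variety is determined by $Gr(I)$ and that $Gr(I)$ is recovered as an intersection of the radicals of its members (the paper phrases this by restricting to the graded primes containing $I$, which is the same computation); (4) via $Gr(Rrs)=Gr(Rr)\cap Gr(Rs)$ and primeness of $Gr(q)$; (5) via $\bigcap_{\alpha}qp\text{-}V_{R}^{g}(I_{\alpha})=qp\text{-}V_{R}^{g}(\sum_{\alpha}I_{\alpha})$ and extracting $1$ from a finite subsum, exactly as in the paper.

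Part (6) is where you stop short: you state what a separating basic open set \emph{would} have to look like and flag the obstacle, but you never produce the set, so as written this part is incomplete. The missing two lines are: since $q_{1}\neq q_{2}$, without loss of generality $q_{1}\nsubseteq q_{2}$; because $q_{1}$ is a graded ideal it is generated by its homogeneous elements, so there is a homogeneous $r\in q_{1}\setminus q_{2}$, and then (reading $qp\text{-}V_{R}^{g}$ via the literal containment $I\subseteq q$ of the preliminaries) $q_{2}\in GX_{r}^{qp.R}$ while $q_{1}\notin GX_{r}^{qp.R}$. This is precisely the paper's argument, which simply uses the open set $GX^{qp.R}(q_{1})$ itself instead of a basic one. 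That said, the worry you raise is well-founded and in fact exposes an inconsistency in the theorem as stated: parts (1), (3) and (4) are proved (and are only true) with the radical criterion $Gr(q)\supseteq Gr(I)$ for membership in $qp\text{-}V_{R}^{g}(I)$ --- for instance, under the containment definition, $q=(0)$ in $k[x]/(x^{2})$ is graded quasi-primary and does not contain the nilpotent $x$, so (1) would fail --- whereas (6) is only true with the containment definition, since two distinct quasi-primary ideals with the same graded radical (again $(0)$ and $(x)$ in $k[x]/(x^{2})$) cannot be separated by any radical-defined closed set. So your diagnosis of the tension is correct; what is missing is the explicit separation argument, and an acknowledgement that no single definition of the variety validates all six parts simultaneously.
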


\begin{proof}
(1) Let $r\in h(R)$. Then%
\begin{eqnarray*}
\phi &=&GX_{r}^{qp.R}=q.Spec_{g}(R)-qp\text{-}V_{R}^{g}(Ra)\text{.} \\
&\Leftrightarrow &qp\text{-}V_{R}^{g}(Ra)=q.Spec_{g}(R)\text{.} \\
&\Leftrightarrow &Gr(q)\supseteq Ra\text{ for every }q\in q.Spec_{g}(R)\text{%
.} \\
&\Leftrightarrow &r\text{ is in every graded prime ideal of }R\text{.} \\
&\Leftrightarrow &r\text{ is a graded nilpotent element of }R\text{.}\newline
\end{eqnarray*}%
(2) Let $r\in h(R)$. Then%
\begin{eqnarray*}
GX_{r}^{qp.R} &=&q.Spec_{g}(R)\Leftrightarrow r\notin Gr(q)\text{ for all }%
q\in q.Spec_{g}(R)\text{.} \\
&\Rightarrow &r\notin q\text{ for all }q\in Max_{g}(R)\text{.} \\
&\Rightarrow &r\text{ is graded unit.}
\end{eqnarray*}%
Conversely, if $r$ is a graded unit, then clearly $r$ is not in any graded
quasi-primary ideal. That is, $GX^{qp.R}(I)=q.Spec(R)$.\newline
(3) Suppose that $GX^{qp.R}(I)=GX^{qp.R}(J)$. Let $p$ be a graded prime
ideal of $R$ containing $I$. Since $p$ is a graded quasi-primary ideal of $R$
and $p\supseteq Gr(I)$, we have $p\in qp$-$V_{R}^{g}(I)$. Thus, by
assumption, $p\supseteq Gr(J)\supseteq J$ and so every graded prime ideal of
$R$ containing $I$ is also a graded prime ideal of $R$ containing $J$, and
vice versa. Therefore $Gr(I)=Gr(J)$. The converse is trivially true.\newline
(4) It suffices to show that $qp$-$V_{M}^{g}(Rab)=qp$-$V_{M}^{g}(Ra)\cup qp$-%
$V_{M}^{g}(Rb)$. Let $q\in qp$-$V_{M}^{g}(Rab)$. Then%
\begin{eqnarray*}
Gr(q) &\supseteq &Gr(Rab)=Gr(Ra)\cap Gr(Rb) \\
&\Leftrightarrow &Gr(q)\supseteq Gr(Ra)\text{ or }Gr(q)\supseteq Gr(Rb)\text{%
.} \\
&\Leftrightarrow &q\in qp\text{-}V_{M}^{g}(Ra)\text{ or }q\in qp\text{-}%
V_{M}^{g}(Rb)\text{.} \\
&\Leftrightarrow &q\in qp\text{-}V_{M}^{g}(Ra)\cup qp\text{-}V_{M}^{g}(Rb)%
\text{.}
\end{eqnarray*}%
\newline
(5) Let $q.Spec_{g}(R)=\cup _{i\in I}GX^{qp.R}(J_{i})$, where $%
\{J_{i}\}_{i\in I}$ is a family of graded ideals of $R$. We clearly have $%
GX_{R}^{qp.R}=q.Spec_{g}(R)=GX^{qp.R}(\sum_{i\in I}J_{i})$. Thus, by part
(3), we have $R=Gr(\sum_{i\in I}J_{i})$ and hence, $1\in \sum_{i\in I}J_{i}$%
. So there are $i_{1}$, $i_{2}$,..., $i_{n}\in I$ such that $1\in \sum_{i\in
I}^{n}J_{i_{k}}$, that is $R=\sum_{i\in I}^{n}J_{i_{k}}$. Consequently $%
q.Spec_{g}(R)=GX_{R}^{qp.R}=GX^{qp.R}(\sum_{i\in I}^{n}J_{i_{k}})=\cup
_{k=1}^{n}GX_{J_{i_{k}}}^{qp.R}.$\newline
(6) Let $q_{1}$, $q_{2}$ be two distinct points of $q.Spec_{g}(R)$. If $%
q_{1}\nsubseteq q_{2}$, then obviously $q_{2}\in GX_{q_{1}}^{qp.R}$ and $%
q_{1}\notin GX_{q_{1}}^{qp.R}$.
\end{proof}

\begin{proposition}\label{P3.16}
Let $M$ be a graded $R$-module and $r$, $s\in h(R)$.\newline
(1) $(\psi ^{q})^{-1}(GX_{\overline{r}}^{qp.\overline{R}})=GX_{r}^{qp.M}$.%
\newline
(2) $\psi ^{q}(GX_{r}^{qp.M})\subseteq GX_{\overline{r}}^{qp.\overline{R}}$
and the equality holds if $\psi ^{q}$ is surjective.\newline
(3) $GX_{rs}^{qp.M}=GX_{r}^{qp.M}\cap GX_{s}^{qp.M}$.
\end{proposition}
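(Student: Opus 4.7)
The plan is to reduce all three parts to a single key characterization: for any $Q\in qp.Spec_{g}(M)$ and $r\in h(R)$,
\[
Q\in qp\text{-}V_{M}^{g}(rM)\;\Longleftrightarrow\;r\in Gr((Q:_{R}M)),
\]
equivalently $Q\in GX_{r}^{qp.M}\iff r\notin Gr((Q:_{R}M))$. To prove this I would use the identity $Gr((Q:_{R}M))=(Gr_{M}(Q):_{R}M)$ recorded in Remark \ref{R3.6}. The forward direction is immediate, since $r\in(rM:_{R}M)\subseteq Gr((rM:_{R}M))$, so $Gr((Q:_{R}M))\supseteq Gr((rM:_{R}M))$ forces $r\in Gr((Q:_{R}M))$. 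For the converse, if $r\in Gr((Q:_{R}M))=(Gr_{M}(Q):_{R}M)$, then $rM\subseteq Gr_{M}(Q)$, so $(rM:_{R}M)\subseteq(Gr_{M}(Q):_{R}M)=Gr((Q:_{R}M))$, and taking graded radicals gives $Gr((rM:_{R}M))\subseteq Gr((Q:_{R}M))$, as required.

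With the characterization in hand, part (1) is a direct chase through the definitions. Since $\psi^{q}(Q)=\overline{(Q:_{R}M)}$ and graded radicals commute with the quotient map $R\to\overline{R}$ on graded ideals containing $Ann(M)$, I would write
\[
Q\in(\psi^{q})^{-1}(GX_{\overline{r}}^{qp.\overline{R}})\iff \overline{r}\notin Gr(\overline{(Q:_{R}M)})\iff r\notin Gr((Q:_{R}M))\iff Q\in GX_{r}^{qp.M}.
\]
For part (2), I apply $\psi^{q}$ to both sides of (1): $\psi^{q}(GX_{r}^{qp.M})=\psi^{q}\bigl((\psi^{q})^{-1}(GX_{\overline{r}}^{qp.\overline{R}})\bigr)\subseteq GX_{\overline{r}}^{qp.\overline{R}}$, with equality whenever $\psi^{q}$ is surjective, by the set-theoretic identity $f\circ f^{-1}=\mathrm{id}$ on subsets of the image.

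For part (3), the key characterization reduces the claim to the identity $rs\notin Gr((Q:_{R}M))\iff r\notin Gr((Q:_{R}M))\text{ and }s\notin Gr((Q:_{R}M))$, which is precisely the statement that $Gr((Q:_{R}M))$ is prime; this is exactly what Remark \ref{R3.6} records (citing \cite[Lemma 2.3]{2}). The main obstacle is really only Step~1; once the equivalence $Q\in GX_{r}^{qp.M}\iff r\notin Gr((Q:_{R}M))$ is established, the remaining assertions are formal, and the subtle ingredient is the interplay between $(rM:_{R}M)$ and $Gr((Q:_{R}M))$, which is forced by the primeful property of $Q$ via the equality $Gr((Q:_{R}M))=(Gr_{M}(Q):_{R}M)$.
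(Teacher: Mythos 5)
Your proposal is correct, and it reaches the same destination as the paper by a slightly more self-contained route. The paper proves (1) by quoting the preimage formula $(\psi ^{q})^{-1}(qp$-$V_{\overline{R}}^{g}(\overline{rR}))=qp$-$V_{M}^{g}(rM)$ from Proposition \ref{P3.4}/Theorem \ref{T3.11} and taking complements, and then proves (3) by pulling back the ring-level identity $GX_{\overline{rs}}^{qp.\overline{R}}=GX_{\overline{r}}^{qp.\overline{R}}\cap GX_{\overline{s}}^{qp.\overline{R}}$ of Theorem \ref{T3.15}(4) through part (1); your version instead re-derives the principal-ideal case of that preimage formula as the pointwise criterion $Q\in GX_{r}^{qp.M}\Leftrightarrow r\notin Gr((Q:_{R}M))$ and settles (3) directly from the primeness of $Gr((Q:_{R}M))$, which is the same fact Theorem \ref{T3.15}(4) rests on one level down. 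What your approach buys is transparency: the only nontrivial step is isolated in your Step 1, and its converse direction correctly routes through $Gr((Q:_{R}M))=(Gr_{M}(Q):_{R}M)$ (Remark \ref{R3.6}, valid because members of $qp.Spec_{g}(M)$ satisfy the primeful property) to get $rM\subseteq Gr_{M}(Q)$ and hence $Gr((rM:_{R}M))\subseteq Gr((Q:_{R}M))$ --- a point a naive argument would miss, since $(rM:_{R}M)$ need not lie in $Gr(rR+Ann(M))$. What the paper's route buys is reuse of already-established machinery. Part (2) is handled identically in both.
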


\begin{proof}
(1) Since $\psi ^{q}$ is continuous, by Proposition \ref{P3.4}(3), we have%
\begin{eqnarray*}
(\psi ^{q})^{-1}(GX_{\overline{r}}^{qp.\overline{R}}) &=&(\psi
^{q})^{-1}(q.Spec_{g}(\overline{R})-qp\text{-}V_{M}^{g}(\overline{a}%
\overline{R}))\text{.} \\
&=&q.Spec_{g}(M)-(\psi ^{q})^{-1}(qp\text{-}V_{M}^{g}(\overline{a}\overline{R%
}))\text{.} \\
&=&q.Spec_{g}(M)-qp\text{-}V_{M}^{g}(aM)\text{.} \\
&=&GX_{r}^{qp.M}\text{.}
\end{eqnarray*}%
(2) follows from part (1).\newline
(3) Let $r$, $s\in h(R)$. Then%
\begin{eqnarray*}
GX_{rs}^{qp.M} &=&(\psi ^{q})^{-1}(GX_{\overline{rs}}^{qp.\overline{R}})%
\text{ by part (1)\textbf{.}} \\
&=&(\psi ^{q})^{-1}(GX_{\overline{r}}^{qp.\overline{R}}\cap GX_{\overline{s}%
}^{qp.\overline{R}})\text{ by Theorem \ref{T3.15}(4)\textbf{.}} \\
&=&(\psi ^{q})^{-1}(GX_{\overline{r}}^{qp.\overline{R}})\cap (\psi
^{q})^{-1}(GX_{\overline{s}}^{qp.\overline{R}})\text{.} \\
&=&GX_{r}^{qp.M}\cap GX_{s}^{qp.M}\text{.}\newline
\end{eqnarray*}
\end{proof}

 For any graded submodule $K$ of $M$, we define $%
GX^{qp.M}(K)=q.Spec_{g}(M)-qp $-$V_{M}^{g}(K)$ as an open set of $%
q.Spec_{g}(M)$. Also, $GX_{r}^{qp.M}=GX^{qp.M}(rM)$ for any $r\in h(R)$.
Clearly, $GX^{qp.M}(0)=\phi $ and $GX^{qp.M}(1)=q.Spec_{g}(M)$. The
following result shows that the set $\beta =\{GX_{r}^{qp.M}\mid r\in h(R)\}$
is a base for the quasi-Zariski topology on $q.Spec_{g}(M)$.

\begin{theorem}
\label{T3.14}Let $M$ be a graded $R$-module. The set $\beta
=\{GX_{r}^{qp.M}\mid r\in h(R)\}$ forms a base for the quasi-Zariski
topology on $q.Spec_{g}(M)$.
\end{theorem}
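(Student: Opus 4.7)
The plan is to show that every open set in $(qp.Spec_{g}(M), q.\tau^{g})$ is a union of members of $\beta$. Because the closed sets of the topology are exactly the sets $qp$-$V_{M}^{g}(K)$ for graded submodules $K$ of $M$, every open set has the form $GX^{qp.M}(K) = qp.Spec_{g}(M) - qp$-$V_{M}^{g}(K)$, so it suffices to write each such open set as a union of basic sets $GX_{r}^{qp.M} = GX^{qp.M}(rM)$ with $r \in h(R)$.

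The key technical step is the characterization: for every $r \in h(R)$ and $Q \in qp.Spec_{g}(M)$,
$$Q \in qp\text{-}V_{M}^{g}(rM) \iff r \in Gr((Q:_{R}M)).$$
The ($\Rightarrow$) direction is immediate from $r \in (rM:_{R}M) \subseteq Gr((rM:_{R}M)) \subseteq Gr((Q:_{R}M))$. For the reverse direction, if $r^{n} \in (Q:_{R}M)$ and $s \in (rM:_{R}M)$, so that $sM \subseteq rM$, a short induction using $s(rM) = r(sM) \subseteq r(rM)$ gives $s^{n}M \subseteq r^{n}M \subseteq Q$, hence $(rM:_{R}M) \subseteq Gr((Q:_{R}M))$ and therefore $Gr((rM:_{R}M)) \subseteq Gr((Q:_{R}M))$. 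Equivalently, $Q \in GX_{r}^{qp.M}$ iff $r \notin Gr((Q:_{R}M))$.

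For a general graded submodule $K$, I next reduce the containment of graded radicals to homogeneous elements. By Remark \ref{R3.6} (together with \cite[Lemma 2.3]{2}), the ideal $Gr((Q:_{R}M))$ is graded prime, so $Gr((K:_{R}M)) \subseteq Gr((Q:_{R}M))$ is equivalent to $(K:_{R}M) \subseteq Gr((Q:_{R}M))$; and since both sides are graded, this amounts to $h(R) \cap (K:_{R}M) \subseteq Gr((Q:_{R}M))$. Hence $Q \in GX^{qp.M}(K)$ exactly when there is some $r \in h(R) \cap (K:_{R}M)$ with $r \notin Gr((Q:_{R}M))$, which by the previous paragraph is $Q \in GX_{r}^{qp.M}$. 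This yields
$$GX^{qp.M}(K) = \bigcup_{r \in h(R) \cap (K:_{R}M)} GX_{r}^{qp.M},$$
and proves that $\beta$ is a base.

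The step I expect to be the main obstacle is the backward direction of the key equivalence above: promoting the elementwise condition $r \in Gr((Q:_{R}M))$ to the containment of the full ideal $Gr((rM:_{R}M))$ inside $Gr((Q:_{R}M))$. Once that is in hand, the reduction from an arbitrary graded submodule to the homogeneous generators of its colon ideal is routine and uses only the graded-prime character of $Gr((Q:_{R}M))$ recorded in Remark \ref{R3.6}.
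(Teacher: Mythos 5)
Your proof is correct, and it arrives at the same basic decomposition the paper does --- every open set $GX^{qp.M}(K)$ is a union of sets $GX_{r}^{qp.M}$ indexed by homogeneous elements attached to $(K:_{R}M)$ --- but by a genuinely different route. The paper proceeds by a chain of set identities: it replaces $K$ by $Gr((K:_{R}M))M$ via Proposition \ref{P3.2}(3), writes this as $\sum_{r}rM$, and invokes the intersection formula of Theorem \ref{T3.1}(2) to convert $qp$-$V_{M}^{g}(\sum_{r}rM)$ into $\bigcap_{r}qp$-$V_{M}^{g}(rM)$, whence the complement is the required union (implicitly over homogeneous $r$, since $GX_{r}^{qp.M}$ is only defined for $r\in h(R)$). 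You instead establish the pointwise criterion $Q\in qp$-$V_{M}^{g}(rM)\Leftrightarrow r\in Gr((Q:_{R}M))$ from scratch: the induction $s^{n}M\subseteq r^{n}M$ for homogeneous $s\in (rM:_{R}M)$ is exactly the right argument, and promoting the elementwise statement to $Gr((rM:_{R}M))\subseteq Gr((Q:_{R}M))$ is legitimate because $Gr((Q:_{R}M))$ is a graded prime ideal (Remark \ref{R3.6}), hence graded radical. Your reduction of $Gr((K:_{R}M))\subseteq Gr((Q:_{R}M))$ to the homogeneous elements of $(K:_{R}M)$ is likewise sound for the same reason. What your version buys is self-containedness and a cleanly homogeneous index set $h(R)\cap(K:_{R}M)$; what the paper's version buys is brevity, since it recycles Theorem \ref{T3.1}(2) and Proposition \ref{P3.2}(3) instead of reproving the membership criterion. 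No gap either way.
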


\begin{proof}
We may assume that $q.Spec_{g}(M)\neq \phi $. We will show that every open
subset of ($qp.Spec_{g}(M)$, $q.\tau ^{g}$) is a union of members of $\beta $%
. Let $U$ be an open subset in ($qp.Spec_{g}(M)$, $q.\tau ^{g}$). Thus $%
U=q.Spec_{g}(M)-qp$-$V_{M}^{g}(K)$ for some graded submodule $K$ of $M$.
Therefore%
\begin{eqnarray*}
U &=&q.Spec_{g}(M)-qp\text{-}V_{M}^{g}(K) \\
&=&q.Spec_{g}(M)-qp\text{-}V_{M}^{g}(Gr((K:_{R}M))M)\text{.} \\
&=&q.Spec_{g}(M)-qp\text{-}V_{M}^{g}(\sum\limits_{r\in Gr((K:_{R}M))}rM)%
\text{.} \\
&=&q.Spec_{g}(M)-qp\text{-}V_{M}^{g}(\sum\limits_{r\in
Gr((K:_{R}M))}(rM:_{R}M)M)\text{.} \\
&=&q.Spec_{g}(M)-\bigcap\limits_{r\in Gr((K:_{R}M))}qp\text{-}V_{M}^{g}(aM)%
\text{.} \\
&=&\bigcup\limits_{r\in Gr((K:_{R}M))}GX_{r}^{qp.M}\text{.}
\end{eqnarray*}
\end{proof}
In the next Theorem gives an important characterization of the quasi-Zariski
topology over $qp.Spec_{g}(M)$ and will be need in the last Theorem of this
paper.

\begin{theorem}
\label{T3.16}Let $M$ be a graded $R$-module. If $\psi ^{q}$ is surjective,
then the open set $GX_{r}^{qp.M}$ in ($q.Spec_{g}(M)$, $q.\tau _{M}^{g}$) is
quasi-compact. In particular, the space ($q.Spec_{g}(M)$, $q.\tau _{M}^{g}$)
is quasi-compact.
\end{theorem}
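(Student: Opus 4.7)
The plan is to transfer a basic-open cover of $GX_r^{qp.M}$ to the ring side $q.Spec_g(\overline{R})$ via the map $\psi^q$, establish quasi-compactness of the corresponding basic open there by imitating the proof of Theorem \ref{T3.15}(5), and then pull the finite subcover back to $q.Spec_g(M)$. By Theorem \ref{T3.14}, an arbitrary open cover of $GX_r^{qp.M}$ can be refined to one by basic opens, so suppose $GX_r^{qp.M} \subseteq \bigcup_{j \in J} GX_{s_j}^{qp.M}$ with each $s_j \in h(R)$. Since $\psi^q$ is surjective, the equalities $\psi^q(GX_r^{qp.M}) = GX_{\overline{r}}^{qp.\overline{R}}$ and $\psi^q(GX_{s_j}^{qp.M}) = GX_{\overline{s_j}}^{qp.\overline{R}}$ from Proposition \ref{P3.16}(2) yield
\[
GX_{\overline{r}}^{qp.\overline{R}} \subseteq \bigcup_{j \in J} GX_{\overline{s_j}}^{qp.\overline{R}}
\]
in $q.Spec_g(\overline{R})$.

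For the ring-side step, I would first note the elementary identity $\bigcup_j GX^{qp.\overline{R}}(J_j) = GX^{qp.\overline{R}}(\sum_j J_j)$, valid because a point $q$ lies in the union iff some $J_j \not\subseteq Gr(q)$, iff $\sum_j J_j \not\subseteq Gr(q)$. Applied with $J_j = \overline{s_j}\,\overline{R}$, this rewrites the cover as $GX^{qp.\overline{R}}(\overline{r}\,\overline{R}) \subseteq GX^{qp.\overline{R}}(\sum_j \overline{s_j}\,\overline{R})$. The order-preserving version of Theorem \ref{T3.15}(3), which follows from the iff statement there via the standard $I + J$ trick, then forces $Gr(\overline{r}\,\overline{R}) \subseteq Gr(\sum_j \overline{s_j}\,\overline{R})$. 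Hence $\overline{r}^{\,n} \in \sum_j \overline{s_j}\,\overline{R}$ for some $n \in \mathbb{N}$, and only finitely many $\overline{s_{j_1}}, \ldots, \overline{s_{j_N}}$ appear in such an expression. Retracing the chain with this finite sum yields $GX_{\overline{r}}^{qp.\overline{R}} \subseteq \bigcup_{k=1}^{N} GX_{\overline{s_{j_k}}}^{qp.\overline{R}}$.

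Finally, applying $(\psi^q)^{-1}$ and using Proposition \ref{P3.16}(1),
\[
GX_r^{qp.M} = (\psi^q)^{-1}\bigl(GX_{\overline{r}}^{qp.\overline{R}}\bigr) \subseteq \bigcup_{k=1}^{N} (\psi^q)^{-1}\bigl(GX_{\overline{s_{j_k}}}^{qp.\overline{R}}\bigr) = \bigcup_{k=1}^{N} GX_{s_{j_k}}^{qp.M},
\]
producing the required finite subcover. The ``in particular'' statement follows by taking $r = 1 \in R_e \subseteq h(R)$: then $rM = M$, so $qp\text{-}V_M^g(M) = \emptyset$ by Theorem \ref{T3.1}(1), giving $GX_1^{qp.M} = q.Spec_g(M)$. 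The main delicate point is the transfer step itself: one must use surjectivity of $\psi^q$ precisely to upgrade Proposition \ref{P3.16}(2) from a general inclusion to the equality $\psi^q(GX_r^{qp.M}) = GX_{\overline{r}}^{qp.\overline{R}}$, since without equality on the source the $GX_{\overline{s_j}}^{qp.\overline{R}}$ need not actually cover $GX_{\overline{r}}^{qp.\overline{R}}$ and the entire ring-side reduction would collapse.
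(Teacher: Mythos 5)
Your proof is correct and follows essentially the same route as the paper's: reduce to a cover by basic opens via Theorem \ref{T3.14}, push it forward to $q.Spec_{g}(\overline{R})$ using the surjectivity-dependent equality in Proposition \ref{P3.16}(2), extract a finite subcover there, and pull back with Proposition \ref{P3.16}(1). The only difference is that where the paper merely asserts that $GX_{\overline{r}}^{qp.\overline{R}}$ is quasi-compact, you actually prove it via the identity $\bigcup_{j}GX^{qp.\overline{R}}(J_{j})=GX^{qp.\overline{R}}(\sum_{j}J_{j})$ and the radical-membership argument $\overline{r}^{\,n}\in\sum_{j}\overline{s_{j}}\,\overline{R}$, which is a welcome filling of a step the paper leaves unjustified.
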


\begin{proof}
Since $B =\{GX_{r}^{qp.M}\mid r\in h(R)\}$ forms a base for the
quasi-Zariski topology on $q.Spec_{g}(M)$ by Theorem \ref{T3.14}, for any
open cover of $GX_{r}^{qp.M}$, there is a family $\{r_{i}\in h(R)\mid i\in
\Delta \}$ of elements of $R$ such that $GX_{r}^{qp.M}\subseteq \cup _{i\in
I}GX_{r_{i}}^{qp.M}$. By part (2), $GX_{\overline{r}}^{qp.\overline{R}}=\psi
^{q}(GX_{r}^{qp.M})\subseteq \cup _{i\in \Delta }\psi
^{q}(GX_{r_{i}}^{qp.M})=\cup _{i\in \Delta }GX_{\overline{r_{i}}}^{qp.%
\overline{R}}$. It follows that there exists a finite subset $\Lambda $ of $%
\Delta $ such that $GX_{\overline{r}}^{qp.\overline{R}}\subseteq \cup _{i\in
\Lambda }GX_{\overline{r_{i}}}^{qp.\overline{R}}$ as $GX_{\overline{r}}^{qp.%
\overline{R}}$ is quasi-compact, since $\phi ^{R}$ is surjective, whence $%
GX_{r}^{qp.M}=(\psi ^{q})^{-1}(GX_{\overline{r}}^{qp.\overline{R}})\subseteq
$ $\cup _{i\in \Lambda }GX_{r_{i}}^{qp.M}$ by Proposition \ref{P3.16}(1).
\end{proof}

\begin{theorem}
\label{T3.17}Let $M$ be a graded $R$-module. If the map $\psi ^{q}$ is a surjective,
then the quasi-compact open sets of ($q.Spec_{g}(M)$, $q.\tau _{M}^{g}$) are
closed under finite intersection and form an open base.
\end{theorem}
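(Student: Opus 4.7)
The plan is to combine three facts already in hand: (i) the basic open sets $\beta = \{GX_r^{qp.M} \mid r \in h(R)\}$ form a base for the quasi-Zariski topology (Theorem \ref{T3.14}); (ii) each $GX_r^{qp.M}$ is quasi-compact under the hypothesis that $\psi^q$ is surjective (Theorem \ref{T3.16}); and (iii) the base is closed under finite intersection via the identity $GX_{rs}^{qp.M} = GX_r^{qp.M} \cap GX_s^{qp.M}$ (Proposition \ref{P3.16}(3)). From (i) and (ii) it follows immediately that the collection of quasi-compact open subsets of $q.Spec_g(M)$ contains a base, hence is itself an open base for the topology.

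The remaining content is the closure under finite intersection, and by induction it suffices to treat two sets. Let $U$ and $V$ be quasi-compact open subsets of $q.Spec_g(M)$. First I would cover each by basic opens using (i) and pass to finite subcovers using quasi-compactness of $U$ and $V$; this gives finite families $r_1,\dots,r_n \in h(R)$ and $s_1,\dots,s_m \in h(R)$ with
\begin{equation*}
U = \bigcup_{i=1}^{n} GX_{r_i}^{qp.M}, \qquad V = \bigcup_{j=1}^{m} GX_{s_j}^{qp.M}.
\end{equation*}

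Distributing the intersection and applying Proposition \ref{P3.16}(3) yields
\begin{equation*}
U \cap V \;=\; \bigcup_{i=1}^{n}\bigcup_{j=1}^{m} \bigl( GX_{r_i}^{qp.M} \cap GX_{s_j}^{qp.M} \bigr) \;=\; \bigcup_{i=1}^{n}\bigcup_{j=1}^{m} GX_{r_i s_j}^{qp.M},
\end{equation*}
a finite union of basic open sets. Each summand $GX_{r_i s_j}^{qp.M}$ is quasi-compact by Theorem \ref{T3.16}, and a finite union of quasi-compact sets is quasi-compact in any topological space; therefore $U \cap V$ is quasi-compact, completing the proof.

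I do not anticipate any real obstacle here: the argument is purely formal once one has Theorem \ref{T3.14}, Theorem \ref{T3.16}, and Proposition \ref{P3.16}(3) in place. The only point that requires the hypothesis ``$\psi^q$ is surjective'' is the quasi-compactness of each basic open $GX_r^{qp.M}$, which is imported verbatim from Theorem \ref{T3.16}; without that hypothesis one would lose both parts of the conclusion simultaneously, so no separate case analysis is needed.
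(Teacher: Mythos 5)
Your proposal is correct and follows essentially the same route as the paper: both arguments write each quasi-compact open as a finite union of basic opens $GX_{r}^{qp.M}$, distribute the intersection using $GX_{rs}^{qp.M}=GX_{r}^{qp.M}\cap GX_{s}^{qp.M}$ from Proposition \ref{P3.16}(3), and conclude via quasi-compactness of the basic opens (Theorem \ref{T3.16}) together with the fact that a finite union of quasi-compact sets is quasi-compact. If anything, your write-up is cleaner than the paper's, which at one point miswrites the finite union of basic opens as an intersection.
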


\begin{proof}
It suffices to show that the intersection $U=U_{1}\cap U_{2}$ of two
quasi-compact open sets $U_{1}$ and $U_{2}$ of ($q.Spec_{g}(M)$, $q.\tau
_{M}^{g}$) is a quasi-compact set. Each $U_{j}$, $j=1$ or $2$, is a finite
union of members of the open base $\Phi =\{GX_{r}^{qp.M}\mid r\in h(R)\}$,
hence so is $U$ due to Proposition \ref{P3.16}. Put $U=\cap
_{i=1}^{n}GX_{r_{i}}^{qp.M}$ and let $\Omega $ be any open cover of $U$.
Then $\Omega $ also covers each $GX_{r_{i}}^{qp.M}$ which is quasi-compact
by Theorem \ref{T3.16}. Hence, each $GX_{r_{i}}^{qp.M}$ has a finite
subcover of $\Omega $ and so does $U$. The other part of the theorem is
trivially true due to the existence of the open base $U$.
\end{proof}

\section{Topological properties on ($qp.Spec_{g}(M)$, $q.\protect\tau ^{g}$)}

\bigskip

Let $M$ be a graded $R$-module. In this section we investigate and study
some topological properties for ($qp.Spec_{g}(M)$, $q.\tau ^{g}$) such as
the irreducibility.\newline
Recall that a topological space $(X$, $\tau )$ is said to be a connected if
it is not the union $X=X_{0}\cup X_{1}$ of two disjoint closed non-empty
subsets $X_{0}$\ and $X_{1},$ (see \cite[Definition 2.105]{*}).

\begin{remark}
\label{R4.1}
Let $(X$, $\tau _{1})$ and $(Y$, $\tau _{2})$ be two topological spaces and $%
f$ be a continuous mapping from $(X$, $\tau _{1})$ to $(Y$, $\tau _{2})$.

\ \ \  (1) If $(X$, $\tau _{1})$ is a connected (resp. quasi compact)
topological space, then $f(X)$ is a connected (resp. quasi compact)
topological space (see \cite[Theorem 2.107 and Theorem 2.138]{*}.

\ \ \ (2) For every irreducible subset $E$ of $(X$, $\tau _{1})$, $f(E)$ is
an irreducible subset of $(Y$, $\tau _{2})$, (see \cite[Proposition 2]{**}).
\end{remark}

The following Theorem give the relation between the connectedness of ($%
qp.Spec_{g}(M)$, $q.\tau ^{g}$) with other topological spaces.

\begin{theorem}
\label{T4.1}Let $M$ be a graded quasi-primaryful $R$-module. Consider the
following statements:\newline
(1) ($Spec_{g}(\overline{R})$, $\tau _{\overline{R}}^{g}$) is a connected
space.\newline
(2) ($q.Spec_{g}(\overline{R})$, $q.\tau _{\overline{R}}^{g}$) is a
connected space.\newline
(3) ($qp.Spec_{g}(M)$, $q.\tau ^{g}$) is a connected space.\newline
(4) ($Spec_{g}(M)$, $\tau ^{g}$) is a connected space.\newline
Then (1)$\Leftrightarrow $(2)$\Leftrightarrow $(3)$\Rightarrow $(4).
Moreover, if $M$ is graded primaryful, then (4)$\Rightarrow $(1).
\end{theorem}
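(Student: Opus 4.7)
The plan is to route the connectedness of all four spaces through $Spec_g(\overline{R})$ via the continuous maps $\phi^R$, $\varphi=\phi^R\circ\psi^q$, and $\psi$. The pattern is uniform: each sends its source into $Spec_g(\overline{R})$, and by the identities established in Proposition \ref{P3.4}, Theorem \ref{T3.11}, and the definition of $V_G(-)$, every closed variety in the source is literally the preimage of a graded closed set in $Spec_g(\overline{R})$. Combined with the fact that $\phi^R$ (always) and $\varphi$ (under the quasi-primaryful hypothesis, by Remark \ref{R3.10}) are surjective, open and closed, this will let me transfer disconnections in both directions.

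For the chain $(1)\Leftrightarrow(2)\Leftrightarrow(3)$, the forward directions $(2)\Rightarrow(1)$ and $(3)\Rightarrow(1)$ are immediate from Remark \ref{R4.1}(1) applied to the continuous surjections $\phi^R$ and $\varphi$. For $(1)\Rightarrow(2)$, I suppose $qp.Spec_g(\overline{R})=A\sqcup B$ is a separation into nonempty clopen sets. Writing $A=qp\text{-}V_R^g(\overline{I})$, Proposition \ref{P3.4} gives $A=(\phi^R)^{-1}(V_{\overline{R}}^g(\overline{I}))$ and $\phi^R(A)=V_{\overline{R}}^g(\overline{I})$; hence $A=(\phi^R)^{-1}(\phi^R(A))$ and similarly $B=(\phi^R)^{-1}(\phi^R(B))$. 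Surjectivity of $\phi^R$ then forces $\phi^R(A)\cap\phi^R(B)=\emptyset$, while Proposition \ref{P3.4}(2) guarantees both images are clopen, yielding a separation of $Spec_g(\overline{R})$. The proof of $(1)\Rightarrow(3)$ is formally identical, with Theorem \ref{T3.11} and Theorem \ref{T3.12} replacing Proposition \ref{P3.4}.

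For $(3)\Rightarrow(4)$ I argue contrapositively. Given a separation $Spec_g(M)=V_G(N_1)\sqcup V_G(N_2)$ into nonempty closed pieces, I claim $qp.Spec_g(M)=qp\text{-}V_M^g(N_1)\sqcup qp\text{-}V_M^g(N_2)$ is also a separation. For the cover: for any $Q\in qp.Spec_g(M)$, the primeful property of $Q$ applied to the graded prime $p_Q=Gr((Q:_RM))$ produces a graded prime submodule $P_Q\supseteq Q$ with $(P_Q:_RM)=p_Q$, and whichever $V_G(N_i)$ contains $P_Q$ forces $Q$ into $qp\text{-}V_M^g(N_i)$. For disjointness: a hypothetical $Q$ in both varieties would push its $P_Q$ into $V_G(N_1)\cap V_G(N_2)=\emptyset$. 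For non-emptiness: pick $P\in V_G(N_i)$; the prime $(P:_RM)\supseteq Ann(M)$ lies in $qp\text{-}V_R^g(Ann(M))$, so Definition \ref{D3.9} yields $Q\in qp.Spec_g(M)$ with $Gr((Q:_RM))=(P:_RM)\supseteq(N_i:_RM)$, placing $Q$ in $qp\text{-}V_M^g(N_i)$. The step in which the primeful property of each individual $Q$ must be married to the quasi-primaryful hypothesis on $M$ is the main obstacle; all three bookkeeping checks (cover, disjointness, non-emptiness) hinge on this interplay.

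Finally, $(4)\Rightarrow(1)$ under the additional primeful hypothesis is essentially immediate: primeful makes $\psi:Spec_g(M)\to Spec_g(\overline{R})$ surjective, and by Remark \ref{R4.1}(1) the continuous surjective image of a connected space is connected.
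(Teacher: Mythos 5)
Your proof is correct and follows essentially the same route as the paper: it transfers (dis)connectedness through $Spec_{g}(\overline{R})$ via the continuous, surjective, open and closed maps $\phi ^{R}$ and $\varphi =\phi ^{R}\circ \psi ^{q}$ (Proposition \ref{P3.4}, Theorems \ref{T3.11} and \ref{T3.12}), and for (3)$\Rightarrow $(4) lifts a separation of $Spec_{g}(M)$ to one of $qp.Spec_{g}(M)$ by using the graded primeful property of each $Q$ to produce a graded prime $P_{Q}$ with $(P_{Q}:_{R}M)=Gr((Q:_{R}M))$, exactly as the paper does. The only cosmetic differences are that you obtain disjointness of the images from the saturation $A=(\phi ^{R})^{-1}(\phi ^{R}(A))$ where the paper instead argues that the image is a proper subset, and that you prove (4)$\Rightarrow $(1) directly from surjectivity of $\psi $ where the paper cites \cite[Corollary 3.8]{11}.
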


\begin{proof}
(1)$\Leftrightarrow $(2) Suppose that ($q.Spec_{g}(R)$, $q.\tau _{R}^{g}$)
is a connected space. By Proposition \ref{P3.4}(1), the map $\phi ^{R}$ is
surjective and continuous and so ($Spec_{g}(\overline{R})$, $\tau _{%
\overline{R}}^{g}$) is also a connected space. Conversely, suppose on the
contrary that ($q.Spec_{g}(R)$, $q.\tau _{R}^{g}$) is disconnected. Then
there exists a non-empty proper subset $W$ of $q.Spec_{g}(R)$ that is both
open and closed. By Proposition \ref{P3.4}(2), $\phi ^{R}(W)$ is a non-empty
subset of $Spec_{g}(\overline{R})$ that is both open and closed. To complete
the proof, it suffices to show that $\phi ^{R}(W)$ is a proper subset of $%
Spec_{g}(\overline{R})$ that in this case ($Spec_{g}(\overline{R})$, $\tau _{%
\overline{R}}^{g}$) is disconnected, a contradiction. Since $W$ is open, $%
W=q.Spec_{g}(\overline{R})-qp$-$V_{\overline{R}}^{g}(\overline{I})$ for some
graded ideal $I$ of $R$ containing $Ann(M)$. Thus $\phi
^{R}(W)=Spec_{g}(R)-qp$-$V_{\overline{R}}^{g}(\overline{I})$ by Proposition \ref{P3.4}(2). Therefore, if $\phi ^{R}(W)=Spec_{g}(\overline{R})$, then $qp$%
-$V_{\overline{R}}^{g}(\overline{I})=\phi $, and so $\overline{I}=\overline{R%
}$, i.e., $I=R$. It follows that $W=q.Spec_{g}(\overline{R})-qp$-$V_{%
\overline{R}}^{g}(\overline{R})=q.Spec_{g}(R)$ which is impossible. Thus $%
\phi ^{R}(W)$ is a proper subset of $q.Spec_{g}(R)$\textbf{.}\newline
(3)$\Rightarrow $(1) Follows since $\varphi =\phi ^{R}\circ \psi ^{q}$ is a
surjective and continuous map of the connected space ($qp.Spec_{g}(M)$, $%
q.\tau ^{g}$).\newline
(1)$\Rightarrow $(3) Assume that ($Spec_{g}(\overline{R})$, $\tau _{%
\overline{R}}^{g}$) is connected. If ($qp.Spec_{g}(M)$, $q.\tau ^{g}$) is
disconnected, then ($qp.Spec_{g}(M)$, $q.\tau ^{g}$) must contain a
non-empty proper subset $Y$ that is both open and closed. Accordingly, $%
\varphi (Y)$ is a non-empty subset of ($Spec_{g}(\overline{R})$, $\tau _{%
\overline{R}}^{g}$) that is both open and closed by Theorem \ref{T3.12}. To
complete the proof, it suffices to show that $\varphi (Y)$ is a proper
subset of $Spec_{g}(\overline{R})$ so that ($Spec_{g}(\overline{R})$, $\tau
_{\overline{R}}^{g}$) is disconnected, a contradiction.
Since $Y$ is open, $Y=q.Spec_{g}(M)-qp$-$V_{M}^{g}(K)$ for some graded
submodule $K$ of $M$ whence $\varphi (Y)=Spec_{g}(\overline{R})-V_{\overline{%
R}}^{g}(\overline{Gr((K:_{R}M))})$ by Theorem \ref{T3.12}. Therefore, if $%
\varphi (Y)=Spec_{g}(\overline{R})$, then $V_{\overline{R}}^{g}(\overline{%
Gr((K:_{R}M))})=\phi $, and so $\overline{Gr((K:_{R}M))}=\overline{R}$,
i.e., $K=M$. It follows that $Y=q.Spec_{g}(M)-qp$-$%
V_{M}^{g}(M)=q.Spec_{g}(M) $ which is impossible. Thus $\varphi (Y)$ is a
proper subset of $Spec_{g}(\overline{R})$.\newline
(3)$\Rightarrow $(4) Assume by way of contradiction that ($Spec_{g}(M)$, $%
\tau ^{g}$) is a disconnected space. Thus $%
Spec_{g}(M)=(Spec_{g}(M)-V_{G}(K_{1}))\cup (Spec_{g}(M)-V_{G}(K_{2}))$ with
\begin{equation*}
(Spec_{g}(M)-V_{G}(K_{1}))\cap (Spec_{g}(M)-V_{G}(K_{2}))=\phi ,
\end{equation*}%
for some non-empty closed subsets $V_{G}(K_{1})$ and $V_{G}(K_{2})$ of ($%
Spec_{g}(M)$, $\tau ^{g}$). We show that%
\begin{equation*}
qp.Spec_{g}(M)=(Spec_{g}(M)-V_{G}(K_{1}))\cup (Spec_{g}(M)-V_{G}(K_{2})).
\end{equation*}%
Suppose that $Q\in qp.Spec_{g}(M)$. Since $Gr_{M}(Q)\neq M$, there is $P\in
Spec_{g}(M)$ with $P\supseteq Q$. Now since%
\begin{equation*}
(P:_{R}M)\nsupseteq (K_{i}:_{R}M)\text{ for }i=1\text{ or }i=2,
\end{equation*}%
we have $(Gr_{M}(Q):_{R}M)\nsupseteq (K_{i}:_{R}M)$ for $i=1$ or $i=2$. Thus%
\begin{equation*}
Q\in (qp.Spec_{g}(M)-qp-V_{M}^{g}(K_{1}))\cup
(qp.Spec_{g}(M)-qp-V_{M}^{g}(K_{2})).
\end{equation*}%
Moreover,%
\begin{equation*}
(qp.Spec_{g}(M)-qp-V_{M}^{g}(K_{1}))\cap
(qp.Spec_{g}(M)-qp-V_{M}^{g}(K_{2}))=\phi ,
\end{equation*}%
because if%
\begin{equation*}
Q\in (qp.Spec_{g}(M)-qp-V_{M}^{g}(K_{1}))\cap
(qp.Spec_{g}(M)-qp-V_{M}^{g}(K_{2})),
\end{equation*}%
then%
\begin{equation*}
(Gr_{M}(Q):_{R}M)\nsupseteq (K_{i}:_{R}M)\text{ for }i=1,2.
\end{equation*}%
It follows that there is $P\in qp.Spec_{g}(M)$ such that%
\begin{equation*}
(P:_{R}M)\nsupseteq (Gr_{M}(K_{i}):_{R}M)\text{ for }i=1,2.
\end{equation*}%
This implies that%
\begin{equation*}
P\in (Spec_{g}(M)-V_{G}(K_{1}))\cap (Spec_{g}(M)-V_{G}(K_{2})),
\end{equation*}%
a contradiction. Therefore ($qp.Spec_{g}(M)$, $q.\tau ^{g}$) is not a
connected space.\newline
The last statement follows from \cite[Corollary 3.8]{11}.
\end{proof}

Let $Y$ be a subset of $qp.Spec_{g}(M)$ for a graded $R$-module $M$. We will
denote the intersection of all elements in $Y$ by $\Im (Y)$ and the closure
of $Y$ in $qp.Spec_{g}(M)$ with respect to the quasi-Zariski topology by $%
cl(Y)$.

\begin{theorem}
\label{T4.2}Let $M$ be a graded $R$-module, $Y\subseteq q.Spec_{g}(M)$ and
let $Q\in q.Spec_{g}^{p}(M)$. Then we have the following:\newline
(1) $qp$-$V_{M}^{g}(\Im (Y))=cl(Y)$. In particular, $cl(\{Q\})=qp$-$%
V_{M}^{g}(Q)$.\newline
(2) If $(0)\in Y$, then $Y$ is dense in ($qp.Spec_{g}(M)$, $q.\tau ^{g}$),
that is, $cl(Y)=qp.Spec_{g}(M)$.\newline
(3) The set $\{Q\}$ is closed in ($qp.Spec_{g}(M)$, $q.\tau ^{g}$) if and
only if $p$ is a maximal element in $\{Gr((K:_{R}M))\mid K\in
q.Spec_{g}(M)\} $, and $q.Spec_{g}^{p}(M)=\{Q\}$.\newline
(4) If $\{Q\}$ is closed in ($qp.Spec_{g}(M)$, $q.\tau ^{g}$), then $Q$ is a
maximal element of $q.Spec_{g}(M)$.
\end{theorem}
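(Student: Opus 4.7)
The plan is to establish part (1) first since parts (2)--(4) all follow from the identification $cl(\{Q\}) = qp\text{-}V_{M}^{g}(Q)$. For (1), I would show that $qp\text{-}V_{M}^{g}(\Im(Y))$ is the smallest closed set containing $Y$. The inclusion $Y \subseteq qp\text{-}V_{M}^{g}(\Im(Y))$ is immediate: any $Q \in Y$ satisfies $Q \supseteq \Im(Y)$, hence $(Q:_{R}M) \supseteq (\Im(Y):_{R}M)$, and passing to the graded radical gives $Q \in qp\text{-}V_{M}^{g}(\Im(Y))$. For minimality, any closed subset of $qp.Spec_{g}(M)$ containing $Y$ has the form $qp\text{-}V_{M}^{g}(K)$; Proposition \ref{P3.2}(6) then gives $Gr((K:_{R}M)) \subseteq Gr((\Im(Y):_{R}M))$, and this inequality propagates through the definition of $qp\text{-}V_{M}^{g}$ to yield $qp\text{-}V_{M}^{g}(\Im(Y)) \subseteq qp\text{-}V_{M}^{g}(K)$. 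Specializing $Y = \{Q\}$ gives the ``in particular'' clause. Part (2) is then immediate: if $(0) \in Y$, then $\Im(Y) = (0)$, so $cl(Y) = qp\text{-}V_{M}^{g}(0) = qp.Spec_{g}(M)$ by Theorem \ref{T3.1}(1).

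For (3), combining (1) with the definition of a closed point, $\{Q\}$ is closed exactly when $qp\text{-}V_{M}^{g}(Q) = \{Q\}$; unfolding, this says every $Q' \in qp.Spec_{g}(M)$ with $Gr((Q':_{R}M)) \supseteq Gr((Q:_{R}M)) = p$ already equals $Q$. Splitting into the two cases $Gr((Q':_{R}M)) = p$ and $Gr((Q':_{R}M)) \supsetneq p$ recovers exactly the two stated conditions: the first case forces $q.Spec_{g}^{p}(M) = \{Q\}$, and ruling out the strict-containment case for every $Q'$ is precisely the maximality of $p$ in the family $\{Gr((K:_{R}M)) \mid K \in q.Spec_{g}(M)\}$. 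The converse implication runs the same argument in reverse.

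For (4), assume $\{Q\}$ is closed and $Q \subseteq Q'$ for some $Q' \in q.Spec_{g}(M)$. Then $(Q:_{R}M) \subseteq (Q':_{R}M)$, so $Gr((Q':_{R}M)) \supseteq Gr((Q:_{R}M)) = p$, which places $Q' \in qp\text{-}V_{M}^{g}(Q) = cl(\{Q\}) = \{Q\}$; thus $Q' = Q$, proving $Q$ is maximal among graded quasi-primary submodules. The main technical obstacle is the minimality step in (1): one must recognize that Proposition \ref{P3.2}(6) exactly encodes the passage between set-theoretic containment in $qp.Spec_{g}(M)$ and the inclusion of graded radicals of colon ideals, after which each of (2)--(4) reduces to a short definition-chase.
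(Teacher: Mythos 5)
Your proposal is correct and follows essentially the same route as the paper: establish that $qp$-$V_{M}^{g}(\Im(Y))$ is the smallest closed set containing $Y$ (the paper re-derives the content of Proposition \ref{P3.2}(6) inline rather than citing it), then obtain (2)--(4) by specializing to $Y=\{Q\}$ and unwinding $cl(\{Q\})=qp$-$V_{M}^{g}(Q)$. Your case-split in (3) is, if anything, slightly more explicit than the paper's forward direction, which only spells out $q.Spec_{g}^{p}(M)=\{Q\}$; no substantive differences otherwise.
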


\begin{proof}
(1) Suppose $L\in Y$. Then $\Im (Y)\subseteq L$. Therefore $%
Gr((L:_{R}M))\supseteq Gr((\xi (Y):_{R}M))$. Thus $L\in qp$-$V_{M}^{g}(\Im
(Y $ $))$ and so $Y\subseteq qp$-$V_{M}^{g}(\Im (Y$ $))$. Next, let $qp$-$%
V_{M}^{g}(K)$ be any closed subset of ($qp.Spec_{g}(M)$, $q.\tau ^{g}$)
containing $Y$. Then $Gr((L:_{R}M))\supseteq Gr((K:_{R}M))$ for every $L\in
Y $ so that $Gr((\Im (Y):_{R}M))\supseteq Gr((K:_{R}M))$. Hence, for every $%
P\in qp$-$V_{M}^{g}(\Im (Y))$, $Gr((P:_{R}M))\supseteq Gr((\Im
(Y):_{R}M))\supseteq Gr((K:_{R}M))$. Then $qp$-$V_{M}^{g}(\Im (Y$ $%
))\subseteq qp$-$V_{M}^{g}(K)$. Thus $qp$-$V_{M}^{g}(\Im (Y$ $))$ is the
smallest closed subset of ($qp.Spec_{g}(M)$, $q.\tau ^{g}$) containing $Y$,
hence $qp$-$V_{M}^{g}(\Im (Y$ $))=cl(Y)$.\newline
(2) Is trivial by (1)\textbf{, }for the last statement $cl(Y)=qp$-$%
V_{M}^{g}(\Im (Y))=qp$-$V_{M}^{g}(Gr_{M}(0))=qp.Spec_{g}(M)$.\newline
(3) Suppose that $\{Q\}$ is closed. Then $\{Q\}=qp$-$V_{M}^{g}(Q)$ by (1).
Let $K\in q.Spec_{g}(M)$ such that $Gr((K:_{R}M))\supseteq p=Gr((Q:_{R}M))$.
Hence, $K\in qp$-$V_{M}^{g}(Q)=\{Q\}$, and so $q.Spec_{g}^{p}(M)=\{Q\}$.
Conversely, assume that the hypothesis are hold. Let $K\in cl(\{Q\})$. Hence
by (1), $Gr((K:_{R}M))\supseteq Gr((Q:_{R}M))$. Thus, $%
Gr((K:_{R}M))=Gr((Q:_{R}M))=p$ and therefore $Q=K$. This yields $%
cl(\{Q\})=\{Q\}$.\newline
(4) Suppose $P\in q.Spec_{g}(M)$ such that $P\supseteq Q$. Then $%
Gr((P:_{R}M))\supseteq Gr((Q:_{R}M))$. i.e., $P\in qp$-$V_{M}^{g}(Q)=cl(\{Q%
\})=\{Q\}$. Hence, $P=Q$, and so $Q$ is a maximal element of $q.Spec_{g}(M)$.%
\newline
\end{proof}

A topological space $(X$, $\tau )$ is said to be a $T_{0}$-space if for each
pair of distinct points $a$, $b$ in $X$, either there exists an open set
containing $a$ and not $b$, or there exists an open set containing $b$ and
not $a$. It has been shown that a topological space is $T_{0}$ if and only
if the closures of distinct points are distinct, (see \cite{12}).

\begin{theorem}
\label{T4.3}Let $M$ be a graded $R$-module and let $Q$, $P\in qp.Spec_{g}(M)$%
. Then the following statements are equivalent:\newline
(1) ($qp.Spec_{g}(M)$, $q.\tau ^{g}$) is a $T_{0}$-space\newline
(2) If $qp$-$V_{M}^{g}(Q)=qp$-$V_{M}^{g}(P)$, then $Q=P$.\newline
(3) $\mid qp.Spec_{g}^{p}(M)\mid \leq 1$ for every $p\in Spec_{g}(R)$.%
\newline
(4) $\varphi $ is injective.
\end{theorem}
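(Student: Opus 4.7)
The plan is to observe that the equivalences \mbox{(2)$\Leftrightarrow$(3)$\Leftrightarrow$(4)} have already been proved in Theorem \ref{T3.7}, so the only new content is to tie in condition (1). I will do this by going \mbox{(1)$\Leftrightarrow$(2)} directly, using the standard characterization of $T_{0}$-spaces in terms of closures of points, together with the explicit description of $cl(\{Q\})$ provided by Theorem \ref{T4.2}(1).

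More precisely, recall that a topological space is $T_{0}$ if and only if distinct points have distinct closures. By Theorem \ref{T4.2}(1) applied to singletons, for every $Q\in qp.Spec_{g}(M)$ we have
\[
cl(\{Q\})=qp\text{-}V_{M}^{g}(Q).
\]
Hence, assuming (1), if $Q,P\in qp.Spec_{g}(M)$ satisfy $qp\text{-}V_{M}^{g}(Q)=qp\text{-}V_{M}^{g}(P)$, then $cl(\{Q\})=cl(\{P\})$, and the $T_{0}$ property forces $Q=P$; this gives \mbox{(1)$\Rightarrow$(2)}. Conversely, assuming (2), if $Q\neq P$, then by (2) we cannot have $qp\text{-}V_{M}^{g}(Q)=qp\text{-}V_{M}^{g}(P)$, so $cl(\{Q\})\neq cl(\{P\})$; hence one of the closures fails to contain the other point, and its complement is an open set separating $Q$ from $P$ (or $P$ from $Q$) in the $T_{0}$ sense. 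This yields \mbox{(2)$\Rightarrow$(1)}.

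Finally, I would close the loop by citing Theorem \ref{T3.7} for the remaining equivalences \mbox{(2)$\Leftrightarrow$(3)$\Leftrightarrow$(4)}. There is no real obstacle here; the only delicate point is to make sure the identification $cl(\{Q\})=qp\text{-}V_{M}^{g}(Q)$ is invoked correctly (it requires $Q\in qp.Spec_{g}(M)$, which is part of the hypothesis of the theorem), and to phrase the $T_{0}$ separation in terms of open sets rather than closures when deducing \mbox{(2)$\Rightarrow$(1)}, so the argument is complete.
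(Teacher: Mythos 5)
Your proposal is correct and matches the paper's own (very terse) proof, which simply cites Theorem \ref{T3.7} for the equivalences (2)$\Leftrightarrow$(3)$\Leftrightarrow$(4) and Theorem \ref{T4.2}(1) together with the closure characterization of $T_{0}$-spaces for the link to (1). You have merely written out the details that the paper leaves implicit; the route is the same.
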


\begin{proof}
follows from Theorem \ref{T3.7} and Theorem \ref{T4.2}(1).
\end{proof}

A topological space $(X$, $\tau )$ is called a $T_{1}$-space if every
singleton set $\{x\}$ is closed in $(X$, $\tau )$. Clearly every $T_{1}$%
-space is a $T_{0}$-space, (see \cite{12}).

\begin{theorem}
\label{C4.4}Let $M$ be a graded $R$-module. Then we have the following:%
\newline
(1) ($qp.Spec_{g}(M)$, $q.\tau ^{g}$) is a $T_{1}$-space if and only if ($%
qp.Spec_{g}(M)$, $q.\tau ^{g}$) is a $T_{0}$-space and for every element $%
Q\in q.Spec_{g}(M)$, $Gr((Q:_{R}M))$ is a maximal element in $%
\{Gr((K:_{R}M))\mid K\in q.Spec_{g}(M)\}$.\newline
(2) ($qp.Spec_{g}(M)$, $q.\tau ^{g}$) is a $T_{1}$-space if and only if ($%
qp.Spec_{g}(M)$, $q.\tau ^{g}$) is a $T_{0}$-space and every graded
quasi-primary submodule of $M$ satisfying the graded primeful property is a
maximal element of $q.Spec_{g}(M)$.\newline
(3) Let $(0)\in q.Spec_{g}(M)$. Then ($qp.Spec_{g}(M)$, $q.\tau ^{g}$) is a $%
T_{1}$-space if and only if $(0)$ is the only graded quasi-primary submodule
of $M$ satisfying the graded primeful property.\newline
(4) If ($qp.Spec_{g}(M)$, $q.\tau ^{g}$) is a $T_{0}$-space and $%
qp.Spec_{g}(M)=Max_{g}(M)$ then ($qp.Spec_{g}(M)$, $q.\tau ^{g}$) is a $%
T_{1} $-space.
\end{theorem}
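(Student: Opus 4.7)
My overall strategy is to reduce all four parts to Theorems \ref{T4.2} and \ref{T4.3}. Recall that Theorem \ref{T4.2}(3) characterises closed singletons via two conditions: (i) $p = Gr((Q:_R M))$ is maximal in $\{Gr((K:_R M)) \mid K \in q.Spec_g(M)\}$, and (ii) $q.Spec_g^p(M) = \{Q\}$. The $T_1$-axiom is that every singleton is closed, so each part of the theorem reduces to analysing when (i) and (ii) hold simultaneously for every $Q$.

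For part (1), I would note that (ii) holding for every $Q$ is the same as $|q.Spec_g^p(M)| \leq 1$ for every $p \in Spec_g(R)$, which by Theorem \ref{T4.3} is equivalent to $T_0$. Condition (i) for every $Q$ is precisely the stated maximality hypothesis, so the equivalence in (1) follows at once.

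For part (2), the forward direction is immediate, since $T_1 \Rightarrow T_0$ and Theorem \ref{T4.2}(4) already promotes closedness of $\{Q\}$ to maximality of $Q$ in $q.Spec_g(M)$. For the converse I would invoke part (1) and show that under $T_0$ the conditions ``$Gr((Q:_R M))$ maximal in the set'' and ``$Q$ maximal in $q.Spec_g(M)$'' coincide for every $Q$. One direction is routine: if $Gr((Q:_R M))$ is maximal and $K \supseteq Q$ with $K \in q.Spec_g(M)$, then $Gr((K:_R M)) \supseteq Gr((Q:_R M))$, and maximality together with injectivity of $\varphi$ (Theorem \ref{T4.3}) forces $K = Q$. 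The other direction is the main obstacle and would proceed in two steps: first, maximality of $Q$ in $q.Spec_g(M)$ forces $Q$ to be graded prime, because $Gr_M(Q)$ is a graded prime submodule by \cite{2} and inherits the primeful property from $Q$ (any prime submodule above $Q$ automatically contains $Gr_M(Q)$), so $Gr_M(Q) \in q.Spec_g(M)$ and maximality forces $Q = Gr_M(Q)$; second, if $Gr((K:_R M)) \supseteq Gr((Q:_R M))$ then both radicals equal the colons, and primeful of $Q$ yields a graded prime $P \supseteq Q$ with $(P:_R M) = (K:_R M)$, from which maximality of $Q$ gives $P = Q$ and hence $(K:_R M) = (Q:_R M)$. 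The most delicate step is arguing that $P \in q.Spec_g(M)$, i.e.\ that $P$ itself inherits the graded primeful property; this is where I expect the proof to need the most care.

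Parts (3) and (4) then follow cleanly from (2). For (3), if $(0) \in q.Spec_g(M)$ and the space is $T_1$, then by (2) the submodule $(0)$ is maximal in $q.Spec_g(M)$, and since every member of $q.Spec_g(M)$ contains $(0)$ this forces $q.Spec_g(M) = \{(0)\}$; the converse is trivial because a one-point space is $T_1$. Alternatively, one may compute $cl(\{(0)\}) = qp$-$V_M^g((0)) = qp.Spec_g(M)$ directly from Theorem \ref{T4.2}(1), using that $Gr((K:_R M)) \supseteq Ann(M)$ always. For (4), the hypothesis $qp.Spec_g(M) = Max_g(M)$ makes every $Q \in q.Spec_g(M)$ a maximal graded submodule of $M$, hence a fortiori maximal in the smaller set $q.Spec_g(M)$, so part (2) delivers $T_1$.
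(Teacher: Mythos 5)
Your treatment of parts (1), (3) and the forward implication in (2) is correct and matches the paper's (much terser) proof: the paper disposes of (1) by citing Theorem \ref{T4.2}(3) together with Theorem \ref{T4.3}, of the forward half of (2) by Theorem \ref{T4.2}(4), and of (3) and (4) by reduction to (2), essentially as you do.

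The converse implication in (2) is where the real content lies, and your argument there has two unclosed steps. First, your Step 1 asserts that $Gr_{M}(Q)$ is a graded prime submodule; the reference the paper relies on (\cite[Lemma 2.3]{2}) only gives that the ideal $(Gr_{M}(Q):_{R}M)$ is graded prime, which is weaker, so primality of the submodule $Gr_{M}(Q)$ would need a separate proof (and your argument does not actually need Step 1). Second, and more seriously, the step you yourself flag is a genuine gap: from $Gr((K:_{R}M))\supseteq Gr((Q:_{R}M))$ the graded primeful property of $Q$ produces a graded prime submodule $P\supseteq Q$ with $(P:_{R}M)=Gr((K:_{R}M))$, but to invoke maximality of $Q$ in $q.Spec_{g}(M)$ you must know $P\in q.Spec_{g}(M)$, i.e.\ that $P$ itself satisfies the graded primeful property, and this does not follow from the primefulness of $Q$ (a prime $P'\supseteq Q$ lying over a prime ideal above $(P:_{R}M)$ need not contain $P$). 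To be fair, the paper's own proof of this direction is no more complete: it simply writes ``by maximality of $K$ we have $Gr((Q:_{R}M))=Gr((K:_{R}M))$'' with no justification, which is precisely the implication that has to be established. So you have correctly isolated where the work must happen, but neither your proposal nor the paper closes it; as written, your argument for the converse of (2), and hence for (4), which depends on it, is incomplete.
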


\begin{proof}
(1) The result is easy to check from Theorem \ref{T4.2}(3) and Theorem \ref{T4.3}.\newline
(2) The sufficiency is trivial by Theorem \ref{T4.2}(4). Conversely, suppose
$Q$, $K\in q.Spec_{g}(M)$ such that $Q\in cl(\{K\})=qp$-$V_{M}^{g}(K)$. Thus
$Gr((Q:_{R}M))\supseteq Gr((K:_{R}M))$. Since $Q$ satisfies the graded
primeful property, $Gr((Q:_{R}M))$ is a proper graded ideal of $R$ and hence
by maximality of $K$ we have $Gr((Q:_{R}M))=Gr((K:_{R}M))$, i.e., $qp$-$%
V_{M}^{g}(Q)=qp$-$V_{M}^{g}(K)$. Now, by Theorem \ref{T3.7}, we conclude
that $Q=K$. Thus $cl(\{K\})=\{K\}$, i.e., every singleton subset of ($%
qp.Spec_{g}(M)$, $q.\tau ^{g}$) is closed. So, ($qp.Spec_{g}(M)$, $q.\tau
^{g}$) is a $T_{1}$-space.\newline
(3) By part (2).\newline
(4) By part (2).
\end{proof}

\bigskip

Let $M$ be a graded $R$-module. A graded submodule $K$ of $M$ is said to
be \textit{a graded maximal submodule} if $K\neq M$\ and there is no graded
submodule $P$ of $M$ such that $K$ $\subset P\subset M$. We will denote
the set of all graded maximal submodules of $M$ by $Max_{g}(M)$, (see \cite{15}).\newline
In the next Theorem we study the relation between the $T_{1}$-space property
and the graded maximal submodules of a graded $R$-module $M$.

\begin{theorem}
\label{T4.5}Let $M$ be a finitely generated graded $R$-module. Then ($%
qp.Spec_{g}(M)$, $q.\tau ^{g}$) is a $T_{1}$-space if and only if $%
qp.Spec_{g}(M)=Max_{g}(M)$. In this case, $%
qp.Spec_{g}(M)=Spec_{g}(M)=Max_{g}(M)$.
\end{theorem}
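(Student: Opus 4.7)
The plan is to reduce this equivalence to Theorem~\ref{C4.4}, using two consequences of the finite generation of $M$: (a) every proper graded submodule of $M$ is contained in a graded maximal submodule, and (b) every graded prime submodule $P$ of $M$ satisfies the graded primeful property (because $M/P$ is a finitely generated faithful graded module over $R/(P:_{R}M)$, so the natural map on its graded prime spectrum is surjective). With (a) and (b) in hand the argument breaks into the two implications, plus the final chain of equalities.

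For the forward direction, suppose $(qp.Spec_{g}(M), q.\tau^{g})$ is $T_{1}$. By Theorem~\ref{C4.4}(2) every $Q \in qp.Spec_{g}(M)$ is a maximal element of the poset $(qp.Spec_{g}(M), \subseteq)$. Using (a) applied to the proper graded submodule $Q$, choose a graded maximal submodule $N$ of $M$ with $Q \subseteq N$. Then $M/N$ is graded simple, so $N$ is graded prime (hence graded quasi-primary) and $(N:_{R}M)$ is a graded maximal ideal. Since the only graded prime of $R$ containing $(N:_{R}M)$ is $(N:_{R}M)$ itself, the primeful property for $N$ is witnessed by $N$, and therefore $N \in qp.Spec_{g}(M)$. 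Maximality of $Q$ forces $Q = N$, hence $Q \in Max_{g}(M)$. The same observation applied to an arbitrary graded maximal submodule gives $Max_{g}(M) \subseteq qp.Spec_{g}(M)$, so $qp.Spec_{g}(M) = Max_{g}(M)$.

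For the reverse direction, assume $qp.Spec_{g}(M) = Max_{g}(M)$. By Theorem~\ref{C4.4}(4) it suffices to prove the space is $T_{0}$, and by Theorem~\ref{T4.3} this reduces to showing that any $Q, P \in Max_{g}(M)$ with $Gr((Q:_{R}M)) = Gr((P:_{R}M))$ must coincide. Set $m = (Q:_{R}M) = (P:_{R}M)$, a common graded maximal ideal (equal to its own graded radical). The key step is to check $Q \cap P \in qp.Spec_{g}(M)$: for the quasi-primary condition, if $r \in h(R)$, $x \in h(M)$ satisfy $rx \in Q \cap P$ and $r \notin m$, then primeness of $Q$ and $P$ yields $x \in Q$ and $x \in P$, hence $x \in Q \cap P$; for the primeful property, the only graded prime of $R$ above $(Q \cap P :_{R}M) = m$ is $m$ itself, and $Q \supseteq Q \cap P$ realizes it. Hence $Q \cap P \in Max_{g}(M)$; since $Q \cap P \subseteq Q \subsetneq M$ and $Q \cap P$ is a graded maximal submodule, we get $Q \cap P = Q$, so $Q \subseteq P$, and maximality of $Q$ gives $Q = P$.

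For the closing equality $qp.Spec_{g}(M) = Spec_{g}(M) = Max_{g}(M)$, the inclusion $Max_{g}(M) \subseteq Spec_{g}(M)$ is standard (simple graded quotients give graded prime submodules), while $Spec_{g}(M) \subseteq qp.Spec_{g}(M)$ combines the fact that every graded prime is graded quasi-primary with (b). The main technical obstacle is the simultaneous verification of the quasi-primary property and the primeful property of the intersection $Q \cap P$ in the $T_{0}$ step; the remaining steps are routine bookkeeping with finite generation and the cited results.
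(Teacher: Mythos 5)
Your proposal is correct and follows essentially the same route as the paper: the forward direction uses finite generation to embed $Q$ in a graded maximal submodule (which lies in $qp.Spec_{g}(M)$ because it is graded prime with graded maximal colon ideal), and the reverse direction hinges on showing $Q\cap P\in qp.Spec_{g}(M)=Max_{g}(M)$ to force $Q=P$. The only differences are cosmetic — you route the reverse direction through Theorem \ref{T4.3} and Theorem \ref{C4.4}(4) instead of arguing directly that singletons are closed, and you spell out the quasi-primary and primeful verifications for $Q\cap P$ that the paper merely asserts.
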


\begin{proof}
Suppose that ($qp.Spec_{g}(M)$, $q.\tau ^{g}$) is a $T_{1}$-space. Then
every singleton subset of ($qp.Spec_{g}(M)$, $q.\tau ^{g}$) is closed.
Assume that $Q\in qp.Spec_{g}(M)$. Hence, by Theorem \ref{T4.2}(1), $qp$-$%
V_{M}^{g}(Q)=cl(\{Q\})=\{Q\}$. Since $M$ is finitely generated, there exists
$K\in Max_{g}(M)$ such that $Q\subseteq K$. It follows that $%
(Q:_{R}M)\subseteq (K:_{R}M)$ and thus $K\in qp$-$V_{M}^{g}(Q)=\{Q\}$, since
$K$ is a graded prime submodule of $M$. Hence $K=Q$, and so $Q\in Max_{g}(M)$%
. Therefore $qp.Spec_{g}(M)\subseteq Max_{g}(M)$. The reverse inclusion is
clear. Conversely, suppose that $\{Q\}$ is a singleton subset of $%
qp.Spec_{g}(M)$. If $P\in qp$-$V_{M}^{g}(Q)$, then $Gr((P:_{R}M))\supseteq
Gr((Q:_{R}M))$. Since $(Q:_{R}M)$ and $(P:_{R}M)$ are graded maximal ideals
of $R$, $(Q:_{R}M)=(P:_{R}M)$. It follows that $Q\cap P\in qp.Spec_{g}(M)$,
and so $Q\cap P\in Max_{g}(M)$. Hence $Q=P$, and so $qp$-$V_{M}^{g}(Q)=\{Q\}$%
. Therefore ($qp.Spec_{g}(M)$, $q.\tau ^{g}$) is a $T_{1}$-space. The final
claim now follows from the fact that $Max(M)\subseteq Spec_{g}(M)\subseteq
qp.Spec_{g}(M)$.
\end{proof}

\bigskip

A topological space $(X$, $\tau )$ is called irreducible if $X\neq \phi $
and if every pair of non-empty open sets in $(X$, $\tau )$ intersect. A
subset $A$ of a topological space $(X$, $\tau )$ is irreducible if for every
pair of closed subsets $A_{i}$ $(i=1$, $2)$ of $X$ with $A\subseteq
A_{1}\cup A_{2}$, we have $A\subseteq A_{1}$ or $A\subseteq A_{2}$. An
irreducible component of a topological space $A$ is a maximal irreducible
subset of $(X$, $\tau )$. A singleton subset and its closure in ($%
qp.Spec_{g}(M)$, $q.\tau ^{g}$) are both irreducible (see \cite{12}).

\begin{theorem}
\label{T4.6}Let $M$ be a graded $R$-module. Then, $qp$-$V_{M}^{g}(Q)$ is an
irreducible closed subset of ($qp.Spec_{g}(M)$, $q.\tau ^{g}$) for every
graded quasi-primary submodule $Q$ of $M$ satisfying the graded primeful
property.
\end{theorem}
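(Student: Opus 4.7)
The plan is straightforward: identify $qp$-$V_{M}^{g}(Q)$ as the closure of the singleton $\{Q\}$ in $(qp.Spec_{g}(M),q.\tau^{g})$, and then invoke the elementary topological fact that the closure of an irreducible set is irreducible. Since $Q$ is a graded quasi-primary submodule of $M$ satisfying the primeful property, we have $Q\in qp.Spec_{g}(M)$, and setting $p=Gr((Q:_{R}M))$ (which is a graded prime ideal of $R$ by Remark \ref{R3.6}(1)), we get $Q\in qp.Spec_{g}^{p}(M)$. Thus Theorem \ref{T4.2}(1) applies and yields $cl(\{Q\})=qp$-$V_{M}^{g}(Q)$. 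Closedness of $qp$-$V_{M}^{g}(Q)$ is automatic from Theorem \ref{T3.1} and the definition of the quasi-Zariski topology.

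For irreducibility I would then use the standard argument: a singleton is trivially irreducible, and if $Y$ is irreducible in a topological space $X$ and $cl(Y)\subseteq F_{1}\cup F_{2}$ with $F_{1},F_{2}$ closed in $X$, then $Y\subseteq F_{1}\cup F_{2}$, so $Y\subseteq F_{i}$ for some $i$, and closing gives $cl(Y)\subseteq F_{i}$. Applying this with $Y=\{Q\}$ gives that $qp$-$V_{M}^{g}(Q)=cl(\{Q\})$ is irreducible.

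If a self-contained argument is preferred, one can bypass Theorem \ref{T4.2} and argue directly. Suppose $qp$-$V_{M}^{g}(Q)\subseteq qp$-$V_{M}^{g}(K_{1})\cup qp$-$V_{M}^{g}(K_{2})$. Since $Q\in qp$-$V_{M}^{g}(Q)$ (trivially, as $Gr((Q:_{R}M))\supseteq Gr((Q:_{R}M))$), the point $Q$ must lie in one of the two right-hand sets, say $Q\in qp$-$V_{M}^{g}(K_{1})$; this gives $Gr((Q:_{R}M))\supseteq Gr((K_{1}:_{R}M))$. Then for any $P\in qp$-$V_{M}^{g}(Q)$, transitivity of containment of graded radicals gives $Gr((P:_{R}M))\supseteq Gr((Q:_{R}M))\supseteq Gr((K_{1}:_{R}M))$, so $P\in qp$-$V_{M}^{g}(K_{1})$, and hence $qp$-$V_{M}^{g}(Q)\subseteq qp$-$V_{M}^{g}(K_{1})$.

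There is no serious obstacle here; the only thing to verify with any care is that $Q$ itself qualifies as an element of $qp.Spec_{g}(M)$ (so that $qp$-$V_{M}^{g}(Q)$ is nonempty and contains $Q$ as a generic point), which is exactly the content of the primeful hypothesis. The result is in essence a reformulation of the fact that every closure of a point in a $T_{0}$-style spectral setting is irreducible.
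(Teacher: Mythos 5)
Your proposal is correct and follows essentially the same route as the paper: the paper's proof is just the citation of Theorem \ref{T4.2}(1), i.e.\ $qp$-$V_{M}^{g}(Q)=cl(\{Q\})$, combined with the standard fact (stated just before the theorem) that the closure of a singleton is irreducible. Your additional self-contained transitivity argument is a correct unwinding of the same idea.
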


\begin{proof}
By Theorem \ref{T4.2}(1).
\end{proof}

\bigskip

The next result is a good application of the quasi-Zariski topology on
graded modules. Indeed, we show a link between the irreducible closed
subsets of quasi-Zariski topology over $qp.Spec_{g}(M)$ and the graded quasi
primary submodules of a the graded $R$-module.

\begin{theorem}
\label{T4.7}Let $M$ be a graded $R$-module and $Y\subseteq q.Spec_{g}(M)$.
If $\Im (Y)$ is a graded quasi-primary submodule of $M$, then $Y$ is an
irreducible space. The converse is true, if $M$ is a graded multiplication
module and $\Im (Y)$ satisfies the graded primeful property.
\end{theorem}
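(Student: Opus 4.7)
The plan is to use Proposition~\ref{P3.2}(6) as the main bridge, which translates $Y \subseteq qp\text{-}V_M^g(K)$ into the ideal containment $Gr((K:_R M)) \subseteq \mathfrak{p}$, where I set $\mathfrak{p} := Gr((\Im(Y):_R M))$.

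For the forward direction, assume $\Im(Y)$ is graded quasi-primary, so that $\mathfrak{p}$ is graded prime by \cite[Lemma 2.3]{2}. If $Y \subseteq qp\text{-}V_M^g(K_1) \cup qp\text{-}V_M^g(K_2) = qp\text{-}V_M^g(K_1 \cap K_2)$ by Theorem~\ref{T3.1}(3), then Proposition~\ref{P3.2}(6) gives $Gr((K_1 \cap K_2:_R M)) \subseteq \mathfrak{p}$. Using $(K_1 \cap K_2:_R M) = (K_1:_R M) \cap (K_2:_R M) \supseteq (K_1:_R M)(K_2:_R M)$, primeness of $\mathfrak{p}$ puts one $(K_i:_R M)$ inside $\mathfrak{p}$, and Proposition~\ref{P3.2}(6) reads this back as $Y \subseteq qp\text{-}V_M^g(K_i)$. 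Hence $Y$ is irreducible.

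For the converse, assume $Y$ is irreducible, $M$ is graded multiplication, and $\Im(Y)$ is primeful. I argue the contrapositive: suppose $\mathfrak{p}$ is not graded prime and pick homogeneous $a, b \in h(R)$ with $ab \in \mathfrak{p}$ but $a, b \notin \mathfrak{p}$. Set $K_1 := aM + \Im(Y)$ and $K_2 := bM + \Im(Y)$. Since $a \in (K_1:_R M) \subseteq Gr((K_1:_R M))$ while $a \notin \mathfrak{p}$, Proposition~\ref{P3.2}(6) gives $Y \not\subseteq qp\text{-}V_M^g(K_1)$; analogously for $K_2$. To contradict irreducibility I must still prove $Y \subseteq qp\text{-}V_M^g(K_1 \cap K_2)$.

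The main obstacle is the identity $Gr((K_i:_R M)) = Gr(Ra + \mathfrak{p})$ (resp.\ $Gr(Rb + \mathfrak{p})$). The $\supseteq$ direction is immediate from $(K_1:_R M) \supseteq Ra + (\Im(Y):_R M)$. For $\subseteq$, the primeful property of $\Im(Y)$ is crucial: given $r \notin Gr(Ra + \mathfrak{p})$, pick a graded prime $\mathfrak{q} \supseteq Ra + \mathfrak{p}$ with $r \notin \mathfrak{q}$; primeful of $\Im(Y)$ produces a graded prime submodule $P \supseteq \Im(Y)$ with $(P:_R M) = \mathfrak{q}$; since $a \in \mathfrak{q}$ forces $aM \subseteq P$, we have $K_1 \subseteq P$ and $Gr((K_1:_R M)) \subseteq \mathfrak{q}$, so $r \notin Gr((K_1:_R M))$. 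The multiplication hypothesis on $M$ enters through $\Im(Y) = (\Im(Y):_R M)M$ together with Proposition~\ref{P3.2}(2)--(3), which keep these presentations of the $K_i$ aligned with ideals of $R$. Assembling the pieces,
$$Gr((K_1 \cap K_2:_R M)) = Gr((K_1:_R M)) \cap Gr((K_2:_R M)) = Gr\bigl((Ra + \mathfrak{p})(Rb + \mathfrak{p})\bigr) \subseteq Gr(Rab + \mathfrak{p}) \subseteq \mathfrak{p}$$
since $ab \in \mathfrak{p}$. By Proposition~\ref{P3.2}(6) this gives $Y \subseteq qp\text{-}V_M^g(K_1 \cap K_2)$, contradicting the irreducibility of $Y$.
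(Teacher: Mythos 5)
Your proof is correct, and while the forward direction is essentially the paper's argument verbatim (reduce a cover $Y\subseteq qp$-$V_{M}^{g}(K_{1})\cup qp$-$V_{M}^{g}(K_{2})=qp$-$V_{M}^{g}(K_{1}\cap K_{2})$ to $Gr((K_{1}\cap K_{2}:_{R}M))\subseteq Gr((\Im(Y):_{R}M))$ via Proposition~\ref{P3.2}(6), then use primeness), your converse takes a genuinely different route. The paper argues pointwise: given $ab\in(\Im(Y):_{R}M)$ with $a,b\notin Gr((\Im(Y):_{R}M))$, every $Q\in Y$ satisfies $ab\in Gr((Q:_{R}M))$, which is graded prime since $Q$ is quasi-primary, so each $Q$ lies in $qp$-$V_{M}^{g}(RaM)$ or $qp$-$V_{M}^{g}(RbM)$; irreducibility pushes $Y$ into one of these and Proposition~\ref{P3.2}(6) gives the contradiction. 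You instead build the cover from $K_{1}=aM+\Im(Y)$ and $K_{2}=bM+\Im(Y)$ and verify it by the ideal identity $Gr((K_{i}:_{R}M))=Gr(Ra+\mathfrak{p})$, whose nontrivial inclusion you extract from the graded primeful property of $\Im(Y)$. Your version has the merit of showing exactly where the primeful hypothesis does real work (the paper's converse never invokes it explicitly), at the cost of a longer ideal computation; the paper's version is shorter because it exploits primeness of $Gr((Q:_{R}M))$ for the individual members $Q$ of $Y$ rather than manipulating $\mathfrak{p}$ itself. One point to tighten: what your contrapositive actually establishes is that $\mathfrak{p}=Gr((\Im(Y):_{R}M))$ is graded prime, i.e.\ that $(\Im(Y):_{R}M)$ is a graded quasi-primary ideal; the multiplication hypothesis is needed precisely to upgrade this to ``$\Im(Y)$ is a graded quasi-primary submodule'' (via $Gr_{M}(\Im(Y))=Gr((\Im(Y):_{R}M))M$), not to ``keep the presentations of the $K_{i}$ aligned'' --- you should state that reduction explicitly, as the paper does with its ``it suffices to show that $(\Im(Y):_{R}M)$ is a graded quasi-primary ideal'' remark.
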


\begin{proof}
Suppose $\Im (Y)$ is a graded quasi-primary submodule of $M$. Let $%
Y\subseteq Y_{1}\cup Y_{2}$ where $Y_{1}$ and $Y_{2}$ are two closed subsets
of ($qp.Spec_{g}(M)$, $q.\tau ^{g}$). Then there exist two graded submodules
$N$ and $K$ of $M$ such that $Y_{1}=qp$-$V_{M}^{g}(N)$ and $Y_{2}=qp$-$%
V_{M}^{g}(K)$. Thus, $Y\subseteq qp$-$V_{M}^{g}(N)\cup qp$-$V_{M}^{g}(K)=qp$-%
$V_{M}^{g}(N\cap K)$ and so by Proposition \ref{P3.2}(6), $Gr(((N\cap
K):_{R}M))\subseteq Gr((\Im (Y):_{R}M))$. Since $Gr((\Im (Y):_{R}M))$ is a
graded prime ideal, either $Gr((N:_{R}M))\subseteq Gr((\Im (Y):_{R}M))$ or $%
Gr((K:_{R}M))\subseteq Gr((\Im (Y):_{R}M))$. Again by using Proposition \ref%
{P3.2}(6), either $Y\subseteq qp$-$V_{M}^{g}(N)=Y_{1}$ or $Y\subseteq qp$-$%
V_{M}^{g}(K)=Y_{2}$. Thus we conclude that $Y$ is irreducible. Conversely,
assume that $M$ is a graded multiplication module and $Y$ is an irreducible
space. By the above argument, it suffices to show that $(\Im (Y):_{R}M)$ is
a graded quasi-primary ideal of $R$. Let $ab\in (\Im (Y):_{R}M)$ for some $%
a, $ $b\in h(R)$. Suppose, on the contrary, that $Ra\nsubseteq Gr((\Im
(Y):_{R}M))$ and $Rb\nsubseteq Gr((\Im (Y):_{R}M))$. Then $%
Gr((RaM:_{R}M))\nsubseteq Gr((\Im (Y):_{R}M))$ and $Gr((RbM:_{R}M))%
\nsubseteq Gr((\Im (Y):_{R}M))$. By Proposition \ref{P3.2}(6), $Y\nsubseteq
qp$-$V_{M}^{g}(RaM)$ and $Y\nsubseteq qp$-$V_{M}^{g}(RbM)$. Let $Q\in Y$.
Then $Gr((Q:_{R}M)\supseteq Gr((\Im (Y):_{R}M))\supseteq Rab$. This means
that either $RaM\subseteq Gr((Q:_{R}M)M$ or $RbM\subseteq Gr((Q:_{R}M)M$.
So, by Proposition \ref{P3.2}(1),(3), either $qp$-$V_{M}^{g}(Q)\subseteq
(RaM)$ or $qp$-$V_{M}^{g}(Q)\subseteq (RbM)$. Therefore, $Y\subseteq qp$-$%
V_{M}^{g}(RaM)\cup qp$-$V_{M}^{g}(RbM)$ and hence $Y\subseteq qp$-$%
V_{M}^{g}(RaM)$ or $Y\subseteq qp$-$V_{M}^{g}(RbM)$ as $Y$ is irreducible.
It is a contradiction.
\end{proof}

\bigskip

The following Lemma is known, but we need it here to confirm the next theorem%
{\small .}

\begin{lemma}
\cite[Theorem 3.4]{9}\label{L4.8}. Let $R$\ be a graded $G$-ring and $Y$ be
a subset of ($Spec_{g}(R)$, $\tau _{R}^{g}$). Then, $Y$\ is irreducible
subset of ($Spec_{g}(R)$, $\tau _{R}^{g}$)\ if and only if $\Im (Y)$\ is a
graded prime ideal of $R$.
\end{lemma}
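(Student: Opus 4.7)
The plan is to prove the biconditional separately in each direction, using that the closed sets of $(Spec_{g}(R),\tau_{R}^{g})$ are exactly the sets $V_{R}^{g}(I)$ for graded ideals $I$ of $R$, and that $\Im(Y)=\bigcap_{P\in Y}P$ is automatically a graded ideal (being an intersection of graded ideals). Irreducibility of $Y$ will be pitted against graded primality of $\Im(Y)$ via the identity $V_{R}^{g}(I_{1})\cup V_{R}^{g}(I_{2})=V_{R}^{g}(I_{1}\cap I_{2})$.

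For the forward direction, I would assume that $Y$ is irreducible (so in particular $Y\neq\emptyset$, which gives properness of $\Im(Y)$ since every $P\in Y$ is proper). To verify graded primality, I pick homogeneous elements $a,b\in h(R)$ with $ab\in \Im(Y)$. Then $ab\in P$ for every $P\in Y$; since each such $P$ is a graded prime ideal and $a,b$ are homogeneous, I get $a\in P$ or $b\in P$. This translates to
\[
Y\subseteq V_{R}^{g}(Ra)\cup V_{R}^{g}(Rb).
\]
Irreducibility of $Y$ then forces $Y\subseteq V_{R}^{g}(Ra)$ or $Y\subseteq V_{R}^{g}(Rb)$, i.e. $a\in\Im(Y)$ or $b\in\Im(Y)$, as required.

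For the converse, assume $\Im(Y)$ is a graded prime ideal. Let $Y_{1},Y_{2}$ be closed subsets of $Spec_{g}(R)$ with $Y\subseteq Y_{1}\cup Y_{2}$. Write $Y_{i}=V_{R}^{g}(I_{i})$ for graded ideals $I_{1},I_{2}$. Then $Y\subseteq V_{R}^{g}(I_{1}\cap I_{2})$, so $I_{1}I_{2}\subseteq I_{1}\cap I_{2}\subseteq P$ for every $P\in Y$, giving $I_{1}I_{2}\subseteq \Im(Y)$. Since $\Im(Y)$ is graded prime and both $I_{1},I_{2}$ are graded, I would invoke the standard homogeneous argument: if $I_{1}\not\subseteq\Im(Y)$ and $I_{2}\not\subseteq\Im(Y)$ there exist homogeneous $x\in I_{1}\setminus\Im(Y)$ and homogeneous $y\in I_{2}\setminus\Im(Y)$, and then $xy\in I_{1}I_{2}\subseteq\Im(Y)$ contradicts the graded primality of $\Im(Y)$. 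Hence $I_{1}\subseteq\Im(Y)$ or $I_{2}\subseteq\Im(Y)$, which yields $Y\subseteq V_{R}^{g}(I_{1})$ or $Y\subseteq V_{R}^{g}(I_{2})$.

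The only subtle step is the one inside the converse where one passes from $I_{1}I_{2}\subseteq\Im(Y)$ with $\Im(Y)$ graded prime to $I_{1}\subseteq\Im(Y)$ or $I_{2}\subseteq\Im(Y)$; this is where gradedness of $I_{1},I_{2}$ is essential, since the definition of graded prime only tests homogeneous products. Everything else is a direct manipulation of $V_{R}^{g}(\cdot)$ and the intersection $\Im(Y)$, so I expect no further obstacle.
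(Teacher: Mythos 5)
Your proof is correct and follows essentially the same route as the paper's: both directions play irreducibility off against primality of $\Im(Y)$ through the identity $V_{R}^{g}(I)\cup V_{R}^{g}(J)=V_{R}^{g}(I\cap J)$. The only real difference is cosmetic: the paper invokes the ideal-theoretic characterization of graded prime ideals (citing Proposition 1.2 of Refai--Hailat--Obiedat) at both key steps, whereas you verify graded primality directly on homogeneous elements via the principal graded ideals $Ra$, $Rb$ in the forward direction and prove the needed ideal-product criterion inline in the converse, making the argument self-contained.
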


\begin{proof}
Suppose that $Y$\ is an irreducible subset of ($Spec_{g}(R)$, $\tau _{R}^{g}$%
). Let $I,J$\ be a graded ideal of $R$\ such that $I\cap J\subseteq \Im (Y)$%
\ and suppose that $I\nsubseteq \Im (Y)$\ and $J\nsubseteq \Im (Y),$\ Then $%
\Im (Y)\nsubseteq V_{R}^{g}(I)$\ and $\Im (Y)\nsubseteq V_{R}^{g}(J).$\ Let $%
p\in Y$, then $I\cap J\subseteq \Im (Y)\subseteq p$. So, $p\in
V_{R}^{g}(I\cap J)=V_{R}^{g}(I)\cup V_{R}^{g}(J)$. Therefore, $Y\subseteq
V_{R}^{g}(I)\cup V_{R}^{g}(J)$\ which a contradiction to the irreducibility
of $Y$. Therefore $I\subseteq \Im (Y)$\ or $J\subseteq \Im (Y)$\ that is $%
\Im (Y)$\ is graded prime ideal by \cite[Proposition 1.2]{18}. Conversely
suppose that $Y\subseteq Spec_{g}(R)$\ such that is $\Im (Y)$\ a graded
prime ideal of $R$. Suppose that $Y\subseteq Y_{1}\cup Y_{2}$, where $%
Y_{1},Y_{2}$\ are closed subset of $Spec_{g}(R),$\ so there exist $I,J$\ are
a graded ideals of $R$, such that $Y_{1}=V_{R}^{g}(I)$\ and $%
Y_{2}=V_{R}^{g}(J).$\ Hence $Y\subseteq V_{R}^{g}(I)\cup
V_{R}^{g}(J)=V_{R}^{g}(I\cap J)$. So, $I\cap J\subseteq p,\forall p\in Y$.
Thus $I\cap J\subseteq \Im (Y)$, but $\Im (Y)$\ is graded prime, so by \cite[%
Proposition 1.2]{18} we have $I\subseteq \Im (Y)$\ or $J\subseteq \Im (Y)$.
this means that either $\Im (Y)\in V_{R}^{g}(I)$\ or $\Im (Y)\in
V_{R}^{g}(J) $. So, $Y\subseteq V_{R}^{g}(I)=Y_{1}$\ or $Y\subseteq
V_{R}^{g}(J)=Y_{2}$. Therefore, $Y$\ is irreducible.
\end{proof}

\bigskip

Let $Y$ be a closed subset of a topological space $(X$, $\tau )$. An element
$y\in Y$ is said to be a generic point of $Y$ if $Y=cl(\{y\})$. Theorem \ref%
{T4.2}(1) follows that every element $Q$ of $qp.Spec_{g}(M)$ is a generic
point of the irreducible closed subset $qp$-$V_{M}^{g}(Q)$ of ($%
qp.Spec_{g}(M)$, $q.\tau ^{g}$). Note that a generic point of a closed
subset $Y$ of a topological space is unique if the topological space is a $%
T_{0}$-space (see \cite{8}).

\begin{theorem}
\label{T4.9}Let $M$ be a graded quasi-primaryful $R$-module and $Y$ be a
subset of $q.Spec_{g}(M)$. Then $Y$ is an irreducible closed subset of ($%
qp.Spec_{g}(M)$, $q.\tau ^{g}$) if and only if $Y=qp$-$V_{M}^{g}(Q)$ for
some $Q\in q.Spec_{g}(M)$. In particular every irreducible closed subset of (%
$qp.Spec_{g}(M)$, $q.\tau ^{g}$) has a generic point.\newline
\end{theorem}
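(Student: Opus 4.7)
The plan is to handle the nontrivial direction ($\Rightarrow$) by transferring the problem to $Spec_{g}(\overline{R})$ via the natural map $\varphi $, which under the graded quasi-primaryful hypothesis is continuous, surjective, and closed (Remark \ref{R3.10}, Theorem \ref{T3.11}, and Theorem \ref{T3.12}). The converse direction and the generic point statement will then fall out of Theorem \ref{T4.2}(1) essentially by inspection.

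Concretely, if $Y$ is an irreducible closed subset of $(qp.Spec_{g}(M), q.\tau ^{g})$, then $\varphi (Y)$ is irreducible in $Spec_{g}(\overline{R})$ by Remark \ref{R4.1}(2) and closed there because $\varphi $ is a closed map. Lemma \ref{L4.8} identifies $\overline{p}:=\Im (\varphi (Y))$ as a graded prime ideal of $\overline{R}$, and the standard closure formula in the Zariski topology on $Spec_{g}(\overline{R})$ (the analogue of Theorem \ref{T4.2}(1), which is immediate from the description of closed sets there) gives $\varphi (Y)=V_{\overline{R}}^{g}(\overline{p})$. In particular $\overline{p}\in \varphi (Y)$, so I can select some $Q\in Y$ with $\varphi (Q)=\overline{p}$, and I then claim $Y=qp\text{-}V_{M}^{g}(Q)$. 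One inclusion follows from Theorem \ref{T4.2}(1): $qp\text{-}V_{M}^{g}(Q)=cl(\{Q\})\subseteq Y$ since $Q\in Y$ and $Y$ is closed. For the other, any $P\in Y$ satisfies $\varphi (P)\in V_{\overline{R}}^{g}(\overline{p})$, hence $Gr((P:_{R}M))\supseteq \overline{p}=Gr((Q:_{R}M))$ (after pulling back to $R$, using $Ann(M)\subseteq (P:_{R}M)$), so $P\in qp\text{-}V_{M}^{g}(Q)$.

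The converse ($\Leftarrow$) is immediate from Theorem \ref{T4.2}(1), which writes $qp\text{-}V_{M}^{g}(Q)=cl(\{Q\})$; this is trivially closed and, being the closure of a singleton, irreducible (Theorem \ref{T4.6} also gives this directly). The ``in particular'' conclusion follows at once from the argument above, since the selected $Q$ is a generic point of $Y=cl(\{Q\})$.

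The main obstacle is producing a generic point that actually sits \emph{inside} $Y$: one must know $\overline{p}$ belongs to $\varphi (Y)$, not merely to its closure. This forces $\varphi (Y)$ to be genuinely closed in $Spec_{g}(\overline{R})$, which in turn needs $\varphi $ itself to be a closed map. That is exactly where the graded quasi-primaryful hypothesis enters; without the surjectivity of $\varphi $ one would only land inside $cl(\varphi (Y))$ and could lose any candidate generic point outside of $Y$.
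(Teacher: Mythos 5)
Your proof is correct, and its skeleton matches the paper's: both push $Y$ forward along the continuous surjection $\varphi$, invoke Lemma \ref{L4.8} to see that $\Im (\varphi (Y))$ is a graded prime ideal of $\overline{R}$, and then exhibit $Q$ with $Y=qp$-$V_{M}^{g}(Q)$, the converse and the generic-point claim being immediate from Theorem \ref{T4.6} and Theorem \ref{T4.2}(1). The one genuine divergence is how $Q$ is produced. You use the closedness of $\varphi$ (Theorem \ref{T3.12}) to get $\varphi (Y)=V_{\overline{R}}^{g}(\overline{p})$ with $\overline{p}=\Im (\varphi (Y))$, so that $\overline{p}\in \varphi (Y)$ and you may pick $Q\in Y$ lying over it, after which you verify both inclusions of $Y=qp$-$V_{M}^{g}(Q)$ directly. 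The paper instead uses only the surjectivity of $\varphi$ (Remark \ref{R3.10}) to find some $Q\in qp.Spec_{g}(M)$ --- not a priori in $Y$ --- with $Gr((Q:_{R}M))=Gr((\Im (Y):_{R}M))$, and then concludes $Y=cl(Y)=qp$-$V_{M}^{g}(\Im (Y))=qp$-$V_{M}^{g}(Q)$ from Theorem \ref{T4.2}(1) and Proposition \ref{P3.2}; the membership $Q\in Y$ falls out automatically at the end. Consequently your closing diagnosis, that the closedness of $\varphi$ is ``exactly where'' the quasi-primaryful hypothesis enters, slightly overstates matters: bare surjectivity already suffices, since the closure formula $cl(Y)=qp$-$V_{M}^{g}(\Im (Y))$ does the work of placing the generic point inside $Y$ without ever needing $\varphi (Y)$ to be closed. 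Your route costs an extra appeal to Theorem \ref{T3.12} but buys a more transparent, hands-on identification of the generic point; both arguments are valid.
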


\begin{proof}
By Theorem \ref{T4.6}, $Y=qp$-$V_{M}^{g}(Q)$ is an irreducible closed subset
of ($qp.Spec_{g}(M)$, $q.\tau ^{g}$) for some $Q\in q.Spec_{g}(M)$.
Conversely, let $Y$ be an irreducible space. Hence $\varphi (Y)=Y^{^{\prime
}}$ is an irreducible subset of ($Spec_{g}(\overline{R})$, $\tau _{\overline{%
R}}^{g}$) because $\varphi $ is continuous by Theorem \ref{T3.11}. It
follows from Lemma \ref{L4.8} that $\Im (Y^{^{\prime }})=Gr((\Im (Y):_{R}M))$
is a graded prime ideal of $\overline{R}$. Therefore $Gr((\Im (Y):_{R}M))$
is a graded prime ideal of $R$. Since the map $\varphi $ is surjective,
there exists $Q\in q.Spec_{g}(M)$ such that $Gr((Q:_{R}M))=Gr((\Im
(Y):_{R}M))$. Since $Y$ is closed, there exists a graded submodule $K$ of $M$
such that $Y=qp$-$V_{M}^{g}(K)$. It means that $Gr((\Im (qp$-$%
V_{M}^{g}(K)):_{R}M))=Gr((Q:_{R}M))$ and hence $qp$-$V_{M}^{g}(\Im (Y))=qp$-$%
V_{M}^{g}(\Im (qp$-$V_{M}^{g}(K)))=qp$-$V_{M}^{g}(Q)$ by Proposition \ref%
{P3.2}(6). Thus $Y=qp$-$V_{M}^{g}(Q)$ by Theorem \ref{T4.2}(1).\newline
\end{proof}

\bigskip

The next Corollary is an application of Theorem \ref{T4.6} and Theorem \ref%
{T4.9}.

\begin{corollary}
\label{C4.10}Let $M$ be a graded $R$-module with surjective natural map $\varphi :qp.Spec_{g}(M)\longrightarrow Spec_{g}(%
\overline{R})$\ where $\overline{R}=R/Ann(M)$, given by $\varphi (Q)=%
\overline{Gr((Q:_{R}M))}=\overline{(Gr_{M}(Q):_{R}M)}$\ for every $Q\in $\ $%
qp.Spec_{g}(M)$. Then the correspondence $qp$-$V_{M}^{g}(Q)\longrightarrow
\overline{(Gr_{M}(Q):_{R}M)}$ provides a bijection from the set of
irreducible components of ($qp.Spec_{g}(M)$, $q.\tau ^{g}$) to the set of
graded minimal prime ideals of $R$.
\end{corollary}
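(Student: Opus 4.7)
My plan is to combine Theorem~\ref{T4.9} with the order-reversing dictionary between the varieties $qp$-$V_M^g(Q)$ and the associated graded primes $\overline{Gr((Q:_R M))}$. Since $\varphi$ is assumed surjective (equivalently, $M$ is graded quasi-primaryful by Remark~\ref{R3.10}), Theorem~\ref{T4.9} identifies the irreducible closed subsets of $(qp.Spec_g(M), q.\tau^g)$ as exactly the sets $qp$-$V_M^g(Q)$ with $Q \in qp.Spec_g(M)$. The irreducible components are therefore the maximal members of this family under inclusion.

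The second step is to record that
\[
qp\text{-}V_M^g(Q_1) \subseteq qp\text{-}V_M^g(Q_2) \iff Gr((Q_2:_R M)) \subseteq Gr((Q_1:_R M));
\]
the forward direction is obtained by testing the containment at $Q = Q_1$ itself, and the reverse direction is immediate from the definition of $qp$-$V_M^g$, with equality on one side forcing equality on the other by Proposition~\ref{P3.2}(4). Consequently, $qp$-$V_M^g(Q)$ is an irreducible component exactly when $\overline{Gr((Q:_R M))}$ is minimal among those graded primes of $\overline{R}$ that occur as $\overline{Gr((Q':_R M))}$ for some $Q' \in qp.Spec_g(M)$. Surjectivity of $\varphi$ says this family is all of $Spec_g(\overline{R})$, so its minimal elements are precisely the graded minimal primes of $\overline{R}$, which under the quotient $R \to \overline{R}$ correspond to the graded minimal primes of $R$ containing $Ann(M)$ that appear in the statement.

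Finally, the assignment $qp$-$V_M^g(Q) \mapsto \overline{(Gr_M(Q):_R M)}$ is well-defined on components (by Proposition~\ref{P3.2}(4), the image depends only on the set $qp$-$V_M^g(Q)$, not the chosen representative $Q$), injective for the same reason, and its image is the full set of graded minimal primes by the analysis above. The main point of caution, rather than a genuine obstacle, is translating maximality of the closed set into minimality of the corresponding prime cleanly, and making sure the surjectivity of $\varphi$ actually furnishes a graded quasi-primary submodule $Q \in qp.Spec_g(M)$ whose radical hits every prescribed minimal prime; the latter is immediate from Remark~\ref{R3.10} once we recall that every graded prime ideal is automatically graded quasi-primary.
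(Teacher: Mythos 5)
Your proposal is correct and follows essentially the same route as the paper: both rest on Theorem~\ref{T4.9} (irreducible closed subsets are exactly the sets $qp$-$V_{M}^{g}(Q)$), the surjectivity of $\varphi$, and Proposition~\ref{P3.2}(4) for well-definedness and injectivity. Your explicit order-reversing dictionary between containment of varieties and containment of the graded radicals is just a cleaner packaging of the two separate maximality/minimality arguments the paper carries out.
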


\begin{proof}
First we show that the given correspondence is well-defined. For this, let $%
qp$-$V_{M}^{g}(Q_{1})=qp$-$V_{M}^{g}(Q_{2})$ for some $Q_{1}$, $Q_{2}\in
qp.Spec_{g}(M)$. Then $(Gr_{M}(Q_{1}):_{R}M)\supseteq (Gr_{M}(Q_{2}):_{R}M)$
and $(Gr_{M}(Q_{2}):_{R}M)\supseteq (Gr_{M}(Q_{1}):_{R}M)$. Thus, we have $%
(Gr_{M}(Q_{1}):_{R}M)=(Gr_{M}(Q_{2}):_{R}M)$. Moreover, if $qp$-$V_{M}^{g}(Q)$
is an irreducible component, $q=(Gr_{M}(Q):_{R}M)$, and $p\subseteq q$ for
some $p\in Spec_{g}(R)$, then by the surjectivity of $\varphi $, there is $%
P\in qp.Spec_{g}(M)$ such that $(Gr_{M}(P):_{R}M)=p$. It follows that $qp$-$%
V_{M}^{g}(Q)\subseteq qp$-$V_{M}^{g}(P)$. Since, by Theorem \ref{T4.6}, $qp$-%
$V_{M}^{g}(P)$ is irreducible, we have $qp$-$V_{M}^{g}(Q)=qp$-$V_{M}^{g}(P)$%
, which implies that $q=p$. Thus $p$ is a graded minimal prime ideal of $R$.
For the surjectivity of the correspondence, assume that $p$ is a graded
minimal prime ideal of $R$. Then, since $\varphi $ is surjective, there
exists $Q\in qp.Spec_{g}(M)$ such that $\overline{(Gr_{M}(Q):_{R}M)}=p$.
Moreover, by Theorem \ref{T4.6}, $qp$-$V_{M}^{g}(Q)$ is irreducible. Now let
$qp$-$V_{M}^{g}(Q)\subseteq Y$ for some irreducible subset $Y$ of ($%
qp.Spec_{g}(M)$, $q.\tau ^{g}$). Without loss of generality, we may assume
that $Y$ is closed, since the closure of an irreducible subset of ($%
qp.Spec_{g}(M)$, $q.\tau ^{g}$) is irreducible. Thus, by Theorem \ref{T4.9},
there exists $P\in qp.Spec_{g}(M)$ such that $Y=qp$-$V_{M}^{g}(P)$. It
follows that $p=$ $\overline{(Gr_{M}(Q):_{R}M)}\supseteq $ $\overline{%
(Gr_{M}(P):_{R}M)}$, and so, by the minimality of $p$, we have $p=$ $%
\overline{(Gr_{M}(Q):_{R}M)}=$ $\overline{(Gr_{M}(P):_{R}M)}$. Hence $qp$-$%
V_{M}^{g}(Q)=qp$-$V_{M}^{g}(P)=Y$. This means that $qp$-$V_{M}^{g}(Q)$ is an
irreducible component of ($qp.Spec_{g}(M)$, $q.\tau ^{g}$).
\end{proof}

\bigskip

\begin{theorem}
\label{T4.11}Let $M$ be a graded quasi-primaryful $R$-module. The set of all
irreducible components of ($qp.Spec_{g}(M)$, $q.\tau ^{g}$) is of the form $%
\Phi =\{qp$-$V_{M}^{g}(Gr(q)M)\mid q\in qp$-$V_{R}^{g}(Ann(M))$ and $Gr(q)$
is a minimal element of $V_{R}^{g}(Ann(M))$ with respect to inclusion$\}$.
\end{theorem}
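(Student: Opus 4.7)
The plan is to reduce this to Corollary \ref{C4.10}, which already provides a bijection between the irreducible components of $(qp.Spec_{g}(M), q.\tau^{g})$ and the graded minimal prime ideals of $\overline{R}$, that is, the graded primes of $R$ minimal over $Ann(M)$. Observe that the parameter $q$ enters the description of $\Phi$ only through $Gr(q)$, and by definition $Gr(q)$ is prime and ranges (as $q$ varies over graded quasi-primary ideals with $Gr(q)$ minimal in $V_{R}^{g}(Ann(M))$) exactly over the graded minimal primes of $R$ over $Ann(M)$ (taking $q=p$ itself, which is a graded quasi-primary ideal with $Gr(p)=p$, shows every such minimal prime occurs). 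So $\Phi$ is set-theoretically indexed by these minimal primes, and it suffices to prove that under the Corollary \ref{C4.10} bijection, the component assigned to a minimal prime $p$ over $Ann(M)$ is $qp$-$V_{M}^{g}(pM)$.

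For this, fix a graded minimal prime $p$ over $Ann(M)$. Since $M$ is graded quasi-primaryful, the natural map $\varphi$ is surjective by Remark \ref{R3.10}, so we may pick $Q\in qp.Spec_{g}(M)$ with $Gr((Q:_{R}M))=p$. By Corollary \ref{C4.10} the irreducible component corresponding to $p$ is $qp$-$V_{M}^{g}(Q)$, and I will show $qp$-$V_{M}^{g}(Q)=qp$-$V_{M}^{g}(pM)$.

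The key step is the identity $(pM:_{R}M)=p$. The inclusion $p\subseteq (pM:_{R}M)$ is immediate. For the reverse, by \cite[Theorem 3.5]{1} one has $(Gr_{M}(Q):_{R}M)=Gr((Q:_{R}M))=p$, so
\[
pM=(Gr_{M}(Q):_{R}M)M\subseteq Gr_{M}(Q),
\]
and taking residuals gives $(pM:_{R}M)\subseteq (Gr_{M}(Q):_{R}M)=p$. Thus $Gr((pM:_{R}M))=p=Gr((Q:_{R}M))$ (in particular $pM$ is a proper graded submodule), and Proposition \ref{P3.2}(4) yields $qp$-$V_{M}^{g}(pM)=qp$-$V_{M}^{g}(Q)$, as required.

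Putting these together, every irreducible component of $(qp.Spec_{g}(M), q.\tau^{g})$ has the form $qp$-$V_{M}^{g}(pM)=qp$-$V_{M}^{g}(Gr(p)M)$ for a graded minimal prime $p$ over $Ann(M)$, hence lies in $\Phi$; and conversely each element $qp$-$V_{M}^{g}(Gr(q)M)$ of $\Phi$ arises in this way by setting $p=Gr(q)$ and applying the same argument. The main obstacle is the middle computation $(pM:_{R}M)=p$, which is where both quasi-primaryfulness (to produce $Q$ with $Gr((Q:_{R}M))=p$) and \cite[Theorem 3.5]{1} (to pass from $(Q:_{R}M)$ to $(Gr_{M}(Q):_{R}M)$) enter in an essential way; once this identity is in hand the rest is bookkeeping via Proposition \ref{P3.2}(4) and Corollary \ref{C4.10}.
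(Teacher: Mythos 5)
Your proof is correct, but it is organized differently from the paper's. The paper argues directly from Theorem \ref{T4.9}: it writes an arbitrary irreducible component as $qp$-$V_{M}^{g}(Q)$, converts this to $qp$-$V_{M}^{g}(Gr((Q:_{R}M))M)$ via Proposition \ref{P3.2}(3), and then proves by hand the equivalence between maximality of the component and minimality of $Gr((Q:_{R}M))$ in $V_{R}^{g}(Ann(M))$, using surjectivity of $\varphi$ in both directions. You instead outsource that entire maximality/minimality correspondence to the bijection of Corollary \ref{C4.10}, which reduces the theorem to the single identity $qp$-$V_{M}^{g}(Q)=qp$-$V_{M}^{g}(pM)$ for $Q$ with $Gr((Q:_{R}M))=p$; this is a cleaner decomposition and makes the logical dependence on Corollary \ref{C4.10} explicit. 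The one place where you do more work than necessary is the computation $(pM:_{R}M)=p$: Proposition \ref{P3.2}(3) applied to $K=Q$ already gives $qp$-$V_{M}^{g}(Q)=qp$-$V_{M}^{g}(Gr((Q:_{R}M))M)=qp$-$V_{M}^{g}(pM)$ in one line, which is exactly how the paper gets this identity. Your longer route through $(Gr_{M}(Q):_{R}M)=Gr((Q:_{R}M))$ and Proposition \ref{P3.2}(4) is nevertheless valid, and it has the small side benefit of recording that $pM$ is proper with $Gr((pM:_{R}M))=p$. In short: same essential ingredients (Theorem \ref{T4.9} lurks inside Corollary \ref{C4.10}), but a genuinely different and somewhat more economical packaging.
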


\begin{proof}
Suppose $Y$ is an irreducible component of ($qp.Spec_{g}(M)$, $q.\tau ^{g}$%
). By Theorem \ref{T4.9}, $Y=qp$-$V_{M}^{g}(Q)$ for some $Q\in q.Spec_{g}(M)$%
. Hence, $Y=qp$-$V_{M}^{g}(Q)=qp$-$V_{M}^{g}(Gr((Q:_{R}M))M)$ by Proposition %
\ref{P3.2}(3). Let $q=(Q:_{R}M)$. Now, it suffices to show that $Gr(q)$ is a
minimal element of $V_{R}^{g}(Ann(M))$ with respect to inclusion. To see
this let $q^{^{\prime }}\in V_{R}^{g}(Ann(M))$ and $q^{^{\prime }}\subseteq
Gr(q)$. Then there exists an element $Q^{^{\prime }}\in q.Spec_{g}(M)$ such
that $Gr((Q^{^{\prime }}:_{R}M))=q^{^{\prime }}$ because $M$ is graded
quasi-primaryful. So, $Y=qp$-$V_{M}^{g}(Q)\subseteq qp$-$V_{M}^{g}(Q^{^{%
\prime }})$. Hence, $Y=qp$-$V_{M}^{g}(Q)=qp$-$V_{M}^{g}(Q^{^{\prime }})$ due
to the maximality of $qp$-$V_{M}^{g}(Q)$. It implies that $Gr(q)=q^{^{\prime
}}$.

Conversely, let $Y\in \Phi $. Then there exists $q\in qp$-$%
V_{R}^{g}(Ann(M))$ such that $Gr(q)$ is a minimal element in $%
V_{R}^{g}(Ann(M))$ and $Y=qp$-$V_{M}^{g}(Gr(q)M)$. Since $M$ is graded
quasi-primaryful, there exists an \ element $Q\in q.Spec_{g}(M)$ such that $%
Gr((Q:_{R}M))=Gr(q)$. So, $Y=qp$-$V_{M}^{g}(Gr(q)M)=qp$-$%
V_{M}^{g}(Gr((Q:_{R}M))M)=qp$-$V_{M}^{g}(Q)$, and so $Y$ is irreducible by
Theorem \ref{T4.9}. Suppose that $Y=qp$-$V_{M}^{g}(Q)\subseteq qp$-$%
V_{M}^{g}(Q^{^{\prime }})$, where $Q^{^{\prime }}\in q.Spec_{g}(M)$. Since $%
Q\in qp$-$V_{M}^{g}(Q^{^{\prime }})$ and $Gr(q)$ is minimal, it follows that
$Gr((Q:_{R}M))=Gr((Q^{^{\prime }}:_{R}M))$. Now, by Proposition \ref{P3.2}%
(3), we have $Y=qp$-$V_{M}^{g}(Q)=qp$-$V_{M}^{g}(Gr((Q:_{R}M))M)=qp$-$%
V_{M}^{g}(Gr((Q^{^{\prime }}:_{R}M))M)=qp$-$V_{M}^{g}(Q^{^{\prime }})$.
\end{proof}

\bigskip

A topological space $(X$, $\tau )$ is said to be Noetherian if the open
subsets of $X$ satisfy the ascending chain condition. Since closed subsets
are complements of open subsets, it comes to the same thing to say that the
closed subsets of $(X$, $\tau )$ satisfy the descending chain condition.%
\newline
Let $(X$, $\tau )$ be a Noetherian topological space. Then every subspace of
$(X$, $\tau )$ is compact. In particular, $(X$, $\tau )$ is compact (see
\cite{12}).

\begin{definition}
\label{D4.12}Let $M$ be a graded $R$-module. The graded Zariski quasi
primary radical of a graded submodule $K$ of $M$, denoted by $Zqp$-$%
Gr_{M}(K) $, is the intersection of all members of $qp$-$V_{M}^{g}(K)$ for ($%
qp.Spec_{g}(M)$, $q.\tau ^{g}$), that is, $Zqp$-$Gr_{M}(K)=\bigcap%
\limits_{Q\in qp\text{-}V_{M}^{g}(K)}Q=\bigcap \{Q\in qp.Spec_{g}(M)\mid
Gr((Q:_{R}M))\supseteq Gr((K:_{R}M))\}$. We say a graded submodule $K$ is a $%
Z_{qp}$-radical submodule if $K=Zqp$-$Gr_{M}(K)$.
\end{definition}

\bigskip

We present the next important Theorem. It is one of the main results of this
paper.

\begin{theorem}
\label{T4.13}Let $M$ be a graded $R$-module. Then, ($q.Spec_{g}(M)$, $q.\tau
_{M}^{g}$) is a Noetherian topological space (and so is quasi-compact) if
and only if the $ACC$ for the graded Zariski quasi primary radical
submodules of $M$ holds.
\end{theorem}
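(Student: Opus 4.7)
The plan is to establish an order-reversing bijection between the closed subsets of $(qp.Spec_{g}(M), q.\tau_{M}^{g})$ and the graded $Z_{qp}$-radical submodules of $M$, and then translate the descending chain condition on closed subsets into the ascending chain condition on $Z_{qp}$-radical submodules.

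First I would verify that the assignment $C \mapsto \Im(C)$ from closed subsets to graded submodules has image exactly the set of $Z_{qp}$-radical submodules, with inverse $K \mapsto qp\text{-}V_{M}^{g}(K)$. Every closed subset has the form $C = qp\text{-}V_{M}^{g}(K)$ for some graded submodule $K$, and by Theorem \ref{T4.2}(1) we also have $qp\text{-}V_{M}^{g}(K) = qp\text{-}V_{M}^{g}(\Im(qp\text{-}V_{M}^{g}(K))) = qp\text{-}V_{M}^{g}(Zqp\text{-}Gr_{M}(K))$; in particular $\Im(qp\text{-}V_{M}^{g}(K)) = Zqp\text{-}Gr_{M}(K)$, so $\Im(C)$ is always a $Z_{qp}$-radical submodule. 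When $K$ is already $Z_{qp}$-radical, i.e., $K = Zqp\text{-}Gr_{M}(K) = \Im(qp\text{-}V_{M}^{g}(K))$, applying $\Im \circ qp\text{-}V_{M}^{g}$ recovers $K$; in the reverse order, $qp\text{-}V_{M}^{g}(\Im(C)) = C$ is again Theorem \ref{T4.2}(1). Thus the assignment is a bijection.

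Next I would observe that this bijection reverses inclusions: by Proposition \ref{P3.2}(1), if $K_{1} \subseteq K_{2}$ then $qp\text{-}V_{M}^{g}(K_{1}) \supseteq qp\text{-}V_{M}^{g}(K_{2})$, and $C_{1} \subseteq C_{2}$ obviously implies $\Im(C_{1}) \supseteq \Im(C_{2})$. Consequently, a strictly descending chain of closed subsets of $(qp.Spec_{g}(M), q.\tau_{M}^{g})$ corresponds to a strictly ascending chain of $Z_{qp}$-radical submodules of $M$, and conversely. Therefore the topological space is Noetherian (i.e., its closed sets satisfy DCC) if and only if the $Z_{qp}$-radical submodules of $M$ satisfy ACC. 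The quasi-compactness statement then follows from the standard fact that every Noetherian topological space is quasi-compact.

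The main obstacle is verifying that the operator $K \mapsto Zqp\text{-}Gr_{M}(K)$ on submodules is genuinely idempotent, so that the bijection is well-defined on the $Z_{qp}$-radical submodules; this hinges on the identity $qp\text{-}V_{M}^{g}(Zqp\text{-}Gr_{M}(K)) = qp\text{-}V_{M}^{g}(K)$, which in turn follows from Theorem \ref{T4.2}(1) applied to $Y = qp\text{-}V_{M}^{g}(K)$. Once this identity is in hand, the remainder of the argument is essentially formal bookkeeping.
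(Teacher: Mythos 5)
Your proposal is correct and is essentially the paper's own argument: the paper also translates descending chains of closed sets into ascending chains of $Z_{qp}$-radical submodules (and back) using Theorem \ref{T4.2}(1) and the identity $\Im(qp\text{-}V_{M}^{g}(K))=Zqp\text{-}Gr_{M}(K)$, merely working chain-by-chain instead of first packaging the correspondence as an explicit order-reversing bijection. The bijective formulation is a clean way to organize the same two facts, so nothing further is needed.
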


\begin{proof}
Suppose the $ACC$ for the graded Zariski quasi primary radical submodules of
$M$. Let $qp$-$V_{M}^{g}(K_{1})\supseteq qp$-$V_{M}^{g}(K_{2})\supseteq $%
...\ be a descending chain of closed sets $qp$-$V_{M}^{g}(K_{i})$ of ($%
q.Spec_{g}(M)$, $q.\tau _{M}^{g}$), where $K_{i}$ is a graded submodule of $%
M $. Then $\Im (qp$-$V_{M}^{g}(K_{1}))=Zqp$-$Gr_{M}(K_{1})\subseteq \Im (qp$-%
$V_{M}^{g}(K_{2}))=Zqp$-$Gr_{M}(K_{2})\subseteq $...\ is an ascending chain
of graded Zariski quasi primary radical submodules of $M$. So, by
assumption, there exists $n\in
\mathbb{N}
$ such that for all $i\in
\mathbb{N}
$, $Zqp$-$Gr_{M}(K_{n})=Zqp$-$Gr_{M}(K_{n+i})$. Now, by Theorem \ref{T4.2}%
(1), $qp$-$V_{M}^{g}(K_{n})=qp$-$V_{M}^{g}(Zqp$-$Gr_{M}(K_{n}))=qp$-$%
V_{M}^{g}(K_{n+i})=qp$-$V_{M}^{g}(Zqp$-$Gr_{M}(K_{n+i}))$. Thus ($%
q.Spec_{g}(M)$, $q.\tau _{M}^{g}$) is a Noetherian topological space.
Conversely, suppose that ($q.Spec_{g}(M)$, $q.\tau _{M}^{g}$) is a
Noetherian topological space. Let $K_{1}\subseteq K_{2}\subseteq $...\ be an
ascending chain of a graded quasi primary radical submodules of $M$. Thus $%
qp $-$V_{M}^{g}(K_{1})\supseteq qp$-$V_{M}^{g}(K_{2})\supseteq $...\ be a
descending chain of closed sets $qp$-$V_{M}^{g}(K_{i})$ of ($q.Spec_{g}(M)$,
$q.\tau _{M}^{g}$). By assumption there is $n\in
\mathbb{N}
$ such that for all $i\in
\mathbb{N}
$, $qp$-$V_{M}^{g}(K_{n})=qp$-$V_{M}^{g}(K_{n+i})$. Therefore, $K_{n}=Zqp$-$%
Gr_{M}(K_{n})=\Im (qp$-$V_{M}^{g}(K_{n}))=\Im (qp$-$V_{M}^{g}(K_{n+i}))=Zqp$-%
$Gr_{M}(K_{n+i})=K_{n+i}$. Therefore the $ACC$ for the graded Zariski quasi
primary radical submodules of $M$ holds.
\end{proof}

\bigskip
Recall that every Noetherian topological space has only finitely many
irreducible components, (see \cite{**}).

As a consequence of Theorem \ref{T4.13}, we have the following Theorem.

\begin{theorem}
\label{T4.14} Let $M$ be a graded quasi-primaryful
$R$-module. If ($q.Spec_{g}(M)$, $q.\tau _{M}^{g}$) is a Noetherian space,
then we have the following:\newline
(1) If $\varphi $ is injective, then every ascending chain of
graded quasi primary submodules of $M$ is stationary.\newline
(2) $R$ has finitely many graded minimal prime ideals.
\end{theorem}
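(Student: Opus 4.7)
The plan is to exploit two standard consequences of Noetherianness of $(q.Spec_{g}(M),q.\tau_{M}^{g})$: the descending chain condition on closed subsets (already repackaged as an ACC on graded Zariski quasi-primary radical submodules in Theorem \ref{T4.13}), together with the general topological fact, recorded immediately before the statement, that a Noetherian space has only finitely many irreducible components. For both parts I also use that $M$ being graded quasi-primaryful makes the natural map $\varphi$ surjective (Remark \ref{R3.10}); this is what activates Corollary \ref{C4.10} in part (2).

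For part (1), I would begin with an arbitrary ascending chain $K_{1}\subseteq K_{2}\subseteq \cdots$ of graded quasi-primary submodules of $M$ lying in $qp.Spec_{g}(M)$ and apply the order-reversing operator $qp$-$V_{M}^{g}(\cdot)$ from Proposition \ref{P3.2}(1) to produce a descending chain of closed sets in $(qp.Spec_{g}(M),q.\tau^{g})$. Noetherianness forces stabilization at some index $n$, so $qp$-$V_{M}^{g}(K_{n})=qp$-$V_{M}^{g}(K_{n+i})$ for every $i\geq 0$. Because both $K_{n}$ and $K_{n+i}$ lie in $qp.Spec_{g}(M)$, the converse clause of Proposition \ref{P3.2}(4) now supplies $Gr((K_{n}:_{R}M))=Gr((K_{n+i}:_{R}M))$, that is, $\varphi(K_{n})=\varphi(K_{n+i})$. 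Injectivity of $\varphi$ finally forces $K_{n}=K_{n+i}$, giving the desired stationarity.

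For part (2), I would appeal to the general fact that every Noetherian space has only finitely many irreducible components, so $(qp.Spec_{g}(M),q.\tau^{g})$ has only finitely many. Since $M$ is graded quasi-primaryful, Remark \ref{R3.10} delivers surjectivity of $\varphi$, which is precisely the hypothesis needed to invoke Corollary \ref{C4.10}. That corollary provides a bijection between the irreducible components of $(qp.Spec_{g}(M),q.\tau^{g})$ and the graded minimal prime ideals of $R$, so finiteness of the former transfers to finiteness of the latter.

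The argument is essentially a harvest of machinery already developed in the paper, so I do not anticipate any serious obstacle. The only subtle point is ensuring that both $K_{n}$ and $K_{n+i}$ really lie in $qp.Spec_{g}(M)$ when invoking the converse direction of Proposition \ref{P3.2}(4) in part (1); this is automatic because the chain was assumed from the outset to consist of graded quasi-primary submodules satisfying the primeful property.
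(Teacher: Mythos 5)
Your proposal is correct and follows essentially the same route as the paper: both parts convert the ascending chain into a descending chain of closed sets, use Noetherianness to stabilize, and then recover equality of the submodules from injectivity of $\varphi$ (you unpack this via Proposition \ref{P3.2}(4), which is exactly the content of the equivalence in Theorem \ref{T3.7} that the paper cites directly), while part (2) is the identical appeal to finiteness of irreducible components plus Corollary \ref{C4.10}.
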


\begin{proof}
 (1) Let $K_{1}\subseteq K_{2}\subseteq $\textperiodcentered
\textperiodcentered \textperiodcentered\ be an ascending chain of graded
quasi primary submodules of $M$. Then $qp$-$V_{M}^{g}(K_{1})\supseteq qp$-$%
V_{M}^{g}(K_{2})\supseteq $\textperiodcentered \textperiodcentered
\textperiodcentered\ is a descending chain of closed subsets of ($%
q.Spec_{g}(M)$, $q.\tau _{M}^{g}$), which is stationary by assumption. There
exists an integer $n\in
\mathbb{N}
$ such that $qp$-$V_{M}^{g}(K_{n})=qp$-$V_{M}^{g}(K_{n+i})$ for each $i\in
\mathbb{N}
$. By Theorem \ref{T3.7}, we have $K_{n}=K_{n+i}$ for
each $i\in
\mathbb{N}
$. This completes the proof.\newline
(2) Since every Noetherian topological space has finitely many irreducible
components, the result follows from Corollary \ref{C4.10}.
\end{proof}

\bigskip

Spectral spaces have been characterized by Hochster \cite[p.52. Ptoposition 4%
]{8} as the topological spaces $X$ which satisfy the following conditions:%
\newline
(1) $X$ is a $T_{0}$-space.\newline
(2) $X$ is quasi-compact.\newline
(3) The quasi-compact open subsets of $X$ are closed under finite
intersection and form an open base.\newline
(4) Each irreducible closed subset of $X$ has a generic point.

\bigskip

The following theorem is one of the main result of this article. In
particular, we observe ($q.Spec_{g}(M)$, $q.\tau _{M}^{g}$) from the point
of view of spectral topological spaces.

\begin{theorem}
\label{T4.15}Let $M$ be a graded $R$-module and
the map $\psi ^{q}$ be surjective. Then the following statements are
equivalent:\newline
(1) ($q.Spec_{g}(M)$, $q.\tau _{M}^{g}$) is a spectral space.\newline
(2) ($q.Spec_{g}(M)$, $q.\tau _{M}^{g}$) is a $T_{0}$-space.\newline
(3) $\varphi $ is injective.\newline
(4) If $qp$-$V_{M}^{g}(P)=qp$-$V_{M}^{g}(Q)$, then $P=Q$, for any $P$, $Q\in
q.Spec_{g}(M)$.\newline
(5) $\mid q.Spec_{g}(M)\mid \leq 1$ for every $q\in V^{q}(Ann(M))$ with $%
Gr(q)=p$.\newline
(6) $\varphi $ is a graded $R$-homeomorphism.
\end{theorem}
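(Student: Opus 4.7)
The plan is to reduce everything to one substantial implication, namely $(2)\Rightarrow(1)$, and handle the rest of the cycle by direct appeals to earlier results. A preliminary observation used throughout: since $\psi^{q}$ is surjective by hypothesis and $\phi^{R}$ is surjective by Proposition \ref{P3.4}, the natural map $\varphi=\phi^{R}\circ\psi^{q}$ is surjective, so by Remark \ref{R3.10} the module $M$ is graded quasi-primaryful. This single fact unlocks both Theorem \ref{T4.9} (generic points for irreducible closed sets) and Corollary \ref{T3.13} (the homeomorphism criterion).

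With this preparation, $(2)\Leftrightarrow(3)\Leftrightarrow(4)$ are immediate from Theorem \ref{T4.3}. Condition $(5)$ is a reformulation of $(3)$: since $\varphi$ is surjective, every $p\in Spec_{g}(\overline{R})$ is of the form $\overline{Gr(q)}$ for some graded quasi-primary $q\in qp\text{-}V_{R}^{g}(Ann(M))$, so the fibre-size condition $|qp.Spec_{g}^{p}(M)|\le 1$ reads as in $(5)$. For $(3)\Leftrightarrow(6)$, the surjectivity of $\varphi$ reduces injectivity to bijectivity, and Corollary \ref{T3.13} then gives the equivalence with being a graded $R$-homeomorphism. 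The direction $(1)\Rightarrow(2)$ is immediate since every spectral space is a $T_{0}$-space.

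The real work is $(2)\Rightarrow(1)$, for which I would verify Hochster's four axioms in order. The $T_{0}$ axiom is hypothesis $(2)$. Quasi-compactness of $(qp.Spec_{g}(M),q.\tau^{g})$ is Theorem \ref{T3.16}, which relies on $\psi^{q}$ being surjective. That the quasi-compact open sets form a base closed under finite intersection is Theorem \ref{T3.17}, again under the same hypothesis. Finally, every irreducible closed subset has a generic point by Theorem \ref{T4.9}, whose graded quasi-primaryful hypothesis is precisely the preliminary observation. The main obstacle I anticipate is bookkeeping: checking that $(5)$ really says the same thing as Theorem \ref{T4.3}(3) once one unwinds the passage from $q$ to $p=Gr(q)$; beyond that, the hypothesis that $\psi^{q}$ is surjective is the common thread that aligns Theorems \ref{T3.16}, \ref{T3.17} and \ref{T4.9} with Hochster's criterion, so no new arguments should be needed.
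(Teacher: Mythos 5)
Your proposal is correct and follows essentially the same route as the paper: $(2)\Leftrightarrow(3)\Leftrightarrow(4)\Leftrightarrow(5)$ via Theorem \ref{T4.3}, $(3)\Leftrightarrow(6)$ via Corollary \ref{T3.13}, and $(2)\Rightarrow(1)$ by checking Hochster's axioms through Theorems \ref{T3.16}, \ref{T3.17} and \ref{T4.9}. Your preliminary observation that surjectivity of $\psi^{q}$ forces $\varphi$ to be surjective, hence $M$ graded quasi-primaryful, usefully makes explicit the hypothesis-matching for Theorem \ref{T4.9} and Corollary \ref{T3.13} that the paper leaves implicit.
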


\begin{proof}
 (1)$\Rightarrow $(2) Is trivial\newline
(2)$\Rightarrow $(1) Holds by combining Theorem \ref{T3.16},
Theorem \ref{T3.17} and\ Theorem \ref{T4.9}.\newline
(2)$\Leftrightarrow $(3)$\Leftrightarrow $(4)$\Leftrightarrow $(5) Follows
by Theorem \ref{T4.3}.\newline
(3)$\Leftrightarrow $(6) By Corollary \ref{T3.13}.\newline
\end{proof}


\bigskip\bigskip\bigskip\bigskip


\begin{thebibliography} {10}

\bibitem{1} K. Al-Zoubi, and M. Al-Dolat, On graded primary-like submodules of graded modules over graded commutative rings, Proyecciones, to appear, https://doi.org/10.22199/issn.0717-6279-3467

\bibitem{2} K. Al-Zoubi and R. Alkhalaf. On graded quasi-primary submodules of graded modules over graded commutative rings, Bol. Soc. Parana. Mat. 39 (4) (2021), 57-64.

\bibitem{3} K. Al-Zoubi and M. Jaradat, The zariski topology on the graded
classical prime spectrum of a graded module over a graded commutative ring.
Matematicki Vesnik, 70(4) (2018), 303-313.

\bibitem{4} K. Al-Zoubi and F. Qarqaz, An intersection condition for graded
prime submodules in gr-multiplication modules, Math. Reports, 20(3), 2018,
329-336.

\bibitem{**} N. Bourbaki. Commutative Algebra: Chapters 1-7. Vol. 1. Springer
Science \& Business Media, 1998.

\bibitem{5} A. Y. Darani. Topology on $Spec_{g}(M),$ Buletinul Academiei De
Stiinte, \textbf{3} (67) (2011), 45-53.

\bibitem{6} J. Escoriza and B. Torrecillas, Multiplication Objects in
commutative grothendieck cate-gories, Comm. in Algebra, 26 (6), 1867-1883,
(1998).

\bibitem{7} R. Hazrat, Graded rings and graded grothendieck groups,
Cambridge University Press, Cambridge, 2016.

\bibitem{8} M. Hochster, Prime ideal structure in commutative rings, Trans.
Amer. Math. Soc, 137(1969), 43-60.

\bibitem{9} M. Jaradat, The zariski topology on $Cl.Spec_{g}(M)$\ as a
spectral space, (submitted).

\bibitem{10} M. Jaradat, The Graded classical prime spectrum with The
zariski topology as a noetherian topological space, Iran. J. Math. Sci. Inform. (2020), (accepted).

\bibitem{11} C.P. Lu, The zariski topology on the prime spectrum of a
module, Houston J. Math., 25 (1998), 417-432.

\bibitem{12} J. R. Munkres, Topology; a first course, Prentice-Hall, Inc.
Eaglewood cliffs, New Tersey, 1975.

\bibitem{*} J. M. M\o ller. "General topology." Matematisk Institut,
Universitetsparken 5 (2007).

\bibitem{13} C. Nastasescu and V.F. Oystaeyen, Graded and filtered rings and
modules. Lecture Notes in Mathematics Vol. 758, Springer-Verlag, Berlin,
2004.

\bibitem{14} C. Nastasescu, F. Van Oystaeyen, Graded ring theory,
Mathematical Library 28, North Holand, Amsterdam, 1982.

\bibitem{15} C. Nastasescu and V.F. Oystaeyen, Methods of Graded Rings,
Lecture Notes in Math., Vol. 1836. Berlin-Heidelberg: Springer-Verlag, 2004.

\bibitem{16} K.H. Oral, U. Tekir and A.G. Agargun, On Graded prime and
primary submodules, Turk. J. Math., 35 (2011), 159-167.

\bibitem{17} N.A. Ozkirisci , K.H. Oral, U. Tekir, Graded prime spectrum of
a graded module, Iran. J. Sci.Technol., 37A3 (2013), 411-420.

\bibitem{18} M. Refai, M. Hailat and S. Obiedat, Graded radicals on graded
prime spectra, Far East J. of Math. Sci., part I (2000), 59-73.

\bibitem{19} M. Refai, On properties of $G$-spec($R$), Sci. Math. Jpn. 53
(2001), no. 3 ,411-415.

\bibitem{20} M. Samiei  and H.F. Moghimi, Quasi-Zariski topology on the
quasi-primary spectrum of a module. Jordan J. Math. Stat. 10(4) (2017), 319-345.

 \end{thebibliography}
\end{document}